\documentclass[11pt,reqno]{amsart}
\usepackage{amssymb,latexsym}
\usepackage{amsmath,color}
\usepackage{amsthm}
\usepackage{epsfig}
\usepackage{graphicx}
\usepackage{titletoc}
\numberwithin{equation}{section}

\topmargin=-0.1in \oddsidemargin3mm \evensidemargin3mm
\textheight240mm \textwidth160mm
\newtheorem{theorem}{Theorem}[section]
\newtheorem{proposition}[theorem]{Proposition}
\newtheorem{lemma}[theorem]{Lemma}
\newtheorem{corollary}[theorem]{Corollary}

\theoremstyle{definition}

\newtheorem{remark}{Remark}

\def\e{\varepsilon}

\def\u{u_{\lambda_m}}

\newcommand{\ds}{\displaystyle}
\newcommand{\eps}{\varepsilon}

\newcommand{\wmr}{{\rho_0}}
\newcommand{\ueps}{{u}_{\varepsilon}}

\newcommand{\meps}{\rho_{\varepsilon}}
\newcommand{\bigu}{{\psi}_{\varepsilon}}

\newcommand{\dx}{\mathrm{d}x}

\newcommand{\dr}{\mathrm{d}r}
\newcommand{\dss}{\mathrm{d}s}

\newcommand{\hnu}{\pmb{\mathrm{J}}(u_0)}
\newcommand{\jnu}{{\pmb{\mathrm{J}^{*}}}(u_0,\Psi^R(d_0))}
\newcommand{\wjnu}{{\pmb{{\mathrm{J}^{**}}}}(u_0,\Psi^R(d_0))}

\begin{document}
	\title[Singularly perturbed non-local semi-linear problem]{Boundary-layer profile of a singularly perturbed non-local semi-linear problem arising in chemotaxis }
	
	\author{Chiun-Chang Lee}
	\address{Chiun-Chang Lee,~Institute for Computational and Modeling Science, National Tsing Hua University, Hsinchu 30013, Taiwan}
	\email{chlee@mail.nd.nthu.edu.tw}
	\author{Zhian Wang}
	\address{ Zhian Wang,~Department of Applied Mathematics, Hong Kong Polytechnic University, Hung Hom, Kowloon, Hong Kong}
	\email{mawza@polyu.edu.hk}
	\author{ Wen Yang}
	\address{ Wen Yang,~Wuhan Institute of Physics and Mathematics, Chinese Academy of Sciences, P.O. Box 71010, Wuhan 430071, P. R. China}
	\email{wyang@wipm.ac.cn}
	
	\date{}
	
	\maketitle
	
	\begin{abstract}
		This paper is concerned with the following singularly perturbed non-local semi-linear problem
		\begin{equation}\label{h}\tag{$\ast$}
		\begin{cases}
		\varepsilon^2 \Delta u=\dfrac{m}{\int_{\Omega}e^{u}{\dx}}u e^u\quad &\mathrm{in}~\Omega,\\
		u= u_0~&\mathrm{on}~\partial\Omega,
		\end{cases}
		\end{equation}
		which corresponds to the stationary problem of a chemotaxis system describing the aerobic bacterial movement, where $\Omega$ is a smooth bounded domain in $\mathbb{R}^N (N\geq 1)$, $\eps, m$ and $u_0$ are positive constants.  We show that the problem \eqref{h} admits a unique classical solution which is of boundary-layer profile as $\varepsilon \to 0$, where the boundary-layer thickness is of order $\varepsilon$. When $\Omega=B_R(0)$ is a ball with radius $R>0$, we find a refined asymptotic boundary layer profile up to the first-order expansion of $\varepsilon$ by which we find that the slope of the layer profile in the immediate vicinity  of the boundary  decreases with respect to (w.r.t.) the  curvature while the boundary-layer thickness increases {w.r.t.} the curvature.
	\end{abstract}

	\section{introduction}
	Aerobic bacteria often live in thin fluid layers near the air-water interface where the dynamics of bacterial chemotaxis, oxygen diffusion and consumption can be encapsuled in the following mathematical model (see \cite{Tuval})
	\begin{equation}
	\label{1n.sys}
	\begin{cases}
	v_t+\vec{w} \cdot \nabla u=\Delta v-\nabla\cdot  (v\nabla u)\quad&\mathrm{in}~\Omega,\\
	u_t+\vec{w} \cdot \nabla u=D\Delta u-uv&\mathrm{in}~\Omega,\\
	\rho(\vec{w}_t+\vec{w}\nabla \vec{w})+\nabla p=\mu \Delta \vec{w}-v \nabla \phi &\mathrm{in}~\Omega,\\
	\nabla \cdot \vec{w}=0,
	\end{cases}
	\end{equation}
	where $\Omega$ is a smooth bounded domain in $\mathbb{R}^n (n\geq 1)$, $v(x,t)$ and $u(x,t)$ denote the concentration of bacteria and oxygen, respectively, and $\vec{w}$ is the velocity field of a fluid flow governed by the incompressible Navier-Stokes equations with density $\rho$, pressure $p$ and viscosity $\mu$, where $\nabla \phi=V_b g(\rho_b-\rho){\bf z}$ describes the gravitational force exerted by bacteria onto the fluid along the upward unit vector ${\bf z}$  proportional to the bacterial volume $V_b$, the gravitational constant $g$ and the bacterial density $\rho_b$; $D$ is the diffusion rate of oxygen. The system \eqref{1n.sys} describes the chemotactic  movement of bacteria towards the concentration of oxygen which is saturated with a constant $u_0$ at the air-water interface (boundary of $\Omega$) and will be absorbed (consumed) by the bacteria, where both bacteria and oxygen diffuses and are convected with the fluid. Therefore the physical boundary conditions as employed in \cite{Tuval} is the zero-flux boundary condition on $v$ and Dirichlet boundary condition on $u$ as well as no-slip boundary condition on $\vec{w}$, namely
	\begin{equation}\label{bc1}
	\partial_{\nu}v-v\partial_{\nu}u=0,~u=u_0, \ \vec{w}=0\quad \mathrm{on}~\partial\Omega
	\end{equation}
	where $u_0$ is a positive constant accounting for the saturation of oxygen at the air-water interface and $\nu$ denotes the unit outward normal vector to the boundary $\partial\Omega$. The model \eqref{1n.sys}-\eqref{bc1} has been successfully used in \cite{Tuval} to numerically recover the (accumulation) boundary layer phenomenon observed in the water drop experiment reported  in \cite{Tuval}. Later more extensive numerical studies were performed in \cite{CFKLM, LeeKim} for the model \eqref{1n.sys} in a chamber. Analytic study of \eqref{1n.sys} on the water-drop shaped domain as in \cite{Tuval} with physical boundary condition \eqref{bc1} was started with \cite{Lorz1} where the local existence of weak solutions was proved.  Recent works \cite{Peng-Xiang1, Peng-Xiang2} obtained the global well-posedness of a variant of \eqref{1n.sys} in a 3D cylinder with mixed boundary conditions under some additional conditions on the consumption rate. The above-mentioned  appear to the only analytical results of \eqref{1n.sys} with physical boundary conditions \eqref{bc1} in the literature. In the meanwhile, there are many results on the unbounded whole space $\mathbb{R}^N (N\geq 2)$ or bounded domain with Neumann boundary conditions on both $v$ and $u$ as well as no-slip boundary condition on $\vec{w}$ (see earliest works \cite{Duan, Liu, Winkler}).
	
	It should be emphasized that most important finding of the experiment performed in \cite{Tuval} was the boundary-layer formation by bacteria near the air-water interface. Therefore an analytical question is naturally to exploit whether the model \eqref{1n.sys}-\eqref{bc1} will have boundary-layer solutions relevant to the experiment of \cite{Tuval}.  Except some numerical studies recalled above, rigorous analysis on the boundary-layer formation of the model \eqref{1n.sys}-\eqref{bc1} with physical boundary conditions \eqref{bc1} seems unavailable in the literature as far as we know. The purpose of this paper is to make some progress towards the understanding of boundary layer solutions of the concerned system. As the first step we consider a simplified fluid-free aero-taxis model \eqref{1n.sys}-\eqref{bc1} with physical boundary conditions
	\begin{equation}
	\label{1.sys}
	\begin{cases}
	v_t=\Delta v-\nabla\cdot ( v\nabla u)\quad&\mathrm{in}~\Omega,\\
	u_t=D\Delta u-uv&\mathrm{in}~\Omega,\\
	\partial_{\nu}v-v\partial_{\nu}u=0,~u=u_0\quad &\mathrm{on}~\partial\Omega,
	\end{cases}
	\end{equation}
	which resembles a consumption-type chemotaxis system initially appeared in \cite{KS}.
	Even for the simplified system \eqref{1.sys},  due to the lack of effective mathematical tools handling chemotaxis  systems with non-homogeneous Dirichlet boundary conditions, the global well-posedness of \eqref{1.sys} still remains an open question.  When the boundary conditions are changed to Neumann boundary conditions $\partial_\nu v|_{\partial \Omega}=\partial_\nu u|_{\partial \Omega}=0$, some results on the global well-posedness and large-time behavior of solutions to \eqref{1n.sys} have been developed in \cite{FJ, Tao1, Tao2}.
	In this paper, we shall study the boundary layer solutions of the stationary problem of \eqref{1.sys}. Integrating the first equation of \eqref{1.sys} in space with zero-flux boundary condition directly, we find that the bacterial mass is preserved in time, namely
	\begin{equation*}\label{mass}
	\int_\Omega v(x,t)\dx=\int_\Omega v(x,0)\dx:=m
	\end{equation*}
	where $m>0$ denotes the initial bacterial mass.  Therefore the stationary problem of \eqref{1.sys} reads as
	\begin{equation}
	\label{1.steady}
	\begin{cases}
	\Delta v -\nabla\cdot(v\nabla u)=0&\mathrm{in}~\Omega,\\
	D\Delta u-uv=0&\mathrm{in}~\Omega,\\
	\partial_{\nu}v-v\partial_{\nu}u=0,~u=u_0\quad &\mathrm{on}~\partial\Omega,\\
	\int_\Omega v(x)\dx=m.
	\end{cases}
	\end{equation}
	Note the first equation of \eqref{1.steady} can be written as
	$
	\nabla\cdot(v\nabla(\log v-u))=0.
	$
	Then multiplying both sides of this equation by $\log v-u$ and using the zero-flux boundary condition, we find that any solution of \eqref{1.steady} verifies the equation
	\begin{equation*}
	\int_\Omega v|\nabla (\log v-u)|^2\dx=0,
	\end{equation*}
	which gives $v=\lambda e^u$ for some positive constant $\lambda$. Since
	$
	m=\int_{\Omega} v(x)\dx,
	$
	we get $\lambda=\frac{m}{\int_{\Omega}e^u{\dx}}$. Therefore the problem \eqref{1.steady} is equivalent to the following nonlocal semilinear elliptic Dirichlet problem
	\begin{equation}
	\label{1.model}
	\begin{cases}
	\varepsilon^2 \Delta u=\dfrac{m}{\int_{\Omega}e^{u}{\dx}}u e^u\quad &\mathrm{in}~\Omega,\\
	u= u_0~&\mathrm{on}~\partial\Omega,
	\end{cases}
	\end{equation}
	with
	\begin{equation}\label{vu}
	v=\frac{m}{\int_{\Omega}e^u{\dx}} e^u,
	\end{equation}
	where for convenience we have assumed $D=\varepsilon^2$ for $\varepsilon>0$.
	
	The purpose of this paper is threefold: (i) prove the existence and uniqueness of classical solutions of \eqref{1.steady} for any $\eps>0$; (ii) justify that the unique solution obtained in (i) has a boundary-layer profile as $\varepsilon \to 0 $; (iii) find the refined asymptotic structure of boundary-layer profile near the boundary and explore how the (boundary) curvature affects the boundary-layer profile like the steepness and thickness.  The result (i) confirms that the system \eqref{1.sys} has pattern formation, and result (ii) shows that the pattern solution is of a boundary-layer profile as $\varepsilon \to 0$ which roughly provides a theoretical explanation of the accumulation boundary-layer at the water-air interface observed in the experiment of \cite{Tuval}. The result (iii) further elucidates why the boundary layer thickness varies at the air-water interface of water drop with different curvatures observed in the experiment of \cite{Tuval}.
	
	The major difficulty in exploring the above three questions lies in the non-local term in  \eqref{1.steady}.
	To prove the result (i), we first show that the existence of solutions to the nonlocal problem \eqref{1.steady} can be provided by an auxiliary  (local) problem for which we use the monotone iteration scheme along with elliptic regularity theory to get the existence, and then show the uniqueness of \eqref{1.steady}  directly. The boundary-layer profile as $\eps \to 0$ in a general domain $\Omega$ as described in (ii) is justified by the \textcolor{black}{Fermi coordinates (see \cite{dkw} for more background of Fermi coordinates) and the barrier method.} The non-locality in \eqref{1.model} does not seems to bring much troubles for the first two results. It, however, brings considerable difficulties to our third question (iii) concerning the effect of boundary curvature on the boundary-layer profile.  In order to explore the question (iii), we have to have a good understanding of the asymptotic structure of the non-local coefficient $\int_\Omega e^{u_\eps} {\dx}$ which, however, depends on the asymptotic profile of $u$ itself.  Moreover, we have to make the asymptotic expansion as precise as possible so that the role of curvature can be explicitly observed.  This makes the problem very tricky and challenging. With this non-locality, we are unable to gain the necessary understanding of the solution-dependent nonlocal coefficient $\int_\Omega e^udx$ in a general domain $\Omega$. Fortunately when the domain is a ball, we manage to derive {the required} estimates on this nonlocal term and find the refined asymptotic profile of boundary-layer solutions as $\varepsilon \to0$ involving the (boundary) curvature whose role on the boundary-layer steepness and thickness can be explicitly revealed.
	
	Finally, we mention some other  results comparable to the current work. \textcolor{black}{When the nonlinear term $ue^u$ is replaced by the double well type function, including the Allen-Cahn type nonlinearity, the boundary expansion (up to the 2nd order) of the Neumann derivative for the case without the non-local term was obtained by Shibata in \cite{Sh2003,Sh2004}.} While if the first equation of \eqref{1.sys} was replaced by the $v_t=\Delta v- \nabla\cdot ( v\nabla \ln u)$, namely the chemotactic sensitivity is logarithmic, and the Dirichlet boundary condition for $v$ and Robin boundary condition for $u$ are prescribed, the boundary-layer solution of time-dependent problem has been studied in a series works \cite{hwz,hw1,hw2} where the boundary-layer appears in the gradient of $u$ other than $u$ itself.  Very recently, when the boundary condition for $u$ is changed to another physical boundary condition $\partial_\nu u=(\gamma-u(x)) g(x)$ where $\gamma \geq 0$ denotes the maximal saturation of oxygen in the fluid and $g(x)$ is the absorption rate of the gaseous oxygen into the fluid, the following stationary problem corresponding to \eqref{1.sys} with $D=1$
	\begin{equation*}
	\label{1n.model}
	\begin{cases}
	\Delta u=\sigma u e^u\quad &\mathrm{in}~\Omega,\\
	\partial_\nu u=(\gamma-u(x)) g(x)~&\mathrm{on}~\partial\Omega
	\end{cases}
	\end{equation*}
	was considered in \cite{Braukhoff} and the existence of non-constant classical solutions was established, where $\sigma>0$ is a constant. Clearly the nonlocal elliptic problem \eqref{1.model} is very different from the problems mentioned above, and more importantly we focus on the question whether the nonlocal problem \eqref{1.model} admit boundary-layer solutions relevant to the experimental observation in \cite{Tuval}.
	
	The rest of paper is organized as follows. In section 2, we shall {state} the main results on the existence of non-constant classical solutions of \eqref{1.model} (see Theorem \ref{th1.1}), the existence of boundary layer solution as $\eps \to 0$ (see Theorem \ref{th1.2}) and refined asymptotic profile of boundary layer solutions as $\eps$ is small (see Theorem \ref{mainthm}). In section 3, we prove Theorem \ref{th1.1}. In section 4, we prove Theorem \ref{th1.2}. Finally, Theorem \ref{mainthm} is proved in section 5.

	\section{Statement of the main results}
	We shall first prove the existence of a unique solution to \eqref{1.model} and then pass the results to the original steady state problem \eqref{1.steady}. Furthermore, we can show the solution of \eqref{1.steady} is non-degenerate, i.e., the associated linearized problem only admits a trivial (zero) solution. The results are stated in the following theorem.
	\begin{theorem}
		\label{th1.1}
		Let $\Omega$ be a bounded smooth domain in $\mathbb{R}^N(N\geq 1)$ with smooth boundary, and let $m$ and $u_0$ be given positive constants independent of $\varepsilon$. Then, for $\varepsilon>0,$ equation \eqref{1.model} admits an unique classical solution $u\in\mathrm{C}^1(\overline{\Omega})\cap\mathrm{C}^{\infty}(\Omega)$, and hence  the elliptic system \eqref{1.steady} admits a unique solution which is non-degenerate.
	\end{theorem}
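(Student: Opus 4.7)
The plan is to remove the non-locality by introducing a parameter-dependent local auxiliary problem, solve it for every admissible value of the parameter, and then reduce \eqref{1.model} to a strictly monotone scalar equation in that parameter.

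\emph{Step 1 (local auxiliary problem).} For each $\lambda>0$ I would first study
\begin{equation*}
\varepsilon^{2}\Delta u=\lambda u e^{u}\quad\text{in }\Omega,\qquad u=u_{0}\text{ on }\partial\Omega.
\end{equation*}
Here $u\equiv u_{0}$ is a supersolution and $u\equiv 0$ is a subsolution, and $f(u)=\lambda u e^{u}$ is smooth and monotone increasing on $[0,u_{0}]$. A standard monotone iteration (after absorbing a sufficiently large linear term to make the iteration operator coercive) together with $L^{p}$ and Schauder theory yields a classical solution $u_{\lambda}\in C^{1}(\overline{\Omega})\cap C^{\infty}(\Omega)$ satisfying $0<u_{\lambda}<u_{0}$ in $\Omega$. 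Uniqueness at this stage follows from the maximum principle applied to the linear equation $\varepsilon^{2}\Delta w-\lambda c(x)w=0$ satisfied by the difference $w$ of any two solutions with $c(x)>0$.

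\emph{Step 2 (reduction and strict monotonicity, the main obstacle).} A comparison argument (using that $u_{\lambda_{2}}$ is a strict subsolution of the local problem with parameter $\lambda_{1}<\lambda_{2}$) shows $\lambda\mapsto u_{\lambda}$ is strictly decreasing, so \eqref{1.model} is equivalent to finding $\lambda>0$ with
\begin{equation*}
\Phi(\lambda):=\lambda\int_{\Omega}e^{u_{\lambda}}\,\dx=m.
\end{equation*}
Since $u_{\lambda}\to u_{0}$ as $\lambda\to 0^{+}$ and $u_{\lambda}\to 0$ uniformly on compact subsets of $\Omega$ as $\lambda\to\infty$, one has $\Phi(0^{+})=0$ and $\Phi(+\infty)=+\infty$, and $\Phi$ is continuous, so existence follows from the intermediate value theorem. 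The hard part is strict monotonicity of $\Phi$, which I expect to be the main obstacle. Setting $v_{\lambda}=\partial_{\lambda}u_{\lambda}$ (well-defined and smooth in $\lambda$ by the implicit function theorem), differentiation yields
\begin{equation*}
\varepsilon^{2}\Delta v_{\lambda}-\lambda(1+u_{\lambda})e^{u_{\lambda}}v_{\lambda}=u_{\lambda}e^{u_{\lambda}},\qquad v_{\lambda}|_{\partial\Omega}=0.
\end{equation*}
The key step is to compare $w_{\lambda}:=1+\lambda v_{\lambda}$ with zero: a direct calculation gives $Lw_{\lambda}=-\lambda e^{u_{\lambda}}<0$ where $L:=\varepsilon^{2}\Delta-\lambda(1+u_{\lambda})e^{u_{\lambda}}$ has a strictly positive zero-order coefficient, while $w_{\lambda}|_{\partial\Omega}=1>0$. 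The maximum principle for $L$ then forces $w_{\lambda}>0$ in $\Omega$, hence
\begin{equation*}
\Phi'(\lambda)=\int_{\Omega}e^{u_{\lambda}}(1+\lambda v_{\lambda})\,\dx>0,
\end{equation*}
so $\lambda^{*}$ solving $\Phi(\lambda^{*})=m$ is unique; combined with the local uniqueness from Step~1 this gives the unique classical solution of \eqref{1.model}.

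\emph{Step 3 (equivalence with \eqref{1.steady} and non-degeneracy).} Setting $v=\lambda^{*}e^{u}$ with $\lambda^{*}=m/\int_{\Omega}e^{u}\,\dx$ one checks \eqref{1.steady} directly, and the converse implication was already carried out in the Introduction, so the two problems are equivalent. For non-degeneracy, the linearization of \eqref{1.model} at $u$ applied to $\phi$ with $\phi|_{\partial\Omega}=0$ reads
\begin{equation*}
L\phi=-\frac{(\lambda^{*})^{2}}{m}\,u\,e^{u}\int_{\Omega}e^{u}\phi\,\dx,
\end{equation*}
with the same $L$ as in Step~2. Setting $a=\int_{\Omega}e^{u}\phi\,\dx$ and using that $v_{\lambda^{*}}$ is the unique solution of $Lv_{\lambda^{*}}=u e^{u}$ with zero boundary data, one obtains $\phi=-(\lambda^{*})^{2}a\,v_{\lambda^{*}}/m$; substituting back yields $a\,\Phi'(\lambda^{*})/\int_{\Omega}e^{u}\,\dx=0$, and the positivity $\Phi'(\lambda^{*})>0$ from Step~2 forces $a=0$ and hence $\phi\equiv 0$, which is the non-degeneracy of the solution of \eqref{1.steady}.
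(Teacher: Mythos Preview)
Your proof is correct and shares the same opening with the paper---the auxiliary local problem $\varepsilon^{2}\Delta u_{\lambda}=\lambda u_{\lambda}e^{u_{\lambda}}$ solved by monotone iteration, followed by the intermediate value theorem for $\Phi(\lambda)=\lambda\int_{\Omega}e^{u_{\lambda}}\,\dx$---but from that point on the two arguments diverge. For uniqueness the paper never differentiates in $\lambda$: it supposes two solutions $v_{1},v_{2}$ of the nonlocal equation itself, shows by a maximum-principle argument at the minimum of $v_{1}-v_{2}$ that they must be ordered, and then derives a contradiction at the maximum of $v_{1}-v_{2}$. For non-degeneracy the paper linearizes the \emph{system} \eqref{1.steady} in both unknowns $(\phi,\psi)$, reduces via the relation $\phi=(\psi+E)v$ to a single equation in $\psi$, and again argues pointwise at an interior extremum. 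Your route is more unified: the single inequality $\Phi'(\lambda)>0$, obtained through the neat auxiliary function $w_{\lambda}=1+\lambda\,\partial_{\lambda}u_{\lambda}$ satisfying $Lw_{\lambda}=-\lambda e^{u_{\lambda}}<0$, simultaneously yields uniqueness of $\lambda^{*}$ and triviality of the linearized kernel. The paper's approach, in turn, is more elementary in that it needs only continuity of $\lambda\mapsto u_{\lambda}$ (which it proves by a direct comparison estimate) rather than $C^{1}$-dependence via the implicit function theorem. One small point to tighten: you establish non-degeneracy for the scalar equation \eqref{1.model}, whereas the theorem states it for the system \eqref{1.steady}; the two linearizations coincide once one uses $\phi=(\psi+E)v$ and the mass constraint $\int_{\Omega}\phi\,\dx=0$ to identify $E=-\frac{\lambda^{*}}{m}\int_{\Omega}e^{u}\psi\,\dx$, but a sentence making this reduction explicit would fully close the argument.
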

	

	Our second result on the problem \eqref{1.model} is the explicit behavior of $u$ near the boundary $\partial\Omega$ when $\varepsilon$ is small. Before statement, we introduce the following notations. Let $\Omega_{\delta}$ be defined by
	\begin{equation*}
	\label{1.def-inner}
	\Omega_{\delta}=\left\{p\in\Omega\mid\mathrm{dist}(p,\partial\Omega)
	>\delta\right\}
	\end{equation*}
	as illustrated in Fig.\ref{fig1}. For example, when $n=1$ and $\Omega=(-1,1)$, then
	$
	\Omega_{\delta}=(-1+\delta,1-\delta).
	$
	When $n=2,$ and $\Omega=B_R(0)$, then $\Omega_{\delta}=B_{R-\delta}(0)$.
	
	
	\begin{figure}[h]
		\includegraphics[width=7cm]{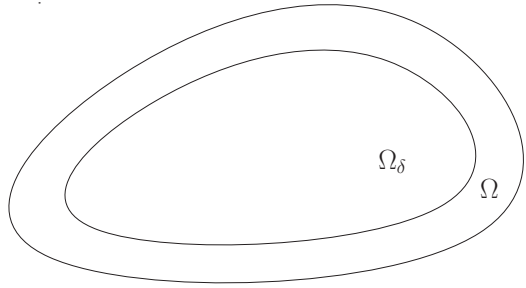}
		\caption{An illustration of $\Omega_\delta$ in $\Omega$.}
		\label{fig1}
	\end{figure}
	
	In the following, we shall give some description on the solution of \eqref{1.model} in general domain as $\e\to0$. Without loss of generality, we may assume $0\in\Omega$ throughout the paper and set
	\begin{equation*}
	\Omega^{\e}=\left\{y\mid \e y\in \Omega\right\}.
	\end{equation*}
	To find the leading order term for the solution of \eqref{1.model}, we define $U_{\e}(y)$ to be the solution of the following equation
	\begin{equation}
	\label{1.def-u1}
	\begin{cases}
	\Delta_y U_{\e}=\frac{m}{|\Omega|}U_{\e} e^{U_{\e}}~&\mbox{in}~\Omega^\e,\\
	U_{\e}=u_0~&\mbox{on}~\partial\Omega^\e.
	\end{cases}
	\end{equation}
	The second result of this paper is as follows
	\begin{theorem}
		\label{th1.2}
		Let $\Omega$ be a bounded domain with smooth boundary. Then there is some non-negative constant $\delta(\e)$ satisfying
		$$\delta(\e)\to0\quad\mathrm{and}\quad \e/\delta(\e)\to0~\mathrm{as}~\e\to 0,$$
		and the unique solution $u_\e$ obtained in Theorem \ref{th1.1} has the following property:
		\begin{equation}
		\label{1.th2-1}
		\lim_{\e\to0}\|u_\e\|_{L^\infty\left(\overline{\Omega_{\delta(\e)}}\right)}=0,
		\end{equation}
		and
		\begin{equation}
		\label{1.th2-2}
		\|u_\e(x)-U_{\e}(x/\e)\|_{L^\infty(\Omega)}=O(\e).
		\end{equation}
	\end{theorem}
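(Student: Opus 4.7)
The plan is to bracket $u_\e$ between two solutions of auxiliary local problems, then use Fermi coordinates with a one-dimensional barrier to get the interior decay, and finally to quantify the defect $\lambda_\e-m/|\Omega|$ in order to recover the $O(\e)$ rate.

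\textbf{Step 1 (a priori bounds and sandwich).} By the maximum principle, the constant $u_0$ is a supersolution and $0$ is a subsolution of \eqref{1.model}, so $0\le u_\e\le u_0$ in $\Omega$. In particular $|\Omega|\le \int_\Omega e^{u_\e}\,\dx\le e^{u_0}|\Omega|$, hence the nonlocal coefficient
\[
\lambda_\e:=\frac{m}{\int_\Omega e^{u_\e}\,\dx}\in\Bigl[\frac{m\,e^{-u_0}}{|\Omega|},\frac{m}{|\Omega|}\Bigr].
\]
For each fixed $\lambda>0$, let $w_{\lambda,\e}$ be the unique positive solution of the local Dirichlet problem $\e^2\Delta w=\lambda we^w$ in $\Omega$, $w=u_0$ on $\partial\Omega$ (existence and uniqueness follow exactly as in Theorem~\ref{th1.1} since the nonlocal coefficient is now a constant). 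Because $s\mapsto se^s$ is strictly increasing on $[0,\infty)$, the map $\lambda\mapsto w_{\lambda,\e}$ is monotone decreasing: comparing two such solutions and applying the maximum principle to their difference eliminates any interior positive maximum. Combined with $\lambda_\e\le m/|\Omega|$, this yields
\[
U_\e(x/\e)=w_{m/|\Omega|,\e}(x)\le u_\e(x)\le w_{\lambda_\e,\e}(x)=u_\e(x),
\]
and also the upper sandwich $u_\e\le w_{\lambda_*,\e}$ with $\lambda_*=me^{-u_0}/|\Omega|$.

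\textbf{Step 2 (interior smallness \eqref{1.th2-1}).} Use the Fermi coordinates $(t,\theta)$ in a tubular neighborhood of $\partial\Omega$, where $t=\mathrm{dist}(x,\partial\Omega)$. For any $\lambda>0$, the half-line profile $V_\lambda(s)$ solving $V_\lambda''=\lambda V_\lambda e^{V_\lambda}$, $V_\lambda(0)=u_0$, $V_\lambda(\infty)=0$, exists, is monotone decreasing, and decays exponentially $V_\lambda(s)\le Ce^{-c_\lambda s}$ (linearizing near $0$ gives rate $c_\lambda=\sqrt{\lambda}$). Constructing a global supersolution of the form $\overline{w}(x)=V_\lambda\bigl(t(x)/\e\bigr)+\e\,\eta(x)$ in the tubular neighborhood and a small constant in its complement, where $\eta$ absorbs the curvature-type terms coming from the Laplacian in Fermi coordinates, the barrier method (applied uniformly over the compact range $\lambda\in[\lambda_*,m/|\Omega|]$) gives
\[
0\le w_{\lambda,\e}(x)\le C\,e^{-c\,t(x)/\e}\qquad\text{in }\Omega,
\]
for constants independent of $\e$ and $\lambda$. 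This exponential decay, together with the sandwich of Step~1, gives \eqref{1.th2-1} for any $\delta(\e)$ with $\delta(\e)/\e\to\infty$ and $\delta(\e)\to0$.

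\textbf{Step 3 (quantifying the nonlocal defect and proving \eqref{1.th2-2}).} Using the exponential bound from Step~2 and the co-area/Fermi decomposition $\dx\sim dt\,d\sigma_\theta$,
\[
\int_\Omega\bigl(e^{u_\e}-1\bigr)\dx\le e^{u_0}\int_\Omega u_\e\,\dx\le C\int_0^{\mathrm{diam}(\Omega)}e^{-ct/\e}\,dt\cdot|\partial\Omega|=O(\e),
\]
hence
\[
\lambda_\e-\frac{m}{|\Omega|}=\frac{m}{|\Omega|}\cdot\frac{|\Omega|-\int_\Omega e^{u_\e}\,\dx}{\int_\Omega e^{u_\e}\,\dx}=O(\e).
\]
Finally, set $\phi:=u_\e-U_\e(\cdot/\e)\ge 0$. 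Writing $\e^2\Delta\phi=\lambda_\e u_\e e^{u_\e}-\tfrac{m}{|\Omega|}U_\e e^{U_\e}$, a mean-value expansion yields
\[
\e^2\Delta\phi-\frac{m}{|\Omega|}(1+\xi)e^{\xi}\phi=\Bigl(\lambda_\e-\frac{m}{|\Omega|}\Bigr)u_\e e^{u_\e},
\]
for some $\xi$ between $U_\e(\cdot/\e)$ and $u_\e$. The zeroth-order coefficient $\tfrac{m}{|\Omega|}(1+\xi)e^\xi$ is bounded below by $m/|\Omega|>0$, and the right-hand side is $O(\e)$ in $L^\infty$. Since $\phi=0$ on $\partial\Omega$, the quantitative maximum principle (evaluating at an interior maximum) gives $\|\phi\|_{L^\infty(\Omega)}\le \tfrac{|\Omega|}{m}\bigl|\lambda_\e-\tfrac{m}{|\Omega|}\bigr|\,u_0e^{u_0}=O(\e)$, which is \eqref{1.th2-2}.

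\textbf{Main obstacle.} The principal difficulty is Step~2: one must construct a barrier in Fermi coordinates that is valid uniformly in the parameter $\lambda$ over the whole compact interval $[\lambda_*,m/|\Omega|]$, and that correctly handles the curvature terms generated by $\Delta$ in these coordinates, in order to extract exponential decay with an explicit rate. This exponential decay is what drives the sharp $O(\e)$ estimate on the nonlocal coefficient in Step~3; without it, one would only obtain a weaker rate $O(\delta(\e))$, which is insufficient for \eqref{1.th2-2}.
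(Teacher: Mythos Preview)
Your proposal is correct and follows the same four-step skeleton as the paper: sandwich $u_\e$ by comparison, extract exponential interior decay via barriers in Fermi coordinates, use that decay to show the nonlocal defect $\lambda_\e-m/|\Omega|=O(\e)$, and finish with the maximum principle on $\phi=u_\e-U_\e(\cdot/\e)$. Steps~3 and~4 are essentially identical to the paper's.

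The one noteworthy difference is in Steps~1--2. You compare $u_\e$ with solutions $w_{\lambda,\e}$ of the \emph{nonlinear} local problems $\e^2\Delta w=\lambda we^w$ and then build a barrier out of the half-line profile $V_\lambda$ of that same nonlinear ODE. The paper instead observes that the equation can be rewritten as $\e^2\Delta u_\e=c_\e(x)\,u_\e$ with $c_\e(x):=m\,e^{u_\e}/\!\int_\Omega e^{u_\e}$ trapped between two fixed positive constants, and then compares directly with solutions of the \emph{linear} equation $\e^2\Delta V=d^2V$ (their Lemma~3.1). This buys a cleaner barrier construction: the supersolution is a pure exponential $e^{-d_1t/\e}$ (times a cutoff), and no correction term $\e\eta$ is needed to absorb curvature, since the curvature contribution $-\e H d_1$ is dominated by the gap $d_1^2-d^2$ for $d_1>d$. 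Your nonlinear route works too, but the linear shortcut is worth knowing.
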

	
	In the next result we shall derive that the  boundary-layer thickness is of order $\e.$ Specifically, we have
	
	\begin{corollary}
		\label{th1.3}
		Let $u_\e(x)$ be the solution of \eqref{1.model} and $x_{in}$ be any interior point such that the distance
		from $x_{in}$ to the boundary is of order $\ell_\e$, namely $\mathrm{dist}(x_{in},\partial\Omega) \sim \ell_\e$. Under the same hypothesis as in Theorem \ref{th1.2}, as $0<\e\ll1$, we have:
		\begin{enumerate}
			\item [(1)] if $\lim_{\e\to0}\frac{\ell_\e}{\e}=0$, then $\lim_{\e\to0}u_\e(x_{in})=u_0$;
			\item [(2)] if $\lim_{\e\to0}\frac{\ell_\e}{\e}=L$ with $L\in(0,\infty)$, then $\lim_{\e\to0}u_\e(x_{in})\in(0,u_0)$;
			\item [(3)] if $\lim_{\e\to0}\frac{\ell_\e}{\e}=+\infty$, then
			$\lim_{\e\to0}u_\e(x_{in})=0.$
		\end{enumerate}
	\end{corollary}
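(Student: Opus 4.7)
\medskip
\noindent\textit{Proof proposal.} By Theorem \ref{th1.2}, $u_\e(x_{in})=U_\e(y_{in})+O(\e)$ with $y_{in}:=x_{in}/\e$ and $\mathrm{dist}(y_{in},\partial\Omega^\e)\sim\ell_\e/\e$, so the task reduces to describing $U_\e(y_{in})$ on the dilated domain $\Omega^\e$, whose boundary has curvature $O(\e)$ and locally flattens as $\e\to 0$. Writing $c:=m/|\Omega|$, the maximum principle (using that $u\mapsto cue^u$ is increasing on $[0,\infty)$) gives $0\leq U_\e\leq u_0$, and boundary Schauder estimates applied in unit-radius balls centred on points of $\partial\Omega^\e$ (after flattening the boundary there via a diffeomorphism close to the identity in $C^{2,\alpha}$) yield $\e$-uniform $C^{2,\alpha}$ bounds on $U_\e$.

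For Case (1), pick $\bar y_\e\in\partial\Omega^\e$ nearest to $y_{in}$; the uniform Lipschitz bound together with $U_\e(\bar y_\e)=u_0$ gives $|U_\e(y_{in})-u_0|\leq C|y_{in}-\bar y_\e|=C\ell_\e/\e\to 0$. For Case (3), I construct the global supersolution $\phi_\e(y):=u_0\exp\bigl(-\alpha\,\mathrm{dist}(y,\partial\Omega^\e)\bigr)$ with $\alpha^2\geq 2ce^{u_0}$. Off the cut locus (a null set in a smooth bounded domain), $|\nabla\,\mathrm{dist}|=1$ and $|\Delta\,\mathrm{dist}|=O(\e)$, so a direct computation gives $\Delta\phi_\e\geq(\alpha^2/2)\phi_\e\geq c\phi_\e e^{\phi_\e}$ once $\e$ is small; a smooth approximation of the distance function makes $\phi_\e$ a genuine supersolution to which the comparison principle applies, so $U_\e(y_{in})\leq u_0 e^{-\alpha\ell_\e/\e}\to 0$.

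Case (2) is the core of the argument. Let $\bar y_\e$ be the point of $\partial\Omega^\e$ nearest to $y_{in}$; introduce Fermi coordinates at $\bar y_\e$ so that $y_{in}$ has normal coordinate $\ell_\e/\e\to L$, and set $\tilde U_\e(z):=U_\e(\bar y_\e+z)$. The translated domains exhaust the half-space $\{z_N>0\}$, and by Arzela--Ascoli together with the uniform $C^{2,\alpha}$ bounds, $\tilde U_\e\to U^*$ in $C^2_{\mathrm{loc}}$ along subsequences, where $U^*$ satisfies $\Delta U^*=cU^*e^{U^*}$ on $\mathbb{R}^N_+$, $U^*|_{z_N=0}=u_0$, and (from the Case (3) bound $\tilde U_\e(z)\leq u_0 e^{-\alpha z_N}$ passed to the limit) $U^*\to 0$ as $z_N\to\infty$. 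A moving-plane argument in the tangential variables forces $U^*=U^*(z_N)$, and the ODE $(U^*)''=cU^*e^{U^*}$ with $U^*(0)=u_0$, $U^*(\infty)=0$ has a unique decreasing solution as seen from the first integral
\[
\tfrac{1}{2}(U^{*\prime})^2 = c\bigl[(U^*-1)e^{U^*}+1\bigr],
\]
which determines $U^*$ by separation of variables. Uniqueness upgrades subsequential convergence to full convergence, so $U_\e(y_{in})\to U^*(L)\in(0,u_0)$.

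The principal obstacle is Case (2): one must (i) establish that the blow-up limit $U^*$ is genuinely one-dimensional, via a moving-plane argument whose start-up needs precisely the decay at infinity furnished by Case (3); (ii) secure uniqueness of the half-space profile, here provided by the ODE first integral; and (iii) verify that the $\e$-uniform $C^{2,\alpha}$ regularity survives the rescaling, which is exactly where the curvature bound $O(\e)$ on $\partial\Omega^\e$ enters decisively. Case (3) is technically lighter but still requires some care at the cut locus of $\mathrm{dist}(\cdot,\partial\Omega^\e)$.
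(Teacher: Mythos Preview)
Your overall strategy is sound, and for Case~(2) it actually goes further than the paper: the paper is content to sandwich $u_\e(x_{in})$ between two crude exponential barriers (the lower bound from \eqref{3.2-5} and an upper bound $u_0e^{-b_9\,\mathrm{dist}/\e}$ obtained by comparison with the linear problem $\e^2\Delta V=L_1^2V$), concluding only that any subsequential limit lies in $(0,u_0)$. You instead run a blow-up/compactness argument and identify the limit as the value $U^*(L)$ of the unique half-line profile, which is strictly stronger and also confirms that the full limit exists. Both routes are legitimate; yours is heavier (moving planes, classification in $\mathbb{R}^N_+$) but more informative, while the paper's barrier argument is quicker and needs no half-space analysis.

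There is, however, a genuine sign error in your Case~(3) barrier, and it propagates into the decay input you use in Case~(2). You want an \emph{upper} barrier $\phi_\e\geq U_\e$; for the equation $\Delta u=cue^u$ with $s\mapsto cse^s$ increasing on $[0,\infty)$, this requires $\Delta\phi_\e\leq c\phi_\e e^{\phi_\e}$. Your computation correctly gives $\Delta\phi_\e\approx\alpha^2\phi_\e$, but the choice $\alpha^2\geq 2ce^{u_0}$ then forces $\Delta\phi_\e\geq c\phi_\e e^{\phi_\e}$, which makes $\phi_\e$ a \emph{sub}solution and hence a \emph{lower} bound---useless for showing $U_\e(y_{in})\to 0$. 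The fix is immediate: take $\alpha>0$ small (any $\alpha^2<c$ works, since $e^{\phi_\e}\geq 1$), and then $\Delta\phi_\e\leq(\alpha^2+O(\e)\alpha)\phi_\e\leq c\phi_\e\leq c\phi_\e e^{\phi_\e}$ for $\e$ small, so the same exponential is a genuine supersolution and $U_\e(y_{in})\leq u_0e^{-\alpha\ell_\e/\e}\to 0$. The paper avoids this pitfall by first comparing $u_\e$ with the solution of the linear equation and only then building the barrier $v_\delta=u_0e^{-\tau\,\mathrm{dist}/\e}$ with $\tau$ chosen \emph{small}; once you reverse the inequality on $\alpha$, your direct nonlinear barrier is equivalent.
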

	
	Finally we investigate the refined boundary-layer profile of \eqref{1.model} by finding its asymptotic expansion with respect to $\eps$ and exploiting how the boundary curvature affects the boundary layer profile. This is challenging question in general due to the non-locality as discussed in section 1,  in this paper,  we shall consider a simple case by assuming $\Omega=B_R(0)$ - a ball centered at origin with radius curvature is given by $\frac{1}{R}$. We find that the first two terms (zeroth and first order terms) of the (point-wise) asymptotic expansion of $u_\e(x)$ with respect to $\eps$ is adequate to help us find the role of the curvature on the boundary-layer structure (profile).

	To state our last results,  we introduce some notations. We denote by $\omega_N$ the volume of $B_R(0) \subset \mathbb{R}^N$ and $\alpha(N)=\frac{\pi^{\frac{N}{2}}}{\Gamma(\frac{N}{2}+1)}$ the volume of unit ball in $\mathbb{R}^N$, where $\omega_N=\alpha(N) R^N$. For convenience, we define
	\begin{align}\label{fts}
	f(s):=se^s\,\,\mathrm{and}\,\,F(s):=\int_0^sf(\tau)\mathrm{d}\tau=(s-1)e^s+1\geq0,\,\,\mathrm{for}\,\,s\geq0.
	\end{align}
	Then by the uniqueness (see Theorem~\ref{th1.1}) and {the classical moving plane method \cite{Ni}}, $\ueps(x)=\bigu(|x|)=\bigu(r)$ is radially symmetric in $B_R(0)$, where $\bigu$ uniquely solves
	\begin{align}
	&\eps^2\left(\bigu''+\frac{N-1}{r}\bigu'\right)={\meps}f(\bigu),\,\, r\in (0,R),\label{eq3}\\
	&{\meps}:={\meps}(\bigu)=\frac{m}{N\alpha(N)}\left(\int_0^Re^{\bigu(s)}s^{N-1}\dss\right)^{-1},\label{coef}\\
	&\bigu'(0)=0,\,\,\bigu(R)=u_0,\label{bd3}
	\end{align}
	where we remark that $N\alpha(N)$ is the surface area of the unit sphere $\partial{B}_1(0)$.
	
	Next we shall investigate how the boundary curvature will influence the boundary layer profile of \eqref{eq3}-\eqref{bd3} near the boundary from two different angles. The first one is to see how the slope of boundary layer profile at the boundary $r=R$ changes with respect to the boundary curvature $1/R$. The second one is for a given level set such that $\bigu(r_\eps)=c$, how the distance from boundary to the point $r_\eps$ varies with respect to $R$. To be more precisely,  for $R>0$ and $c\in(0,u_0)$, we define
	\begin{align}\label{gamma-thin}
	r_{\eps}(R,c):=\bigu^{-1}(c)\,\,\mathrm{and}\,\,
	\Gamma_{\eps}(R,c):=\{r\in[0,R]:\bigu(r)\in[c,u_0]\}=[r_{\eps}(R,c),R]
	\end{align}
	as functions of $R$ and $c$, where
	$\Gamma_{\eps}(R,c)$ is {a closed} set with width $R-r_{\eps}(R,c)=O(\eps)$. Denote
	\begin{align}\label{c-0}
	\hnu=-\sqrt{2F(u_0)}+\int_0^{u_0}\sqrt{\frac{F(t)}{2}}\,\mathrm{d}t.
	\end{align}
	Let $\Psi$ denote the unique positive solution of the ODE problem
	\begin{align}\label{phi-eq}
	\begin{cases}
	-\Psi'(\xi)=\sqrt{2m F(\Psi(\xi))},\,\, \ \xi>0,\\
	\Psi(0)=u_0>0,\,\,\Psi(\infty)=0.
	\end{cases}
	\end{align}
	
	Then our results on the refined asymptotic boundary layer profile in $\eps$ are given in the following theorem where we present a sharp pointwise asymptotic profile for  $\psi_\eps$ within the boundary layer and for the slope of $\psi_\eps$ at $r=R$ up to the first-order term of expansion in $\eps$, as well as the monotone property of the boundary layer thickness with respect to the boundary curvature.
	\begin{theorem}\label{mainthm}
		Let $m$ and $u_0$ be given positive constants and let $F$ and $\hnu$ be defined in \eqref{fts} and \eqref{c-0}, respectively. Then for any $\eps>0$, the solution $\bigu$ is positive and strictly increasing in $[0,R]$. Moreover, for any $0<\eps \ll 1$, we have the following results concerning the asymptotic expansion of $\psi_\eps$ with respect to $\eps$.
		\begin{itemize}
			\item[(i)]Let $r_{\eps}:=r_{\eps}(d_0)=R-d_0\eps\in(0,R]$  be a point with the distance $d_0\eps$ to the boundary, where the constant $d_0\geq0$ is independent of $\eps$. Then we have
			\begin{align}\label{asys-u}
			\bigu(r_{\eps}(d_0))
			=\Psi^R(d_0)-&\frac{\eps}{R}\sqrt{\frac{F(\Psi^R(d_0))}{2}}\notag\\[-0.7em]
			&\\[-0.7em]
			&\times\left({d_0N}{\hnu}-R^{N/2}\sqrt{\frac{\alpha(N)}{m}}(N-1)\jnu+o_{\eps}(1)\right)\notag
			\end{align}
			where $\Psi^R(d_0)=\Psi(\frac{d_0}{\sqrt{\alpha(N)R^{N/2}}})$ and
			\begin{align}
			{\jnu}=&\int_{\Psi^R(d_0)}^{u_0}\left(\frac{1}{F(s)}\int_0^{s}\sqrt{\frac{{F(t)}}{{F(s)}}}\,\mathrm{d}t\right)\mathrm{d}s.\label{wh-0717}
			\end{align}
			
			\item[(ii)] The slope of the boundary layer profile at the boundary has the asymptotic expansion as
			\begin{align}\label{0820-slope}
			\bigu'(R)=\frac{1}{\eps R^{N/2}}\sqrt{\frac{2m F(u_0)}{\alpha(N)}}+\frac{1}{R}\left(N\sqrt{\frac{F(u_0)}{2}}{\hnu}-(N-1)\int_0^{u_0}\sqrt{\frac{F(t)}{F(u_0)}}\,\mathrm{d}t\right)+o_{\eps}(1).
			\end{align}
			\item[(iii)] Let $r_{\eps}(R,c)$ be defined in \eqref{gamma-thin}. Then for each $c\in(0,u_0)$, we have
			\begin{equation}
			\label{good-asy}
			\begin{aligned}
			R-r_{\eps}(R,c)=~&\frac{\eps^2}{2}\alpha(N)R^{N-1}\left[{-\frac{N}{\sqrt{m}}\Psi^{-1}(c)\hnu}
			+\frac{N-1}{m}\int_c^{u_0}\left(\frac{1}{F(s)}\int_0^s\sqrt{\frac{F(t)}{F(s)}}dt\right)ds+o_{\eps}(1)\right]
			\\~&+\sqrt{\alpha(N)}R^{N/2}\Psi^{-1}(c)\eps.
			\end{aligned}
			\end{equation}
			In particular, for any $R_0>0$, there exists a positive constant $\delta_{N,R_0,c}$ depending mainly on $N$, $R_0$ and $c$ such that for each $\eps\in(0,\delta_{N,R_0,c})$, $R-r_{\eps}(R,c)$ is strictly increasing with respect to $R$ in \textcolor[rgb]{0,0,0}{$(0,R_0]$}.
		\end{itemize}
		
	\end{theorem}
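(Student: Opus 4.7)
My plan is to work in the stretched boundary-layer variable $\xi = (R-r)/\eps$, setting $\tilde\psi_\eps(\xi) = \bigu(R-\eps\xi)$ so that the radial ODE \eqref{eq3} becomes
\[
\tilde\psi_\eps''(\xi) - \frac{\eps(N-1)}{R-\eps\xi}\tilde\psi_\eps'(\xi) = \meps f(\tilde\psi_\eps(\xi)),\quad \tilde\psi_\eps(0) = u_0,
\]
with matching condition $\tilde\psi_\eps \to 0$ as $\xi \to \infty$ (a straightforward substitution shows the outer solution vanishes to every algebraic order in $\eps$). At leading order the inner problem is $\tilde\Psi_0'' = \rho_0 f(\tilde\Psi_0)$ with $\tilde\Psi_0(0) = u_0$, $\tilde\Psi_0(\infty) = 0$; multiplying by $\tilde\Psi_0'$ yields the first integral $(\tilde\Psi_0')^2 = 2\rho_0 F(\tilde\Psi_0)$, and passing \eqref{coef} to the limit forces $\rho_0 = m/(\alpha(N)R^N)$, which by rescaling identifies $\tilde\Psi_0 = \Psi^R$. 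The preliminary claims---positivity $\bigu > 0$ and strict monotonicity $\bigu' > 0$ on $(0,R]$---would be proved beforehand by the strong maximum principle applied to $\eps^2\Delta\bigu = \meps\bigu e^{\bigu}$, together with the identity $\eps^2(r^{N-1}\bigu')' = \meps r^{N-1}\bigu e^{\bigu}$, which shows $r^{N-1}\bigu'$ is strictly increasing from $0$.

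For the first-order expansion I would substitute $\tilde\psi_\eps = \Psi^R + \eps\tilde\Psi_1 + o(\eps)$ and $\meps = \rho_0 + \eps\rho_1 + o(\eps)$ and collect $O(\eps)$ terms:
\[
L[\tilde\Psi_1] := \tilde\Psi_1'' - \rho_0 f'(\Psi^R)\tilde\Psi_1 = \tfrac{N-1}{R}(\Psi^R)' + \rho_1 f(\Psi^R),\quad \tilde\Psi_1(0) = 0,\quad \tilde\Psi_1(\infty) = 0.
\]
The constant $\rho_1$ comes from the global constraint (not the ODE): localizing $\int_0^R e^{\bigu}s^{N-1}\,ds = R^N/N + \int_0^R(e^{\bigu}-1)s^{N-1}\,ds$ to the boundary layer and using the algebraic identity $e^s - 1 = f(s) - F(s)$ together with $\int f/\sqrt{F} = 2\sqrt{F}$ gives
\[
\int_0^{\infty}(e^{\Psi^R(\xi)}-1)\,d\xi = \frac{1}{\sqrt{2\rho_0}}\left(2\sqrt{F(u_0)} - \int_0^{u_0}\sqrt{F(t)}\,dt\right) = -\frac{\hnu}{\sqrt{\rho_0}},
\]
whence $\rho_1 = N\sqrt{\rho_0}\,\hnu/R$. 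Since $\phi_1 := (\Psi^R)'$ lies in $\ker L$ and decays at infinity, the reduction of order $\tilde\Psi_1 = V\phi_1$ converts the equation to $(\phi_1^2 V')' = g\phi_1$, producing the explicit representation
\[
\tilde\Psi_1(\xi) = -\phi_1(\xi)\int_0^\xi\frac{1}{\phi_1^2(\eta)}\int_\eta^\infty g(\tau)\phi_1(\tau)\,d\tau\,d\eta,
\]
in which the upper limit $\infty$ in the inner integral is the unique choice making $\tilde\Psi_1 \to 0$ at infinity. Pulling back to the $\psi$-variable via $d\eta = -d\psi/\sqrt{2\rho_0 F(\psi)}$ and using $\phi_1^2 = 2\rho_0 F(\Psi^R)$ and $f(\Psi^R)(\Psi^R)' = (F(\Psi^R))'$ reduces the double integral to the $\pmb{\mathrm{J}^{*}}$ integral plus a simple $\int d\psi/\sqrt{F(\psi)}$ term; this is the main computational step.

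Parts (i)--(iii) then follow by specialization. For (i), evaluate $\tilde\Psi_1(d_0)$ and apply the scaled first integral of the limit profile, $\int_c^{u_0}d\psi/\sqrt{F(\psi)} = \sqrt{2m}\,\Psi^{-1}(c)$, to combine the two contributions into the form \eqref{asys-u}. For (ii), write $\bigu'(R) = -\tilde\psi_\eps'(0)/\eps$; the cleanest route to $\tilde\Psi_1'(0)$ is the weighted integrated identity
\[
\sqrt{2\rho_0 F(u_0)}\,\tilde\Psi_1'(0) = \int_0^{\infty} g(\xi)\phi_1(\xi)\,d\xi,
\]
obtained by pairing $L[\tilde\Psi_1]=g$ against $\phi_1$ and integrating by parts, which sidesteps differentiating the variation-of-parameters formula and, combined with the value of $\rho_1$ above, yields \eqref{0820-slope}. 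For (iii), invert $\tilde\psi_\eps(\xi_c) = c$: the leading term is $\xi_c^{(0)} = \sqrt{\alpha(N)}R^{N/2}\Psi^{-1}(c)$ and the $\eps$ correction is $-\tilde\Psi_1(\xi_c^{(0)})/\phi_1(\xi_c^{(0)})$, producing the bracketed $R^{N-1}$ coefficient in \eqref{good-asy}; the monotonicity in $R$ on $(0,R_0]$ follows because the leading $\eps$-term has strictly positive $R$-derivative of order $R^{N/2-1}$, which dominates the $\eps^2$ correction uniformly for $\eps$ below an $R_0$- and $c$-dependent threshold.

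The hard part throughout is upgrading the formal asymptotic expansion to rigorous uniform $o(\eps)$ bounds. I would construct ordered sub- and supersolutions of the form $\Psi^R + \eps\tilde\Psi_1 \pm C\eps^{1+\eta}$ adapted to the exponential decay of $\Psi^R$ at infinity, invoke the comparison principle for the underlying elliptic equation, and close the loop on the solution-dependent coefficient $\meps$ by a contraction/fixed-point argument.
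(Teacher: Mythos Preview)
Your formal computations are correct and your outline would prove the theorem, but the route is genuinely different from the paper's. You work in the stretched variable, posit $\tilde\psi_\eps=\Psi^R+\eps\tilde\Psi_1+o(\eps)$, solve the linearized problem $L[\tilde\Psi_1]=g$ by reduction of order using $\phi_1=(\Psi^R)'\in\ker L$, and propose barriers plus a fixed-point on $\meps$ for rigor. The paper never writes the linearized equation. Instead it exploits the exact \emph{first integral} of the radial ODE,
\[
\frac{\eps^2}{2}\bigu'^2(r)+\eps^2\int_{R/2}^r\frac{N-1}{s}\bigu'^2(s)\,ds=\meps F(\bigu(r))+K_\eps,
\]
which immediately yields the uniform estimate $\bigl|\sqrt{\eps}\,\bigu'-\sqrt{2\rho_0F(\bigu)/\eps}\bigr|\le C$ (their Lemma~5.1). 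Multiplying this identity by $r^{N-1}$ and integrating---a Poho\v{z}aev-type argument---gives $\rho_1=N\sqrt{\rho_0}\,\hnu/R$ (their Proposition~5.3) using only the crude leading-order control, and then \eqref{asys-u} is obtained by dividing the first integral by $\sqrt{F(\bigu)}$ and integrating $\bigu'/\sqrt{2\rho_0F(\bigu)}$ in $r$ (Lemmas~5.4--5.5). Your variation-of-parameters formula and their integrated first integral are, after the change of variable $d\xi=-d\psi/\sqrt{2\rho_0F(\psi)}$, the same computation.

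What each buys: your approach is the textbook matched-asymptotics template and would transfer with little change to other nonlinearities or to non-radial geometries; the paper's approach is more economical for this specific problem because the integro-ODE delivers rigorous uniform bounds directly, with no barrier construction. One remark on your closure step: a genuine contraction on $\meps$ is not needed. The chain is linear---crude bounds give $\meps=\rho_0+O(\eps)$, hence $\bigu\to\Psi^R$ in the layer, hence $\int(e^{\bigu}-1)s^{N-1}ds=-\eps R^{N-1}\hnu/\sqrt{\rho_0}+o(\eps)$, hence $\rho_1$ is determined, hence the $\tilde\Psi_1$ equation has known right-hand side---so a straightforward bootstrap suffices, and the barrier argument need only be run once with $\meps$ already known to $o(\eps)$.
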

	
	\begin{remark}
		The result of Theorem \ref{mainthm}-(i) implies that the slope of boundary layer profile near the boundary increases with respect to the boundary curvature (i.e. decrease with respect to $R$).  The result of Theorem \ref{mainthm}-(ii) implies that the boundary-layer thickness  decreases with respect to the boundary curvature (i.e. increases with respect to $R$). An illustration of curvature effect on the boundary layer profile is shown in Fig.\ref{fig2}.
	\end{remark}

	\begin{figure}[h]
		\includegraphics[width=10cm]{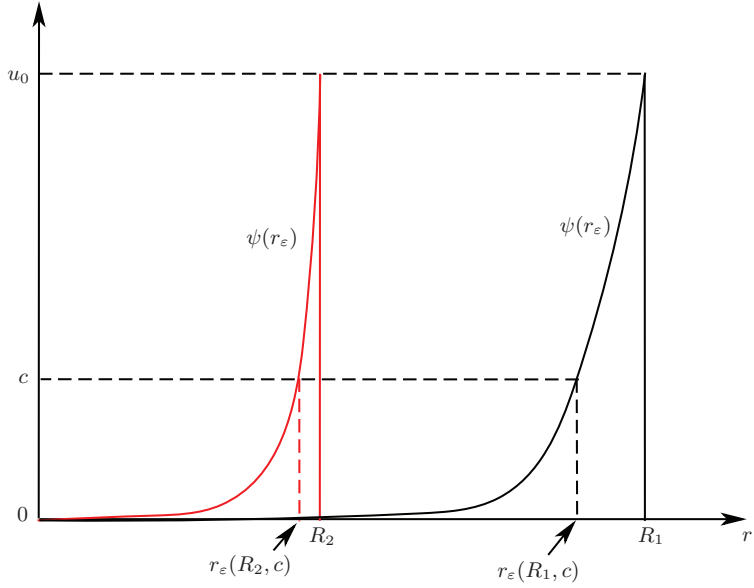}
		\caption{Schematic of the curvature effect on boundary layer profiles: layer steepness and thickness.}
		\label{fig2}
	\end{figure}

	\section{Proof of Theorems \ref{th1.1}}
	In this section, we shall prove Theorems \ref{th1.1}.
	
	\begin{proof}[Proof of Theorem \ref{th1.1}-existence  of  \eqref{1.model}]
		We start the proof by considering the following auxiliary  problem
		\begin{equation}
		\label{2.norm}
		\begin{cases}
		\e^2\Delta u_{\lambda}=\lambda mu_{\lambda}e^{u_{\lambda}}\quad &\mathrm{in}~\Omega,\\
		u_{\lambda}=u_0~&\mathrm{on}~\partial\Omega,
		\end{cases}
		\end{equation}
		where $\lambda$ is an arbitrary positive constant. Since $u_0>0$, by maximal principle we have
		$$u_0\geq u_{\lambda}>0\quad \mathrm{in}~\Omega.$$
		Then it is not difficult to see that $u\equiv u_0$ is a super-solution of \eqref{2.norm}, while $u\equiv 0$ provides a sub-solution. Therefore, following the standard monotone iteration scheme and the fact that $f(x)=xe^x$ is increasing for $x$ positive, we immediately know that \eqref{2.norm} has a unique classical solution $u_{\lambda}$ verifying
		$$0<u_{\lambda}\leq u_0.$$
		
		Now we claim that there exists $\lambda_{m}>0$ such that
		\begin{equation}
		\label{2.const}
		\lambda_m\int_{\Omega}e^{u_{\lambda_m}}{\dx}=1.
		\end{equation}
		We postpone the proof of \eqref{2.const} in Lemma \ref{le2.1}. Using this claim we can easily see that $u=\u$ is a solution of \eqref{1.model}.
	\end{proof}
	
	In order to prove the claim \eqref{2.const}, we give the following lemma.
	\begin{lemma}
		\label{le2.1}
		Let $\lambda_1\geq \lambda_2>0$ and $u_{\lambda_i},~i=1,2$ be the solutions of \eqref{2.norm} with $\lambda=\lambda_i,~i=1,2$ respectively. Then
		\begin{equation}
		\label{2.comp}
		0\leq u_{\lambda_2}-u_{\lambda_1}\leq\left(\frac{\lambda_1}{\lambda_2}-1\right)
		\left(1+\frac{\lambda_1}{\lambda_2}e^{\frac{\lambda_1}{\lambda_2}u_0}\right)u_0.
		\end{equation}
		Moreover, there exists a constant $\lambda_m$ such that
		$$\lambda_m\int_{\Omega}e^{\u}{\dx}=1.$$
	\end{lemma}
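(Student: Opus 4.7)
The plan is to prove the two-sided estimate \eqref{2.comp} by the comparison (sub/super-solution) method and then to locate $\lambda_m$ by the intermediate value theorem. For the lower bound $u_{\lambda_1} \leq u_{\lambda_2}$, I would observe that, since $\lambda_1 \geq \lambda_2$ and $f(s) = s e^s \geq 0$ on $[0,u_0]$,
\[
\e^2 \Delta u_{\lambda_1} \;=\; \lambda_1 m u_{\lambda_1} e^{u_{\lambda_1}} \;\geq\; \lambda_2 m u_{\lambda_1} e^{u_{\lambda_1}},
\]
so $u_{\lambda_1}$ is a sub-solution of the $\lambda_2$-problem with the common boundary value $u_0$; comparison with $u_{\lambda_2}$ (or the monotone iteration scheme already used in the proof of Theorem~\ref{th1.1}) then yields $u_{\lambda_1} \leq u_{\lambda_2}$ in $\Omega$.

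For the upper bound, I would construct an explicit constant super-solution of the $\lambda_2$-problem in the form $\bar u = u_{\lambda_1} + K$, with $K = K(r,u_0) \geq 0$ and $r := \lambda_1/\lambda_2 \geq 1$. The boundary inequality $\bar u = u_0 + K \geq u_0$ is automatic, and after dividing by $m\,e^{u_{\lambda_1}}$ the interior inequality $\e^2 \Delta \bar u \leq \lambda_2 m \bar u e^{\bar u}$ reduces to the pointwise algebraic condition
\[
r\,u_{\lambda_1}(x) \;\leq\; \bigl(u_{\lambda_1}(x)+K\bigr)\,e^{K}, \qquad x\in\Omega.
\]
Using $u_{\lambda_1} \leq u_0$ to bound the left-hand side and the pointwise bound $\bar u \leq (1+K)u_0 \leq r u_0$ to control $e^{\bar u}$, one can check that the choice $K = (r-1)\bigl(1 + r e^{r u_0}\bigr)u_0$ makes this inequality hold for every $x \in \Omega$; the comparison principle then gives $u_{\lambda_2} \leq u_{\lambda_1} + K$, exactly the bound claimed in \eqref{2.comp}.

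With \eqref{2.comp} in hand, the map $\lambda \mapsto u_\lambda \in L^\infty(\Omega)$ is continuous on $(0,\infty)$, and consequently
\[
T(\lambda) \;:=\; \lambda \int_\Omega e^{u_\lambda}\,\dx
\]
is continuous. Since $0 \leq u_\lambda \leq u_0$, we have $|\Omega| \leq \int_\Omega e^{u_\lambda}\,\dx \leq e^{u_0}|\Omega|$, hence $T(\lambda) \to 0$ as $\lambda \to 0^+$ and $T(\lambda) \to \infty$ as $\lambda \to \infty$. The intermediate value theorem then produces some $\lambda_m \in (0,\infty)$ with $T(\lambda_m) = 1$, which is the \emph{moreover} statement.

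The main obstacle I anticipate is the super-solution construction in the second step: one has to choose $K$ so that the algebraic inequality holds uniformly as $u_{\lambda_1}$ ranges over $[0,u_0]$, and the specific form $(r-1)(1 + re^{ru_0})u_0$ reflects both this range and the a priori upper bound on $\bar u$ used to control $e^{\bar u}$. Any weaker continuity estimate of the form $\|u_{\lambda_2} - u_{\lambda_1}\|_{\infty} \to 0$ as $\lambda_2 \to \lambda_1$ would already suffice for the IVT step, so the sharp form of \eqref{2.comp} is more a matter of bookkeeping than a genuine analytic obstruction; the continuity of $T$ and the IVT conclusion are then routine.
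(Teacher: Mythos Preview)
Your plan for the lower bound $u_{\lambda_1}\le u_{\lambda_2}$ and for the intermediate value theorem step matches the paper essentially verbatim. For the upper bound, however, you take a genuinely different route. The paper does not build a supersolution of the form $u_{\lambda_1}+K$; instead it studies the auxiliary function $w=\frac{\lambda_1}{\lambda_2}u_{\lambda_1}-u_{\lambda_2}$, derives the differential inequality
\[
\e^2\Delta w \;\le\; \lambda_2 m\,\widetilde F(u_{\lambda_1},u_{\lambda_2})\,w \;+\; \lambda_1\Bigl(\tfrac{\lambda_1}{\lambda_2}-1\Bigr)m\,u_0\,e^{\frac{\lambda_1}{\lambda_2}u_0},
\qquad \widetilde F(a,b)=\frac{ae^a-be^b}{a-b},
\]
applies the maximum principle (using $\widetilde F>1$) to bound $w$ from below, and then recovers \eqref{2.comp} from the splitting $u_{\lambda_2}-u_{\lambda_1}=-w+\bigl(\tfrac{\lambda_1}{\lambda_2}-1\bigr)u_{\lambda_1}$.

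Your supersolution idea is in fact simpler and correct, but the intermediate justification you sketch is broken: the bound $\bar u\le(1+K)u_0$ fails whenever $u_0<1$, the further bound $(1+K)u_0\le r u_0$ would force $K\le r-1$ (which your $K$ does not satisfy), and there is no $e^{\bar u}$ to control in the reduced inequality $r\,u_{\lambda_1}\le(u_{\lambda_1}+K)e^{K}$---only the constant $e^{K}$. The clean verification is much shorter: since $e^{K}\ge1$, it suffices that $rs\le s+K$ for all $s\in[0,u_0]$, i.e.\ $K\ge(r-1)u_0$; the constant $K=(r-1)\bigl(1+re^{ru_0}\bigr)u_0$ in \eqref{2.comp} certainly satisfies this. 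Thus your method actually delivers the sharper bound $u_{\lambda_2}-u_{\lambda_1}\le(r-1)u_0$, of which \eqref{2.comp} is a weakening. Either estimate already gives continuity of $\lambda\mapsto u_\lambda$ in $L^\infty(\Omega)$, so the specific constant in \eqref{2.comp} is indeed just bookkeeping, as you suspected.
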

	
	\begin{proof}
		The left hand side inequality follows from the standard comparison theorem directly. We only prove the inequality for the right hand side. Due to the fact $\lambda_1\geq \lambda_2>0$ and $u_{\lambda_1}>0$, one may check that
		\begin{equation}
		\label{2.est}
		\begin{aligned}
		\e^2 \Delta\left(\frac{\lambda_1}{\lambda_2}u_{\lambda_1}-u_{\lambda_2}\right)
		&\leq \lambda_2m\left(\frac{\lambda_1}{\lambda_2}u_{\lambda_1}e^{\frac{\lambda_1}{\lambda_2}u_{\lambda_1}}
		-u_{\lambda_2}e^{u_{\lambda_2}}\right)
		+\lambda_1\left(\frac{\lambda_1}{\lambda_2}-1\right)mu_{\lambda_1}e^{\frac{\lambda_1}{\lambda_2}u_{\lambda_1}}\\
		&\leq \lambda_2mF(u_{\lambda_1},u_{\lambda_2})\left(\frac{\lambda_1}{\lambda_2}u_{\lambda_1}-u_{\lambda_2}\right)
		+\lambda_1\left(\frac{\lambda_1}{\lambda_2}-1\right)mu_0e^{\frac{\lambda_1}{\lambda_2}u_0},
		\end{aligned}
		\end{equation}
		where
		\begin{equation*}
		F(a,b):=
		\begin{cases}
		\frac{ae^a-be^b}{a-b},\quad &\mathrm{if}~a\neq b,\\
		(a+1)e^a,&\mathrm{if}~a=b.
		\end{cases}
		\end{equation*}
		From the fact
		\begin{equation}
		\label{2.fact}
		0<u_{\lambda_1}\leq u_{\lambda_2}\leq u_0,
		\end{equation}
		we have
		\begin{equation}
		\label{2.bound}
		1< F(u_{\lambda_1},u_{\lambda_2})\leq (1+u_0)e^{u_0}.
		\end{equation}
		As a consequence of \eqref{2.est} and \eqref{2.bound}, we have
		\begin{equation*}
		\frac{\lambda_1}{\lambda_2}u_{\lambda_1}-u_{\lambda_2}\geq-\frac{\lambda_1}{\lambda_2}
		\left(\frac{\lambda_1}{\lambda_2}-1\right)u_0e^{\frac{\lambda_1}{\lambda_2}u_0}.
		\end{equation*}
		Along with \eqref{2.fact}, we get
		\begin{equation*}
		\begin{aligned}
		u_{\lambda_2}-u_{\lambda_1}=&~
		u_{\lambda_2}-\frac{\lambda_1}{\lambda_2}u_{\lambda_1}
		+\left(\frac{\lambda_1}{\lambda_2}u_{\lambda_1}-u_{\lambda_1}\right)
		\leq u_{\lambda_2}-\frac{\lambda_1}{\lambda_2}u_{\lambda_1}+\left(\frac{\lambda_1}{\lambda_2}-1\right)u_0\\
		\leq &\left(\frac{\lambda_1}{\lambda_2}-1\right)
		\left(1+\frac{\lambda_1}{\lambda_2}e^{\frac{\lambda_1}{\lambda_2}u_0}\right)u_0.
		\end{aligned}
		\end{equation*}
		Thus, we prove the right hand side of \eqref{2.comp}. It implies that the continuity of $u_{\lambda}$ with respect to $\lambda$. On the other hand, we have
		\begin{equation*}
		\lim_{\lambda\to 0^+}\lambda\int_{\Omega}e^{u_\lambda}{\dx}=0\quad \mathrm{and}\quad
		\lim_{\lambda\to \infty}\lambda\int_{\Omega}e^{u_\lambda}{\dx}=\infty.
		\end{equation*}
		Then we can find $\lambda_m$ such that $\int_{\Omega}\lambda_me^{u_{\lambda_m}}{\dx}=1$ and it completes the proof of Lemma \ref{le2.1}.
	\end{proof}

	\begin{proof}[Proof of Theorem \ref{th1.1} -- uniqueness of \eqref{1.model}]
		Suppose the uniqueness is not true, then there are two distinct solutions $v_1,v_2$. We shall prove $v_1\equiv v_2$ by contradiction and divide our argument into two steps.
		
		\textbf{Step 1.} We prove that either $v_1\geq v_2$ or $v_1\leq v_2$. Without loss of generality, we may assume $\int_{\Omega} e^{v_1}{\dx}\geq \int_{\Omega} e^{v_2}dx.$  Under this assumption, we claim $v_1\geq v_2$. Supposing this is false, then there exists a point $p\in\Omega,$ such that
		\begin{equation*}
		(v_1-v_2)\mid_p=\min_{\Omega}(v_1-v_2)<0.
		\end{equation*}
		As a consequence, we have
		\begin{equation*}
		\left(\dfrac{v_1e^{v_1}}{\int_{\Omega}e^{v_1}\dx}-\frac{v_2e^{v_2}}{\int_{\Omega}e^{v_2}{\dx}}\right)\bigg|_p<0.
		\end{equation*}
		Then
		\begin{equation*}
		\left[\e^2\Delta(v_1-v_2)-m\left(\dfrac{v_1e^{v_1}}{\int_{\Omega}e^{v_1}{\dx}}
		-\dfrac{v_2e^{v_2}}{\int_{\Omega}e^{v_2}{\dx}}\right)\right]\bigg|_p>0.
		\end{equation*}
		Contradiction arises. Thus, the claim holds. As a result, we get that for any two solutions $v_1,v_2$,  either $v_1\geq v_2$ or $v_1\leq v_2$.
		
		\textbf{Step 2.} Next, we prove that if $v_1\geq v_2$, then $v_1=v_2$. We set $w=v_1-v_2$, suppose $w\neq0$ and
		\begin{equation*}
		w(p_0)=\max_{\Omega}w>0.
		\end{equation*}
		Then
		\begin{equation*}
		\dfrac{e^{v_1(p_0)}}{e^{v_2(p_0)}}\geq\dfrac{e^{v_1}}{e^{v_2}}\quad \mathrm{in}\quad \Omega.
		\end{equation*}
		It implies that
		\begin{equation*}
		\dfrac{e^{v_1(p_0)}}{e^{v_2(p_0)}}\geq\dfrac{\int_{\Omega}e^{v_1}{\dx}}{\int_{\Omega}e^{v_2}{\dx}},
		\end{equation*}
		and
		\begin{equation*}
		\left(\frac{v_1e^{v_1}}{\int_{\Omega}e^{v_1}{\dx}}-\frac{v_2e^{v_2}}{\int_{\Omega}e^{v_2}{\dx}}\right)\Big|_{p_0}>0.
		\end{equation*}
		Therefore,
		\begin{equation*}
		\e^2\Delta(v_1-v_2)\Big|_{p_0}=m\left(\frac{v_1e^{v_1}}{\int_{\Omega}e^{v_1}{\dx}}
		-\frac{v_2e^{v_2}}{\int_{\Omega}e^{v_2}{\dx}}\right)\Big|_{p_0}>0,
		\end{equation*}
		which contradicts to the choice of the point $p_0$, thus $v_1(p_0)=v_2(p_0)$ and $v_1\equiv v_2.$
	\end{proof}
	
	\begin{proof}[Proof of Theorem \ref{th1.1} -- existence and uniqueness of \eqref{1.steady} with non-degeneracy.]
		Since the problem \eqref{1.steady} is equivalent to \eqref{1.model}-\eqref{vu}, the existence and uniqueness of solutions to  \eqref{1.steady} follow directly from the results for \eqref{1.model}. Now it remains to show the solution is non-degenerate. We denote the solution of \eqref{1.steady} by $(u,v)$ and consider the linearized problem of \eqref{1.steady} at $(u,v):$
		\begin{equation}\label{2.linear}
		\begin{cases}
		\nabla\cdot(\nabla \phi-v\nabla \psi-\nabla u\phi)=0, \ & \mathrm{in}~\Omega,\\
		\Delta\psi-\phi u-\psi v=0, \ &\mathrm{in}~\Omega,\\
		\partial_{\nu}\phi-v\partial_{\nu}\psi-\partial_{\nu}u\phi=0,~\psi=0, \ & \mathrm{on}~\partial\Omega,\\
		\int_{\Omega}\phi {\dx}=0.
		\end{cases}
		\end{equation}
		We shall prove that \eqref{2.linear} only admits the trivial solution. We notice that the first equation in \eqref{2.linear} can be written as
		\begin{equation*}
		\nabla\cdot\left(v\nabla\left(\frac{\phi}{v}-\psi\right)\right)=0,
		\end{equation*}
		where we used the fact $\nabla u=\frac{\nabla v}{v}$. Testing the above equation by $\frac{\phi}{v}-\psi,$ then an integration by parts together with the boundary condition shows that any solution of \eqref{2.linear} verifies the equation
		\begin{equation*}
		\int_{\Omega}v\left|\nabla\left(\frac{\phi}{v}-\psi\right)\right|^2{\dx}=0,
		\end{equation*}
		which implies that
		\begin{equation}
		\label{2.relation}
		\phi=(\psi+E)v\quad \mbox{for some constant }E.
		\end{equation}
		Since $\int_{\Omega}\phi {\dx}=0$, we get from \eqref{2.relation} that if $\psi$ is not a constant, then
		\begin{equation}
		\label{2.inequality}
		\max_{\Omega}\psi+E>0\quad\mbox{and}\quad\min_{\Omega}\psi+E<0.
		\end{equation}
		Substituting \eqref{2.relation} into the second equation of \eqref{2.linear}, we have
		\begin{equation}
		\label{2.reduce-1}
		\begin{cases}
		\Delta\psi-v\psi-uv(\psi+E)=0\quad&\mathrm{in}~\Omega,\\
		\psi=0\quad&\mathrm{on}~\partial\Omega.
		\end{cases}
		\end{equation}
		We claim that equation \eqref{2.reduce-1} only admits the trivial solution. Supposing that it is false, without loss of generality, we can assume that $\psi(p)=\max\limits_{\Omega}\psi>0.$ As a consequence,
		\begin{equation*}
		\Delta\psi(p)-v\psi(p)-uv(\psi(p)+E)<0,
		\end{equation*}
		where we have used \eqref{2.inequality}, and contradiction arises. Thus $\psi\equiv0$, which further implies that $\phi\equiv0$ from the second equation of \eqref{2.linear}. This means that any solution of \eqref{1.steady} is non-degenerate.
	\end{proof}

	\medskip
	\section{Proof of Theorems \ref{th1.2} and Corollary \ref{th1.3}}
	
	In order to prove Theorem \ref{th1.2}, we consider the following equation
	\begin{equation}
	\label{3.1}
	\begin{cases}
	\e^2\Delta V=d^2V \quad &\mathrm{in}~\Omega,\\
	V=1&\mathrm{on}~\partial\Omega,
	\end{cases}
	\end{equation}
	where $d$ is a positive constant independent of $\e$. Set $W_{\e}=-\e\log V$. Then by the arguments in \cite[Lemma 2.1]{gw}, we have
	\begin{equation*}
	\begin{cases}
	W_{\e}(x)=d\mathrm{dist}(x,\partial\Omega)+O(\e)~&\mathrm{in}~\Omega,\\
	\frac{\partial{W}_{\e}}{\partial\nu}=-d+O(\e)~&\mathrm{on}~\partial\Omega.
	\end{cases}
	\end{equation*}
	and
	\begin{equation}
	\label{3.2-asy}
	|V(x)|\leq Ce^{-d\frac{\mathrm{dist}(x,\partial\Omega)}{\e}}\quad\mathrm{in}\quad\Omega.
	\end{equation}
	As a consequence of \eqref{3.2-asy}, we have for any compact subset $K$ of $\Omega$, there exists a positive constant $\e_0$ such that
	\begin{equation}
	\label{3.3-asy}
	\max_K|V|\leq C(K)e^{-\frac{M(K)}{\e}}\quad\mathrm{for}\quad 0<\e<\e_0,
	\end{equation}
	where $C(K)$ and $M(K)$ are some generic constants depending on $K$ only.
	
	From \eqref{3.3-asy}, it is easy to see that for any fixed compact subset $K$ of $\Omega$, we could obtain that $V$ goes to $0$ as $\e$ tends to $0.$ To capture the behavior of $V$ near $\partial\Omega,$ we introduce the Fermi coordinates for any $x\in\Omega_\delta^c$, that is
	$$X:(y,z)\in\partial\Omega\times\mathbb{R}^+\longmapsto x=X(y,z)=y+z\nu(y)\in\Omega_\delta^c,$$
	where $\nu$ is the unit normal vector on $\partial\Omega$, and $\Omega_\delta^c$ denotes the following open set
	\begin{equation}
	\label{3.com-set}
	\Omega_{\delta}^c=\{x\in\Omega\mid 0<\mathrm{dist}(x,\partial\Omega)<\delta\}.
	\end{equation}
	There is a number $\delta_0>0$ such that for any $\delta\in(0,\delta_0)$, the map $X$ is {from $\Omega_{\delta}^c$ to a subset of $\mathcal{O}$, where}
	\begin{equation*}
	\mathcal{O}=\{(y,z)\in\partial\Omega\times(0,2\delta)\}.
	\end{equation*}
	It follows that $X$ is actually a diffeomorphism onto its image $\mathcal{N}=X(\mathcal{O})$. We refer the readers to \cite[Remark 8.1]{dkw} for the proof on the existence of $\delta_0$. For any fixed $z$, we set
	\begin{equation*}
	\Gamma_z(y)=\{p\in\Omega\mid p=y+z\nu(y)\}.
	\end{equation*}
	It is not difficult to see that the distance between any point of $\Gamma_z(y)$ and $\partial\Omega$ is $|z|$. Under the Fermi coordinate, we have
	\begin{equation}
	\label{3.1-fc}
	\Delta=\partial_z^2-H_{\Gamma_z(y)}\partial_z+\Delta_{\Gamma_z(y)},
	\end{equation}
	where $H_{\Gamma_z(y)}$ is the mean curvature at the point in $\Gamma_z(y)$ and $\Delta_{\Gamma_z(y)}$ stands for the Beltrami-Laplacian on $\Gamma_z(y)$. We shall provide the proof of \eqref{3.1-fc} in the Appendix, see Lemma \ref{lea.1}.
	
	By making use of \eqref{3.3-asy}, we can obtain the following result on the behavior of $V$ near $\partial\Omega$.
	\begin{lemma}
		\label{le3.1}
		Let $\Omega$ be a smooth domain in $\mathbb{R}^N (N\geq 1)$ and $V_{\e}\in C^{2,\alpha}(\Omega)\cap C^0(\overline{\Omega})$ be the unique solution of \eqref{3.1}. There exists a positive constant $\e_0$ such that for any $\e\in(0,\e_0)$, it holds that
		\begin{equation}
		\label{3.2}
		b_1e^{-b_2\frac{dist(x,\partial\Omega)}{\e}}\leq V_{\e}(x)\leq b_3e^{-b_4\frac{dist(x,\partial\Omega)}{\e}}\quad\mathrm{in}\quad \Omega^c_{\delta},
		\end{equation}
		where $\delta\in(0,\min\{\frac12,\delta_0\})$, and $b_1,b_2,b_3$, $b_4$ are some generic positive constants independent of $\e.$
	\end{lemma}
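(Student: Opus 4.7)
The plan is to establish both inequalities by constructing explicit exponential sub- and super-solutions of the form $b\,e^{-\beta z/\e}$ in Fermi coordinates and then invoking the comparison principle for the operator $\mathcal{L}:=\e^2\Delta-d^2$. Fix an auxiliary radius $\widetilde{\delta}\in(\delta,\delta_0)$ so that $\Omega_\delta^c\subset\Omega_{\widetilde{\delta}}^c$ and the Fermi map is a diffeomorphism onto $\Omega_{\widetilde{\delta}}^c$ with normal coordinate $z=\mathrm{dist}(x,\partial\Omega)$. Using the Laplacian expansion \eqref{3.1-fc} and the smoothness of $\partial\Omega$, a direct computation gives, for every $\beta>0$ and $\Phi_\beta(x):=e^{-\beta z/\e}$,
\begin{equation*}
\mathcal{L}\Phi_\beta=\bigl(\beta^2-d^2+\e\beta H_{\Gamma_z(y)}\bigr)\Phi_\beta,
\end{equation*}
while the mean curvatures $H_{\Gamma_z(y)}$ stay bounded by some constant $C_0$ uniformly on $\Omega_{\widetilde{\delta}}^c$.

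For the upper bound I would fix $\beta_4\in(0,d)$ and set $V^+:=b_3\Phi_{\beta_4}$, which satisfies $\mathcal{L}V^+\le 0$ as soon as $\e<(d^2-\beta_4^2)/(\beta_4 C_0)$. On $\partial\Omega$, $V^+=b_3\ge 1=V_\e$ provided $b_3\ge 1$, and on the inner boundary $\partial\Omega_{\widetilde{\delta}}$ the already-established decay \eqref{3.3-asy}, applied with $K=\overline{\Omega_{\widetilde{\delta}}}$, gives $V_\e\le C_{\widetilde{\delta}}e^{-M_{\widetilde{\delta}}/\e}$; choosing $\beta_4$ small enough that $\beta_4\widetilde{\delta}<M_{\widetilde{\delta}}$ and then $b_3\ge\max\{1,C_{\widetilde{\delta}}\}$ makes $V^+\ge V_\e$ on all of $\partial(\Omega_{\widetilde{\delta}}^c)$, so the comparison principle forces $V_\e\le V^+$ throughout $\Omega_{\widetilde{\delta}}^c$ and a fortiori on $\Omega_\delta^c$, with $b_4=\beta_4$.

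For the lower bound I would fix $\beta_2>d$ and consider $V^-(x):=\Phi_{\beta_2}(x)-e^{-\beta_2\widetilde{\delta}/\e}$, which by the identity above satisfies
\begin{equation*}
\mathcal{L}V^-=(\beta_2^2-d^2+\e\beta_2 H_{\Gamma_z(y)})\Phi_{\beta_2}+d^2 e^{-\beta_2\widetilde{\delta}/\e}\ge 0
\end{equation*}
whenever $\e<(\beta_2^2-d^2)/(\beta_2 C_0)$. Since $V^-=1-e^{-\beta_2\widetilde{\delta}/\e}\le 1=V_\e$ on $\partial\Omega$ and $V^-=0\le V_\e$ on $\partial\Omega_{\widetilde{\delta}}$, comparison yields $V_\e\ge V^-$ on $\Omega_{\widetilde{\delta}}^c$. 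Restricting to $\Omega_\delta^c$ and factoring $V^-=\Phi_{\beta_2}\bigl(1-e^{-\beta_2(\widetilde{\delta}-z)/\e}\bigr)$ with $z\le\delta<\widetilde{\delta}$, for $\e$ small enough that $e^{-\beta_2(\widetilde{\delta}-\delta)/\e}\le\tfrac{1}{2}$ I obtain $V^-\ge\tfrac{1}{2}\Phi_{\beta_2}$, which yields the stated lower bound with $b_1=\tfrac12$ and $b_2=\beta_2$.

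The main technical point is the barrier computation: it rests entirely on the Laplacian expansion \eqref{3.1-fc} and on the uniform boundedness of the mean curvatures $H_{\Gamma_z(y)}$ on a one-sided tubular neighborhood of $\partial\Omega$. Both ingredients are geometric and are supplied by Lemma \ref{lea.1} in the Appendix together with the standard construction of Fermi coordinates as in \cite{dkw}; once they are in place, the remainder of the argument is a routine double application of the weak maximum principle, with the boundary data on the inner boundary $\partial\Omega_{\widetilde\delta}$ controlled by the previously-established interior decay \eqref{3.3-asy}.
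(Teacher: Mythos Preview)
Your proof is correct and follows the same overall strategy as the paper: construct exponential barriers in Fermi coordinates on a tubular neighborhood of $\partial\Omega$ and apply the comparison principle for $\e^2\Delta-d^2$. The differences are only cosmetic. For the upper bound the paper simply quotes the global estimate \eqref{3.2-asy} (which already gives $V_\e\le Ce^{-d\,\mathrm{dist}(x,\partial\Omega)/\e}$ in all of $\Omega$), whereas you rebuild a separate supersolution and invoke \eqref{3.3-asy} on the inner boundary; this works but is redundant, since \eqref{3.3-asy} is itself a consequence of \eqref{3.2-asy}. For the lower bound the paper uses the subsolution $(2\delta-z)e^{-d_1 z/\e}$ with $d_1>d$ on $\Omega_{2\delta}^c$, while you use $e^{-\beta_2 z/\e}-e^{-\beta_2\widetilde\delta/\e}$ with $\beta_2>d$ on $\Omega_{\widetilde\delta}^c$; both vanish on the inner boundary and become subsolutions for small $\e$ via the same curvature-bounded computation, and both yield a lower bound of the form $b_1 e^{-b_2\,\mathrm{dist}(x,\partial\Omega)/\e}$ on $\Omega_\delta^c$. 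Your barrier is arguably a bit cleaner since it avoids the product-rule terms, but the two arguments are essentially interchangeable.
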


	\begin{proof}
		For convenience we set $d=1$. When $n=1$, without loss of generality we can assume that $\Omega=[-1,1]\subset\mathbb{R}$, it is easy to see that the solution admits the following representation
		\begin{equation*}
		V_{\e}(x)=\dfrac{1}{1+e^{-\frac{2}{\e}}}\left(e^{-\frac{(x+1)}{\e}}+e^{\frac{(x-1)}{\e}}\right)
		\quad\mathrm{for}\quad x\in[-1,1].
		\end{equation*}
		Hence, Lemma \ref{le3.1} immediately follows. Particularly, in this case we can choose
		$$b_1=\dfrac{1}{1+e^{-\frac{2}{\e}}},~b_2=1,~b_3=\dfrac{2}{1+e^{-\frac{2}{\e}}},~b_4=1.$$
		
		Now let us give the proof for $n\geq2$. Without loss of generality, we may assume that $\Omega$ is a simply connected domain for simplicity, the case for multiply-connected domain can be proved similarly. Since $\Omega$ is simply connected, $\partial\Omega$ is a smooth connected manifold of dimension $n-1.$ Let $\Omega_{2\delta}^c$ be defined in \eqref{3.com-set}. We set $u_{\delta}$ by
		\begin{equation*}
		u_{\delta}(x)=(2\delta-\mathrm{dist}(x,\partial\Omega))e^{-d_1\frac{\mathrm{dist}(x,\partial\Omega)}{\e}}
		\end{equation*}
		with $d_1$ to be determined later. It is easy to see that
		\begin{equation}
		\label{3.1-4}
		u_{\delta}(x)=
		\begin{cases}
		2\delta~&\mathrm{on}\quad\partial\Omega,\\
		0~&\mathrm{on}\quad \partial\Omega_{2\delta}^c\setminus\partial\Omega.
		\end{cases}
		\end{equation}
		A straightforward computation with \eqref{3.1-fc} gives
		\begin{equation*}
		\begin{aligned}
		\e^2\Delta u_{\delta}-u_{\delta}=~&(\e^2\partial_z^2-\e^2H_{\Gamma_z(y)}\partial _z+\e^2\Delta_{\Gamma_z(y)}-1)(2\delta-z)e^{-d_1\frac{z}{\e}}\\
		=~&(d_1^2+\e H_{\Gamma_z(y)}d_1-1)u_\delta+(2\e d_1+\e^2H_{\Gamma_z(y)})e^{-d_1\frac{z}{\e}}.
		\end{aligned}
		\end{equation*}
		Choosing $d_1>1$ and $\e$ sufficiently small, then we have
		\begin{equation*}
		\e^2\Delta u_{\delta}-u_{\delta}\geq0\quad\mathrm{in}~{\Omega_{2\delta}^c}.
		\end{equation*}
		Taking $\delta$ sufficiently small when necessary, together with \eqref{3.1-4} and the classical comparison argument, we have
		\begin{equation*}
		V_{\e}\geq u_{\delta}(x)\quad\mathrm{in}~\overline{\Omega_{2\delta}^c},
		\end{equation*}
		which implies
		\begin{equation}
		\label{3.1-7}
		V_{\e}\geq \delta e^{-d_1\frac{\mathrm{dist}(x,\partial\Omega)}{\e}}\quad\mathrm{in}~\overline{\Omega_{\delta}^c}.
		\end{equation}
		For the upper bound for $V_{\e}$ in $\Omega_\delta^c$,  by \eqref{3.2-asy}, we have
		\begin{equation}
		\label{3.1-up}
		V_{\e}\leq Ce^{-\frac{\mathrm{dist}(x,\partial\Omega)}{\e}}\quad\mathrm{in}\quad \overline{\Omega_\delta^c}.
		\end{equation}
		From \eqref{3.1-7} and \eqref{3.1-up}, we get
		\begin{equation*}
		\delta e^{-d_1\frac{\mathrm{dist}(x,\partial\Omega)}{\e}}\leq V_{\e}(x)\leq Ce^{-\frac{\mathrm{dist}(x,\partial\Omega)}{\e}}\quad\mathrm{in}\quad \Omega_{\delta}^c.
		\end{equation*}
		This is equivalent to \eqref{3.2} with
		$$b_1=\delta,~b_2=d_1,~b_3=C,~b_4=1.$$
		Thus, we finish the proof.
	\end{proof}
	\medskip

	\begin{proof}[Proof of Theorem \ref{th1.2}]
		For \eqref{1.model}, by maximal principle, we have
		\begin{equation*}
		0<u_\e<u_0.
		\end{equation*}
		Then it is easy to check that
		\begin{equation*}
		0<\frac{1}{\int_{\Omega}e^{u_0}{\dx}}\leq \frac{e^{u_\e}}{\int_{\Omega}e^{u_\e}{\dx}}\leq \frac{e^{u_0}}{|\Omega|}.
		\end{equation*}
		We set
		\begin{equation}
		\label{3.2-3}
		L_1:=\left(\frac{1}{\int_{\Omega}e^{u_0}{\dx}}\right)^{\frac12}\quad \mathrm{and}\quad
		L_2:=\left(\frac{e^{u_0}}{|\Omega|}\right)^{\frac12}.
		\end{equation}
		Let $u_{\e,L_1}$ and $u_{\e,L_2}$ be the solution of \eqref{3.1} with the righthand side replaced by $L_1^2V$ and $L_2^2V$ respectively. Then following the comparison argument, we can get
		\begin{equation*}
		u_{\e,L_2}\leq u_\e \leq u_{\e,L_1}.
		\end{equation*}
		As a consequence of Lemma \ref{le3.1}, we can find four constants $b_5,b_6,b_7,b_8$ which are independent of $\e,$ such that
		\begin{equation}
		\label{3.2-5}
		b_5e^{-b_6\frac{\mathrm{dist}(x,\partial\Omega)}{\e}}\leq u_\e(x)\leq b_7e^{-b_8\frac{\mathrm{dist}(x,\partial\Omega)}{\e}}\quad\mathrm{in}\quad \Omega^c_{\delta}.
		\end{equation}
		While in $\overline{\Omega_\delta}$, by equation \eqref{3.3-asy} we can find two positive constants $C(\overline{\Omega_\delta})$ and $M(\overline{\Omega_\delta})$ such that
		\begin{equation}
		\label{3.2-6}
		\max_{\overline{\Omega_\delta}}u_\e(x)\leq C(\overline{\Omega_\delta})e^{-\frac{M(\overline{\Omega_\delta})}{\e}}\quad\mathrm{in}\quad \overline{\Omega_\delta}.
		\end{equation}
		Then \eqref{1.th2-1} follows by \eqref{3.2-5} and \eqref{3.2-6}.
		\medskip
		
		In the following, we shall prove \eqref{1.th2-2}. We first prove that
		\begin{equation}
		\label{3.3-0}
		|\Omega|<\int_{\Omega}e^{u_\e}{\dx}\leq |\Omega|+C\e,
		\end{equation}
		for some positive constant $C$. The left hand side of \eqref{3.3-0} is obvious since $u_\e>0$ in $\Omega$. For the right hand side inequality, by \eqref{3.2-asy} and $u_\e<u_0$ in $\Omega$, we have
		\begin{equation}
		\label{3.3-01}
		\int_{\Omega}e^{u_\e}{\dx}\leq \int_{\Omega}1{\dx}+C\int_{\Omega}u_\e {\dx}
		\end{equation}
		for some positive constant $C.$ For the second term on the right hand side of \eqref{3.3-01}, we have
		\begin{equation}
		\label{3.3-02}
		\int_{\Omega}u_\e {\dx}=\int_{\Omega^c_\delta}u_\e {\dx}+\int_{\Omega_\delta}u_\e {\dx}\leq \int_{\Omega^c_\delta}u_\e {\dx}+O(\e),
		\end{equation}
		where we used \eqref{3.2-asy} to control the second term. While for the first one, we have
		\begin{equation*}
		\begin{aligned}
		\int_{\Omega^c_\delta}u_\e {\dx}&\leq C\int_0^\delta\int_{\Gamma_z(y)}e^{-\frac{z}{\e}}\mathrm{d}z\mathrm{d}y
		\leq C\max_{z\in(0,\delta)}|\Gamma_z(y)|\int_0^\delta e^{-\frac{z}{\e}}\mathrm{d}z\\
		&=C\e\max_{z\in(0,\delta)}|\Gamma_z(y)|\int_0^\frac{\delta}{\e}e^{-s}\mathrm{d}s\leq C\e\max_{z}|\Gamma_z(y)|.
		\end{aligned}
		\end{equation*}
		By choosing $\delta$ small, we have $\max_{z\in(0,\delta)}|\Gamma_z(y)|=|\partial\Omega|+o_\delta(1)$, where $o_\delta(1)\xrightarrow{\delta\to0}0$. Hence, we have
		\begin{equation*}
		\int_{\Omega^c_\delta}u_\e {\dx}=O(\e),
		\end{equation*}
		which together with \eqref{3.3-02} gives \eqref{3.3-0}. Using \eqref{3.3-0}, it is not difficult to see that
		\begin{equation*}
		\frac{1}{\int_{\Omega}e^{u_\e}{\dx}}=\frac{1}{|\Omega|+\int_{\Omega}(e^{u_\e}-1){\dx}}=\frac{1}{|\Omega|}-C_{u_\e},
		\end{equation*}
		where
		\begin{equation*}
		0<C_{u_\e}<C_0\e.
		\end{equation*}
		for some constant $C_0$. We decompose
		\begin{equation*}
		u_\e(x)=U_{\e}(x/\e)+\phi_\e(x),
		\end{equation*}
		where $U_{\e}$ is the solution of \eqref{1.def-u1} and
		\begin{equation}
		\label{3.3-4}
		\begin{cases}
		\e^2\Delta \phi_{\e}=-mC_{u_\e}u_\e e^{u_\e}+\frac{m}{|\Omega|}\left(e^{u_\e}u_\e-e^{U_{\e}}U_{\e}\right)~&\mbox{in}~\Omega,\\
		\phi_{\e}=0~&\mbox{on}~\partial\Omega.
		\end{cases}
		\end{equation}
		It is easy to see that $U_{\e}>0$ in $\Omega.$ We write the first equation in \eqref{3.3-4} as
		\begin{equation*}
		\e^2\Delta \phi_{\e}-\frac{m}{|\Omega|}\left(e^{U_{\e}+\phi_{\e}}(U_{\e}+\phi_{\e})
		-e^{U_{\e}}U_{\e}\right)
		=-mC_{u_\e}u_\e e^{u_\e}.
		\end{equation*}
		Concerning the above equation, by the fact that the function $f(x)=xe^x$ is an increasing function for $x>0$ and the right hand side is negative, we get
		$$\phi_{\e}>0\quad\mathrm{in}\quad \Omega$$
		by maximal principle. Assuming $\phi_{\e}(p)=\max_{\Omega}\phi_{\e}$, by Mean Value Theorem, we have
		\begin{equation*}
		\e^2\Delta \phi_{\e}
		-\frac{m}{|\Omega|}\left(e^{U_{\e}+\theta\phi_{\e}}(U_{\e}+\theta\phi_{\e})
		+e^{U_{\e}+\theta\phi_{\e}}\right)\phi_{\e}=-mC_{u_\e}u^{\e}
		\end{equation*}
		for some $\theta\in(0,1)$. Together with the fact that $\e^2\Delta\phi_{\e}(p)<0$, we directly obtain that
		\begin{equation*}
		\phi_{\e}(p)=O(\e),
		\end{equation*}
		which proves that $\|\phi_{\e}\|_{L^\infty(\Omega)}=O(\e)$ and it finishes the proof.
	\end{proof}

	\begin{proof}[Proof of Corollary~\ref{th1.3}]
		We shall present the proof for three cases in Corollary~\ref{th1.3} separately.
		
		Case (1):  $\lim_{\e\to0}\frac{\ell_\e}{\e}=0$. Let $v^\e(y)=u_\e(\e y)$, then $v^\e(y)$ satisfies
		\begin{equation}
		\label{3.4-1}
		\Delta_yv^\e(y)=m\frac{v^{\e}(y)e^{v^{\e}(y)}}{\int_{\Omega^\e}e^{v^\e(y)}\mathrm{d}y}.
		\end{equation}
		Recall that, by maximal principle, we have
		$$\|v^{\e}(y)\|_{L^\infty(\Omega^\e)}=\|u_\e(x)\|_{L^\infty(\Omega)}\leq u_0.$$
		Following the standard elliptic estimate and the fact that the right-hand side of \eqref{3.4-1} is uniformly bounded, we get
		\begin{equation*}
		|v^{\e}(y)|_{L^\infty(\Omega^\e)}+|D_yv^{\e}(y)|_{L^\infty(\Omega^\e)}\leq C,
		\end{equation*}
		where $C$ is a universal constant and independent of $\e$. It implies that
		\begin{equation*}
		|D_xu_\e(x)|\leq C\e^{-1}.
		\end{equation*}
		Let $x_0$ be the boundary point such that
		\begin{equation*}
		|x_0-x_{in}|=\mathrm{dist}(x_{in},\partial\Omega).
		\end{equation*}
		We get that $|x_0-x_{in}|=o(\e)$ from $\lim_{\e\to0}\frac{l_{\e}}{\e}=0$, then
		\begin{equation*}
		|u_\e(x_0)-u_\e(x_{in})|\leq C|D_xu_\e||x_{in}-x_0|\leq C\e^{-1}|x_{in}-x_0|=o_{\e}(1),
		\end{equation*}
		which implies that $\lim_{\e\to0}u_\e(x_{in})=u_0$. This proves the conclusion of case (1) in Corollary~\ref{th1.3}.
		
		Case (2): $\lim_{\e\to0}\frac{\ell_\e}{\e}=L$. In this case, we first show that $\lim_{\e\to0}u_\e(x_{in})>0$. Indeed, by \eqref{3.2-5} and $\lim\limits_{\e\to0}l_{\e}/l=L$, we have
		$$\lim_{\e\to0}u_\e(x_{in})\geq b_5e^{-b_6L}>0.$$
		To show $\lim_{\e\to0}u_\e(x_{in})<u_0$, we claim that
		\begin{equation}
		\label{3.4-4}
		u_\e(x)\leq u_0e^{-b_9\frac{\mathrm{dist}(x,\partial\Omega)}{\e}}\quad\mathrm{in}\quad \Omega_\delta^c
		\end{equation}
		for some suitable positive constant $b_9,$ where $\Omega_\delta^c$ is defined in \eqref{3.com-set}. Let $L_1$ be defined in \eqref{3.2-3} and $u_{\e,b}$ be the solution of the following equation
		\begin{equation*}
		\begin{cases}
		\e^2\Delta V=L_1^2V\quad &\mathrm{in}~\Omega,\\
		V=u_0 &\mathrm{on}~\partial\Omega.
		\end{cases}
		\end{equation*}
		By maximum principle, we get that
		\begin{equation*}
		u_\e\leq u_{\e,b}.
		\end{equation*}
		Same as \eqref{3.2-asy}, we have
		\begin{equation*}
		|u_{\e,b}|\leq Ce^{-L_1\frac{\mathrm{dist}(x,\partial\Omega)}{\e}}\quad
		\mathrm{in}\quad\Omega.
		\end{equation*}
		Now we prove that
		\begin{equation*}
		u_{\e,b}(x)\leq u_0e^{-b_{10}\frac{\mathrm{dist}(x,\partial\Omega)}{\e}}\quad\mathrm{in}\quad \Omega_\delta^c
		\end{equation*}
		for some suitable positive constant $b_{10}$. We define $v_{\delta}$ by
		\begin{equation*}
		v_{\delta}=u_0e^{-\tau\frac{\mathrm{dist}(x,\partial\Omega)}{\e}}
		\end{equation*}
		with $\tau$ to be determined. On $\partial\Omega_\delta^c\setminus\partial\Omega$, we choose $\tau<L_1$ small enough such that
		\begin{equation*}
		u_0e^{-\tau\frac{\delta}{\e}}\geq Ce^{-L_1\frac{\delta}{\e}}.
		\end{equation*}
		Therefore
		\begin{equation}
		\label{3.4-11}
		u_{\e,b}\leq v_{\delta}\quad \mathrm{on}\quad \partial\Omega_\delta^c.
		\end{equation}
		By a direct computation, we have
		\begin{equation*}
		\e^2\Delta v_{\delta}-L_1^2v_{\delta}=-(L_1^2-\tau^2-\e \tau H_{\Gamma_z(y)})v_\delta.
		\end{equation*}
		For sufficiently small $\e$, we have
		$$\e^2\Delta v_\delta-L_1^2 v_\delta\leq 0.$$
		With \eqref{3.4-11} and the standard comparison argument, we get
		\begin{equation*}
		u_\e\leq u_{\e,b}\leq v_{\delta}\quad \mathrm{in}\quad \Omega_\delta^c.
		\end{equation*}
		Choosing $b_9=b_{10}=\tau$, we derive the claim \eqref{3.4-4}. As a result, we have
		\begin{equation*}
		\lim_{\e\to0}u_\e(x_{in})\leq u_0e^{-b_9L}<u_0.
		\end{equation*}
		Hence, we get the second conclusion.
		
		Case (3): $\lim_{\e\to0}\frac{\ell_\e}{\e}=+\infty$. The conclusion for this case is a direct consequence of \eqref{3.2-5}. Thus, we complete the proof.
	\end{proof}

	\section{Proof of Theorem~\ref{mainthm}}
	\subsection{Refined estimates of $\meps$}\label{sect=pro-m}
	We remark that \eqref{eq3}--\eqref{bd3} does not have a variational structure, and the nonlocal coefficient $\meps$ depends on the unknown solution $\bigu$. Hence, the variational approach and the standard method of matched asymptotic expansions~\cite{GJT2016,JPT2017} for singularly perturbed elliptic equations cannot be applied directly to our problem. On the other hand, as $\eps$ goes to zero, $\meps$ tends to a positive constant $\frac{m}{\omega_N}$. This enables us get the precise leading-order term of $\bigu$ near the boundary which encapsulate many useful properties of  $\bigu$ and hence motivates us to establish the precise leading order term of $\meps-\frac{m}{\omega_N}$ for small $\eps>0$. Based on a technical analysis developed in \cite[Theorem~4.1(III)]{L2019} and \cite[Lemma~4.1]{L2019-s} and Poho\v{z}aev-type identity for \eqref{eq3}--\eqref{bd3} (see Lemma~\ref{lem-2}), we gradually derive the zeroth and first order terms of $\meps$ (see Proposition~\ref{pro-m}) in the following.
	
	{By the arguments in section 4, we obtain that there exist positive constants $C_1$ and $M_1$ independent of $\e$, such that
		\begin{equation}
		\label{u-est1}
		0<\ueps(x)\leq{C_1}e^{-\frac{M_1}{\eps}\mathrm{dist}(x,\partial\Omega)}\quad\mathrm{for}\quad x\in\overline{\Omega},
		\end{equation}}
	which of course implies
	\begin{align}\label{u-est2}
	0<\bigu(r)\leq{C}_1e^{-\frac{M_1}{\eps}(R-r)},\,\,r\in[0,R].
	\end{align}
	Then by the Dominant Convergence Theorem, we have from \eqref{coef} that
	\begin{align}\label{mu-asy}
	\lim_{\eps\rightarrow0}\meps=\frac{m}{\omega_N}=\frac{m}{\alpha(N)R^N}:=\wmr.
	\end{align}
	With simple calculations,  we find the equation \eqref{eq3} can be transformed into an integro-ODEs
	\begin{align}\label{0710-id3}
	\frac{\eps^2}{2}\bigu'^2(r)+\eps^2\int_{\frac{R}{2}}^r\frac{N-1}{s}\bigu'^2(s)\,\dss={\meps}{F(\bigu(r))}+\mathtt{K}_{\eps},\,\,r\in[0,R],
	\end{align}
	where $\mathtt{K}_{\eps}$ is a constant depending on $\eps$. The equation \eqref{0710-id3} plays a crucial role in studying the asymptotic behavior of $\bigu$ near the boundary. To obtain the refined asymptotics of the nonlocal coefficient $\meps$, we first derive some asymptotic estimates on $\bigu(r)$.
	\begin{lemma}\label{lem-1}
		There exist positive constants $C_2$ and $M_2$ independent of $\eps$ such that, as $0<\eps\ll1$,
		\begin{align}
		|\mathtt{K}_{\eps}|\leq&\,{C}_2e^{-\frac{M_2}{\eps}R},\label{k-0}\\
		0<\left(\frac{r}{R}\right)^{N-1}\bigu'(r)\leq&\frac{C_2}{\eps}e^{-\frac{M_2}{\eps}(R-r)},\,\,r\in(0,R],\label{gra-est-u}
		\end{align}
		where $\mathtt{K}_{\eps}$ is defined in \eqref{0710-id3}.
		Moreover, there holds
		\begin{align}\label{crucial-estimate}
		\lim_{\eps\rightarrow0}\sup_{r\in[0,R]}\left|\sqrt{\eps}\bigu'(r)-\sqrt{\frac{2\wmr}{\eps}F(\bigu(r))}\right|<\infty.
		\end{align}
	\end{lemma}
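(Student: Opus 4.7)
The plan is to derive two integral identities from \eqref{eq3} and use them in succession. Multiplying \eqref{eq3} by $r^{N-1}$ and integrating from $0$ to $r$ with $\bigu'(0)=0$ gives the first-order identity
\begin{equation*}
\eps^2 r^{N-1}\bigu'(r) = \meps \int_0^r s^{N-1} f(\bigu(s))\,\dss;
\end{equation*}
the strong maximum principle applied to $\bigu$ (which satisfies $\Delta\bigu - c(x)\bigu = 0$ with $c\ge 0$ and $\bigu=u_0>0$ on $\partial\Omega$) implies $\bigu > 0$ on $[0,R)$, so the right-hand side is strictly positive for $r>0$, giving $\bigu'>0$ and monotonicity. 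Multiplying \eqref{eq3} by $\bigu'$ and integrating from $R/2$ to $r$ yields the energy identity \eqref{0710-id3} with the explicit formula $\mathtt{K}_{\eps} = \frac{\eps^2}{2}\bigu'^2(R/2) - \meps F(\bigu(R/2))$.

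For \eqref{k-0}: the bound \eqref{u-est2} gives $\bigu(R/2) \le C_1 e^{-M_1 R/(2\eps)}$, and $F(s) \sim s^2/2$ near the origin yields $F(\bigu(R/2)) = O(e^{-M_1 R/\eps})$. The first identity at $r=R/2$, combined with $f(\bigu(s))\le C e^{-M_1(R-s)/\eps}$ and $R-s \ge R/2$ on $[0,R/2]$, gives $\bigu'(R/2) = O(\eps^{-2} e^{-M_1 R/(2\eps)})$, so the quadratic term in $\mathtt{K}_{\eps}$ is also exponentially small (polynomial factors absorbed into a slightly smaller exponent). For \eqref{gra-est-u}, applying the first identity with $s^{N-1} \le R^{N-1}$ and $\int_0^r e^{-M_1(R-s)/\eps}\,\dss = O(\eps\, e^{-M_1(R-r)/\eps})$ gives the claim at once.

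For \eqref{crucial-estimate}, set $a:=\sqrt{\eps}\,\bigu'(r)$ and $b:=\sqrt{2\wmr F(\bigu(r))/\eps}$; dividing \eqref{0710-id3} by $\eps$ and rearranging yields
\begin{equation*}
a^2 - b^2 = \frac{2(\meps - \wmr) F(\bigu)}{\eps} + \frac{2\mathtt{K}_{\eps}}{\eps} - 2\eps\int_{R/2}^r \frac{N-1}{s}\bigu'^2(s)\,\dss.
\end{equation*}
A direct computation from \eqref{coef} using $e^{\bigu}-1 \le e^{u_0}\bigu$ and \eqref{u-est2} gives $|\meps - \wmr| \le C\int_0^R \bigu\, s^{N-1}\,\dss = O(\eps)$, making the first term on the right $O(1)$; the second is $o(1)$ by \eqref{k-0}. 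Splitting the third term at $R/2$, on $[R/2,R]$ one uses \eqref{gra-est-u} (with $(R/s)^{N-1}\le 2^{N-1}$) to bound $\bigu'^2(s) \le C\eps^{-2}e^{-2M_2(R-s)/\eps}$, contributing $O(1)$; on $[0,R/2]$ the first identity produces the sharper estimate $\bigu'(s) \le (C\meps s/\eps^2)e^{-M_1(R-s)/\eps}$, which cancels the $1/s$ singularity and yields an exponentially small contribution. Thus $a^2 - b^2 = O(1)$ uniformly on $[0,R]$, and treating separately $b \ge 1$ (so $|a-b| = |a^2-b^2|/(a+b) = O(1)$) and $b < 1$ (so $a^2 \le 1 + O(1)$ and $|a-b| \le a+b = O(1)$) yields the desired uniform bound. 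The main obstacle is obtaining the rate $|\meps - \wmr| = O(\eps)$ a priori --- using only \eqref{u-est2} and not circularly via Proposition \ref{pro-m} --- together with the splitting argument needed to control the $1/s$-singular remainder on the interior, both handled by playing the two integral identities against each other.
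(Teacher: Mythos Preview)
Your proof is correct and in places tighter than the paper's, but the route to \eqref{gra-est-u} is genuinely different. The paper differentiates once more to obtain the second-order inequality $\eps^2(r^{N-1}\bigu')'' \ge \widetilde{M}_2\, r^{N-1}\bigu'$ and applies a comparison principle, giving $r^{N-1}\bigu'(r)\le R^{N-1}\bigu'(R)e^{-\sqrt{\widetilde{M}_2}(R-r)/\eps}$; this forces a bootstrap (bound $\bigu'(R/2)$ in terms of $\bigu'(R)$, feed into \eqref{0710-id3} at $r=R$, solve for $\bigu'(R)$, then re-insert). You instead read off \eqref{gra-est-u} directly from the first-order identity $\eps^2 r^{N-1}\bigu'(r)=\meps\int_0^r s^{N-1}f(\bigu(s))\,\dss$ together with $f(\bigu(s))\le e^{u_0}C_1 e^{-M_1(R-s)/\eps}$ from \eqref{u-est2}; one integration yields the exponential decay with $M_2=M_1$ and the bound on $\meps$ (trivial from $0<\bigu\le u_0$) closes the argument. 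This is more elementary and avoids the bootstrap entirely; as a bonus, using $F(s)\sim s^2/2$ near~$0$ gives you $|\mathtt{K}_\eps|=O(e^{-M_1R/\eps})$ rather than the paper's $O(e^{-M_1R/(2\eps)})$.

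For \eqref{crucial-estimate} the two arguments rejoin: both obtain $|\meps-\wmr|=O(\eps)$ directly from \eqref{coef} and \eqref{u-est2}, then bound $a^2-b^2$ uniformly via \eqref{0710-id3}. On $[0,R/2]$ the paper simply observes that both $\sqrt{\eps}\bigu'$ and $\sqrt{2\wmr F(\bigu)/\eps}$ tend to zero; you instead stay with the energy identity and handle the $1/s$-singular remainder by the sharper bound $\bigu'(s)\le C\meps s\eps^{-2}e^{-M_1(R-s)/\eps}$ (again from the first identity), which cancels the $1/s$. Your wording ``splitting the third term at $R/2$'' is slightly misleading---the integral already starts at $R/2$, and what you are really doing is a case split on the location of $r$---but the estimates are right, and your treatment is in fact more explicit near $r=0$ than the paper's one-line appeal to \eqref{gra-est-u}, which by itself does not control $\bigu'$ uniformly as $r\to0$.
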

	\begin{proof}
		Multiplying \eqref{eq3} by $r^{N-1}$, we obtain $\eps^2(r^{N-1}\bigu'(r))'={\meps}r^{N-1}f(\bigu)>0$. Hence, $r^{N-1}\bigu'(r)$ is strictly increasing with respect to $r$ due to the fact $0<\bigu(r)\leq u_0$. Since $\bigu'(0)=0$, we immediately obtain $\bigu'(r)>0$ for $r\in(0,R]$, which gives the left-hand side of \eqref{gra-est-u}.

		By \eqref{eq3} and \eqref{mu-asy}, one may check that, as $0<\eps\ll1$,
		\begin{align}\label{0710-id1}
		\eps^2(r^{N-1}\bigu')''={\meps}\left((N-1)r^{N-2}f(\bigu)+r^{N-1}f'(\bigu)\bigu'\right)\geq\widetilde{M}_2r^{N-1}\bigu',\,\,r\in[0,R],
		\end{align}
		where $\widetilde{M}_2$ is a positive constant close to $m$. Here we have used the fact that $\bigu'\geq0$, $f(\bigu)\geq0$ and
		$f'(\bigu)=(\bigu+1)e^{\bigu}\geq1$ to obtain the last inequality of \eqref{0710-id1}. Note also that $\bigu'(R)>\bigu'(0)=0$. Thus the standard comparison theorem applied to \eqref{0710-id1} shows
		\begin{align}\label{0710-id2}
		r^{N-1}\bigu'(r)\leq{R}^{N-1}\bigu'(R)e^{-\frac{\sqrt{\widetilde{M}_2}}{\eps}(R-r)},\,\,r\in[0,R].
		\end{align}
		Let us now estimate $\bigu'(R)$. By \eqref{0710-id3} and \eqref{0710-id2}, we have
		\begin{align}\label{0710-id5}
		0\leq\meps{F(\bigu(\frac{R}{2}))}+\mathtt{K}_{\eps}=\frac{\eps^2}{2}\bigu'^2(\frac{R}{2})\leq\eps^22^{2N-3}\bigu'^2(R)e^{-\frac{\sqrt{\widetilde{M}_2}}{\eps}R},
		\end{align}
		and for $r\in[\frac{R}{2},R]$,
		\begin{align}\label{0710-id5-0}
		\eps^2\int_{\frac{R}{2}}^r\frac{N-1}{s}\bigu'^2(s)\leq
		\frac{2^{2N-1}\eps^2}{R}(N-1)\bigu'^2(R)\int_{\frac{R}{2}}^Re^{-\frac{2\sqrt{\widetilde{M}_2}}{\eps}(R-s)}\leq
		C_3\eps^3\bigu'^2(R),
		\end{align}
		where $C_3$ is a positive constant independent of $\eps$. On the other hand, by \eqref{fts} and \eqref{u-est2}, we find
		\begin{align}\label{0710-id6}
		0\leq{F}(\bigu(\frac{R}{2}))\leq\left(\max_{r\in[0,R]}f(\bigu(r))\right)\bigu(\frac{R}{2})
		\leq C_4e^{-\frac{M_1}{2\eps}R},
		\end{align}
		where $C_4={u_0}e^{u_0}C_1$. Hence, by \eqref{mu-asy}, \eqref{0710-id5} and  \eqref{0710-id6} we obtain
		\begin{align}\label{0710-id7}
		|\mathtt{K}_{\eps}|\leq{C}_5\Big(e^{-\frac{M_1}{2\eps}R}+\eps^2\bigu'^2(R)e^{-\frac{\sqrt{\widetilde{M}_2}}{\eps}R}\Big),
		\end{align}
		where ${C}_5$ is a positive constant independent of $\eps$. As a consequence, by \eqref{0710-id3} and \eqref{0710-id7}, for sufficiently small $\eps>0$, we arrive at $\frac{\eps^2}{2}\bigu'^2(R)\leq\meps{F(\bigu(R))}+\mathtt{K}_{\eps}\leq2{\wmr}F(u_0)+C_5\eps^2\bigu'^2(R)e^{-\frac{\sqrt{\widetilde{M}_2}}{\eps}R}$.
		Since $e^{-\frac{\sqrt{\widetilde{M}_2}}{\eps}R}\ll1$, we get
		\begin{align}\label{0710-id9}
		0<\bigu'(R)\leq\frac{2}{\eps}\sqrt{{\wmr}F(u_0)}\,\,\mathrm{as}\,\,0<\eps\ll1.
		\end{align}
		Hence, \eqref{k-0} is obtained by \eqref{0710-id7} and \eqref{0710-id9}. The right-hand inequality of \eqref{gra-est-u} thus follows from \eqref{0710-id2} and \eqref{0710-id9}, where we set $C_2=\max\{C_5,2\sqrt{{\wmr}F(u_0)},4C_5{\wmr}F(u_0)\}$ and $M_2=\min\{\frac{M_1}{2},\sqrt{\widetilde{M}_2}\}$.

		It remains to prove \eqref{crucial-estimate}. Firstly, we give an estimate of $\meps-{\wmr}$ with respect to small $\eps>0$. Since $0<\bigu\leq{u_0}$, together with \eqref{u-est2} gives
		\begin{align*}
		\left|\frac{N}{R^N}\int_0^Re^{\bigu(s)}s^{N-1}\dss-1\right|=~&\left|\frac{N}{R^N}\int_0^R\left(e^{\bigu(s)}-1\right)s^{N-1}\dss\right|\notag\\
		\leq~&\frac{N(e^{u_0}-1)}{Ru_0}\int_0^R\bigu(s)\,\dss
		\leq\frac{N(e^{u_0}-1)C_1}{Ru_0M_1}\eps.
		\end{align*}
		Along with \eqref{coef} and \eqref{mu-asy}, one may check that
		\begin{align}\label{2day-2}
		|\meps-{\wmr}|=\wmr\left|\left(\frac{N}{R^N}\int_0^Re^{\bigu(s)}s^{N-1}\dss\right)^{-1}-1\right|\leq\frac{2{\wmr}N(e^{u_0}-1)C_1}{Ru_0M_1}\eps,\,\,\mathrm{as}\,\,0<\eps\ll1.
		\end{align}
		Combining \eqref{2day-2} with \eqref{0710-id3}, \eqref{0710-id5-0} and \eqref{0710-id9}, we arrive at, for $r\in[\frac{R}{2},R]$,
		\begin{align}\label{2day-3}
		\left|{\eps}\bigu'^2(r)-\frac{2{\wmr}F(\bigu(r))}{\eps}\right|\leq2\eps\int_{\frac{R}{2}}^r\frac{N-1}{s}\bigu'^2(s)\,\dss+\frac{2}{\eps}|\meps-{\wmr}|+\frac{2}{\eps}|\mathtt{K}_{\eps}|\leq{C}_6,
		\end{align}
		where $C_6$ is a positive constant independent of $\eps$. In particular, due to $\bigu'\geq0$ and $F(\bigu)\geq0$, \eqref{2day-3} implies
		\begin{align}\label{0716-f1}
		\Big|\sqrt{\eps}\bigu'(r)-\sqrt{\frac{2{\wmr}F(\bigu(r))}{\eps}}\Big|\leq\sqrt{C_6},\,\,\mathrm{for}\,\,r\in[\frac{R}{2},R].
		\end{align}
		On the other hand, by \eqref{u-est2} and \eqref{gra-est-u}, it is easy to see that
		\begin{align}\label{0716-f2}
		\sqrt{\eps}\bigu'(r)-\sqrt{\frac{2{\wmr}F(\bigu(r))}{\eps}}\stackrel{\eps\rightarrow0}{\longrightarrow}0\,\,\mathrm{uniformly\,\,in}\,\,[0,\frac{R}{2}].
		\end{align}
		Therefore, \eqref{crucial-estimate} follows from \eqref{0716-f1}--\eqref{0716-f2} and the proof of Lemma~\ref{lem-1} is complete.
	\end{proof}
	
	Setting $r=R$ in \eqref{crucial-estimate} and using $\bigu(R)=u_0$, we obtain
	\begin{align}\label{sec-es}
	\lim_{\eps\rightarrow0}{\eps}\bigu'(R)=\sqrt{{2{\wmr}}F(u_0)}
	\end{align}
	which gives the precise leading order term of $\bigu(R)$ as $0<\eps\ll1$. Note also from \eqref{2day-2} that ${\eps}^{-1}(\meps-{\wmr})$ is bounded for $0<\eps\ll1$. To further exploit ${\eps}^{-1}(\meps-{\wmr})$ so that we can get its precise leading order term, let us introduce the following approximation which essentially comes from the \textbf{Poho\v{z}aev-type identity} applied to \eqref{eq3}--\eqref{bd3}. Moreover, this result gives a relation between the second order term of $\meps$ and asymptotics of $\bigu$.
	
	\begin{lemma}\label{lem-2}
		As $0<\eps\ll1$, there holds
		\begin{align}\label{poho-mm}
		\frac{\meps-{\wmr}}{\eps}=-\frac{N}{R}\sqrt{2{\wmr}F(u_0)}+\eps\int_{\frac{R}{2}}^Rg(r)\bigu'^2(r)\,\dr+o_{\eps}(1),
		\end{align}
		where
		\begin{align}\label{poho-gg}
		g(r)=\frac{N-1}{r}-\frac{N-2}{2R^N}r^{N-1}.
		\end{align}
	\end{lemma}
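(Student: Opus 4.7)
The plan is to derive a Pohož\v{a}ev-type identity for the radial ODE \eqref{eq3}, combine it with \eqref{0710-id3} at $r=R$ to produce a tractable expression for $\int_0^R r^{N-1}F(\bigu)\,\dr$, and then insert this expression into an expansion of $\meps-\wmr$ coming from the defining formula \eqref{coef} together with the elementary identity $e^s - 1 = f(s) - F(s)$. The leading term $-\tfrac{N}{R}\sqrt{2\wmr F(u_0)}$ will emerge from $\eps\bigu'(R)\to\sqrt{2\wmr F(u_0)}$ via \eqref{sec-es}, and the boundary-layer integral $\eps\int_{R/2}^R g\bigu'^2\,\dss$ will emerge from the Pohož\v{a}ev-based analysis.

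For the Pohož\v{a}ev identity, I would multiply the divergence form $\eps^2(r^{N-1}\bigu')' = \meps r^{N-1}f(\bigu)$ by $r\bigu'$ and integrate on $[0,R]$. Integration by parts on the left (using $\bigu'(0)=0$) yields $\eps^2\bigl(\tfrac{R^N}{2}\bigu'(R)^2 + \tfrac{N-2}{2}\int_0^R r^{N-1}\bigu'^2\,\dr\bigr)$; on the right, rewriting $f(\bigu)\bigu' = \tfrac{d}{dr}F(\bigu)$ and integrating by parts again gives $\meps R^N F(u_0) - \meps N\int_0^R r^{N-1}F(\bigu)\,\dr$. Subtracting $R^N$ times \eqref{0710-id3} at $r=R$ then cancels $\meps R^N F(u_0)$ and $\tfrac{\eps^2 R^N}{2}\bigu'(R)^2$; dividing by $R^N$ and discarding the exponentially small pieces $\int_0^{R/2} r^{N-1}\bigu'^2\,\dr$ (controlled by \eqref{gra-est-u}) and $\mathtt{K}_\eps$ (controlled by \eqref{k-0}) produces the intermediate identity
\begin{equation*}
\tfrac{\meps N}{R^N}\int_0^R r^{N-1}F(\bigu)\,\dr = \eps^2\int_{R/2}^R g(s)\bigu'^2(s)\,\dss + O(e^{-C/\eps}).
\end{equation*}

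Next, setting $I_\eps := \int_0^R e^{\bigu(s)}s^{N-1}\,\dss$ and $J_\eps := \int_0^R(e^{\bigu(s)}-1)s^{N-1}\,\dss$, I would write $\meps - \wmr = -\wmr J_\eps/I_\eps$ with $I_\eps = R^N/N + O(\eps)$ (this follows from \eqref{u-est2} since $e^s - 1 \leq s e^{u_0}$ on $[0,u_0]$). The pointwise identity $e^s - 1 = f(s) - F(s)$ (both sides vanish at $s=0$ and have derivative $e^s$) splits $J_\eps$, while integrating the divergence form once over $[0,R]$ gives $\int_0^R r^{N-1}f(\bigu)\,\dr = \eps^2 R^{N-1}\bigu'(R)/\meps$. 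Dividing by $\eps$ and substituting the intermediate identity yields
\begin{equation*}
\tfrac{\meps-\wmr}{\eps} = -\tfrac{N}{R}\tfrac{\wmr}{\meps}\eps\bigu'(R)(1+O(\eps)) + \tfrac{\wmr}{\meps}\eps\int_{R/2}^R g(s)\bigu'^2(s)\,\dss\,(1+O(\eps)) + o_\eps(1),
\end{equation*}
and using \eqref{sec-es} ($\eps\bigu'(R)\to\sqrt{2\wmr F(u_0)}$) and \eqref{2day-2} ($\wmr/\meps = 1 + O(\eps)$) to simplify the first term to $-\tfrac{N}{R}\sqrt{2\wmr F(u_0)} + o_\eps(1)$ delivers \eqref{poho-mm}.

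The main obstacle is controlling the $(1+O(\eps))\tfrac{\wmr}{\meps}$ factor in the second term: to conclude that the multiplicative correction contributes only $o_\eps(1)$, I need the crucial bound $\eps\int_{R/2}^R g(s)\bigu'^2(s)\,\dss = O(1)$ rather than a weaker polynomial-in-$1/\eps$ estimate. This bound follows from \eqref{crucial-estimate}, which upgrades to $\eps^2\bigu'^2 \leq 4\wmr F(\bigu) + O(\eps)$ pointwise, combined with $\int_{R/2}^R F(\bigu)\,\dss = O(\eps)$ obtained from the exponential pointwise bound \eqref{u-est2} together with the elementary inequality $F(s) \leq s\,f(s)\leq u_0 e^{u_0}s$. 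Only after confirming this $O(1)$ bound do all the error contributions from the $(1+O(\eps))$ factors, the exponentially small tails, and the divergence-type remainders collapse into a single $o_\eps(1)$ residue on the right-hand side of \eqref{poho-mm}.
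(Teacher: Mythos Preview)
Your proof is correct and the underlying Poho\v{z}aev mechanism is the same as the paper's. There is, however, one organizational difference worth noting. You extract $\meps-\wmr$ via $\meps-\wmr=-\wmr J_\eps/I_\eps$ and then approximate $I_\eps=R^N/N+O(\eps)$ and $\wmr/\meps=1+O(\eps)$; this produces the multiplicative $(1+O(\eps))$ factors in front of the integral $\eps\int_{R/2}^R g\,\bigu'^2\,\dr$, and you correctly recognize that closing the argument then requires the auxiliary bound $\eps\int_{R/2}^R g\,\bigu'^2\,\dr=O(1)$ (which you supply via \eqref{crucial-estimate} and \eqref{u-est2}). The paper avoids this detour entirely: it uses the \emph{exact} nonlocal constraint $\meps\int_0^R e^{\bigu}r^{N-1}\,\dr=\frac{R^N}{N}\wmr$ to obtain the exact identity
\[
\meps\int_0^R F(\bigu)r^{N-1}\,\dr=\frac{R^N}{N}(\meps-\wmr)+\eps^2R^{N-1}\bigu'(R),
\]
so that $\meps-\wmr$ appears with no multiplicative correction and the $O(1)$ bound is never needed. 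Equivalently, had you used $\meps-\wmr=-\frac{N\meps}{R^N}J_\eps$ (which follows from $\meps I_\eps=\frac{R^N}{N}\wmr$) instead of $-\wmr J_\eps/I_\eps$, your computation would collapse to the paper's. The other structural difference --- that the paper integrates \eqref{0710-id3} against $r^{N-1}$ over $[0,R]$ whereas you multiply the divergence form by $r\bigu'$ and then subtract $R^N$ times \eqref{0710-id3} at $r=R$ --- is purely cosmetic: both routes produce the same intermediate expression for $\meps\int_0^R r^{N-1}F(\bigu)\,\dr$ in terms of $\eps^2\int_{R/2}^R g\,\bigu'^2\,\dr$ up to exponentially small errors.
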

	
	\begin{proof}
		Multiplying \eqref{0710-id3} by $r^{N-1}$ and integrating the expression over the interval $[0,R]$, we then have
		
		\begin{align}\label{0710-id3++}
		\underbrace{\frac{\eps^2}{2}\int_0^R\bigu'^2(r)r^{N-1}\dr+\eps^2\int_0^Rr^{N-1}\int_{\frac{R}{2}}^r\frac{N-1}{s}\bigu'^2(s)\,\dss\dr}_{:=\,\mathrm{P}_{\mathrm{I}}}=\meps\int_0^R{F(\bigu(r))}r^{N-1}\dr+\frac{R^N}{N}\mathtt{K}_{\eps}.
		\end{align}
		Using integration by parts,
		\begin{align*}
		\int_0^Rr^{N-1}\int_{\frac{R}{2}}^r\frac{N-1}{s}\bigu'^2(s)\,\dss\dr=\frac{(N-1)R^N}{N}\int_{\frac{R}{2}}^R\frac{1}{r}\bigu'^2(r)\,\dr-\frac{N-1}{N}\int_0^Rr^{N-1}\bigu'^2(r)\,\dr,
		\end{align*}
		one finds that
		\begin{align}\label{711-p1}
		&\left|\mathrm{P}_{\mathrm{I}}-\eps^2\int_{\frac{R}{2}}^R\frac{1}{r}\left(\frac{N-1}{N}R^N-\frac{N-2}{2N}r^{N}\right)\bigu'^2(r)\,\dr\right|\notag\\
		&\hspace{36pt}=\eps^2\int_0^{\frac{R}{2}}\frac{N-2}{2N}r^{N-1}\bigu'^2(r)\,\dr\leq\frac{N-2}{2N}R^{N-1}C_2\eps{e}^{-\frac{M_2R}{2\eps}}\int_0^{\frac{R}{2}}\bigu'(r)\,\dr\\
		&\hspace{36pt}\leq{C}_7\eps{e}^{-\frac{M_2R}{2\eps}}.\notag
		\end{align}
		Here we have used \eqref{gra-est-u} to obtain $r^{N-1}\bigu'^2(r)\leq\frac{C_2}{\eps}R^{N-1}{e}^{-\frac{M_2R}{2\eps}}\bigu'(r)$
		for $r\in[0,\frac{R}{2}]$, which is used to deal with the inequality in the second line of \eqref{711-p1}.

		Next, we deal with the right-hand side of \eqref{0710-id3++}. By \eqref{fts}--\eqref{coef} and \eqref{sec-es}, one obtains
		\begin{align}\label{big-f}
		\meps\int_0^R{F(\bigu(r))}r^{N-1}\dr=&\,\meps\int_0^R\left(1-e^{\bigu}+f(\bigu(r))\right)r^{N-1}\dr\notag\\
		=&\,\frac{R^N}{N}\left(\meps-{\wmr}\right)+\eps^2R^{N-1}\bigu'(R)\\
		=&\,\frac{R^N}{N}\left(\meps-{\wmr}\right)+\eps\left({R}^{N-1}\sqrt{2{\wmr}F(u_0)}+o_{\eps}(1)\right).\notag
		\end{align}
		Here we have used identities ${\meps}f(\bigu(r))r^{N-1}=\eps^2\left(r^{N-1}\bigu'\right)'$ and $\meps\int_0^Re^{\bigu}r^{N-1}\mathrm{d}r=\frac{m}{N\alpha(N)}=\frac{R^N}{N}{\wmr}$ to get the second line of \eqref{big-f}. As a consequence,
		by \eqref{0710-id3++} and \eqref{big-f}, we have
		\begin{align}\label{0716-af}
		\mathrm{P}_{\mathrm{I}}=\eps\left({R}^{N-1}\sqrt{2{\wmr}F(u_0)}+o_{\eps}(1)\right)+\frac{R^N}{N}\left(\meps-{\wmr}+\mathtt{K}_{\eps}\right).
		\end{align}
		By \eqref{0710-id7}, \eqref{711-p1} and \eqref{0716-af},
		after making appropriate manipulations it yields
		\begin{equation}
		\begin{aligned}
		\label{p-meps=n}
		&\left|\frac{\meps-{\wmr}}{\eps}+\frac{N}{R}\sqrt{2{\wmr}F(u_0)}
		-\eps\int_{\frac{R}{2}}^R\left(\frac{N-1}{r}-\frac{N-2}{2R^N}r^{N-1}\right)\bigu'^2(r)\,\dr\right|\\
		&\leq\frac{|\mathtt{K}_{\eps}|}{\eps}+\frac{C_7N}{R^N}e^{-\frac{M_2R}{2\eps}}+o_{\eps}(1)\to0,
		\end{aligned}
		\end{equation}
		as $\eps\to0$.
		Therefore, \eqref{p-meps=n} implies \eqref{poho-mm} and the proof of Lemma~\ref{lem-2} is completed.
	\end{proof}
	
	We are now in a position to establish the precise leading order term of $\meps-{\wmr}$ for small $\eps>0$.
	\begin{proposition}[Refined estimate of ${\meps}$]\label{pro-m}
		As $0<\eps\ll1$,  the asymptotic expansion of ${\meps}$ with precise first two order terms involving the effect of curvature $R^{-1}$ is described as follows:
		\begin{align}\label{cmh}
		{\meps}={\wmr}+\eps\frac{N}{R}\sqrt{\wmr}\left(\hnu+o_{\eps}(1)\right)\,\,as\,\,0<\eps\ll1,
		\end{align}
		where $\hnu=-\sqrt{2F(u_0)}+\int_0^{u_0}\sqrt{\frac{F(t)}{2}}\,\mathrm{d}t$ defined in \eqref{c-0} depends mainly on the boundary value $u_0$ and is independent of $R$. Moreover, $\hnu<0$ is a strictly decreasing function of $u_0\in(0,\infty)$.
	\end{proposition}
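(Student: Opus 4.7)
The plan is to extract the precise leading order of the integral appearing in Lemma~\ref{lem-2}, namely
\[
\mathcal{I}_{\eps}:=\eps\int_{R/2}^{R}g(r)\bigu'^{2}(r)\,\dr,
\]
by passing to the intrinsic variable $u=\bigu(r)$ and invoking the near-eikonal estimate \eqref{crucial-estimate}. Once $\lim_{\eps\to 0}\mathcal{I}_{\eps}$ is identified, substitution into \eqref{poho-mm} will give \eqref{cmh} after elementary algebra. Finally, monotonicity and sign of $\hnu$ reduce to a one-variable calculus computation.

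The key computation proceeds as follows. Since $\bigu$ is strictly increasing on $[0,R]$ with $\bigu(R/2)\to 0$ exponentially fast and $\bigu(R)=u_0$, change of variables gives
\[
\int_{R/2}^{R}g(r)\bigu'^{2}(r)\,\dr=\int_{\bigu(R/2)}^{u_0}g(\bigu^{-1}(u))\,\bigu'(\bigu^{-1}(u))\,\mathrm{d}u.
\]
From \eqref{crucial-estimate} we have $\bigu'(\bigu^{-1}(u))=\eps^{-1}\sqrt{2\wmr F(u)}+O(\eps^{-1/2})$ uniformly in $u$. Because $g$ is bounded on $[R/2,R]$, multiplying by the prefactor $\eps$ yields
\[
\mathcal{I}_{\eps}=\int_{\bigu(R/2)}^{u_0}g(\bigu^{-1}(u))\sqrt{2\wmr F(u)}\,\mathrm{d}u+O(\sqrt{\eps}).
\]
Next I would pass to the limit $\eps\to 0$ in the remaining integral: by Theorem~\ref{th1.2} together with the monotonicity of $\bigu$, for every fixed $c\in(0,u_0)$ one has $\bigu^{-1}(u)\to R$ uniformly on $[c,u_0]$, so $g(\bigu^{-1}(u))\to g(R)=\frac{N}{2R}$; the integral over $[\bigu(R/2),c]$ is controlled by $\sup_{[R/2,R]}|g|\cdot\int_0^c\sqrt{2\wmr F(u)}\,\mathrm{d}u=O(c^{2})$ via $F(u)\lesssim u^{2}$ near $0$. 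Letting $\eps\to 0$ first and then $c\to 0$ gives
\[
\lim_{\eps\to 0}\mathcal{I}_{\eps}=\frac{N}{2R}\sqrt{2\wmr}\int_{0}^{u_0}\sqrt{F(u)}\,\mathrm{d}u=\frac{N}{R}\sqrt{\wmr}\int_{0}^{u_0}\sqrt{F(u)/2}\,\mathrm{d}u.
\]
Plugging this into \eqref{poho-mm} and factoring $\frac{N}{R}\sqrt{\wmr}$ from the two boundary contributions delivers precisely \eqref{cmh} with $\hnu$ as defined in \eqref{c-0}.

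The delicate step is the transition from the merely \emph{bounded} error in \eqref{crucial-estimate} to a genuinely \emph{vanishing} remainder for $\mathcal{I}_{\eps}$; the point is that the change of variables converts the weight $\bigu'$ into $\mathrm{d}u$, which absorbs one full power of $\eps^{-1}$ and reduces the pointwise error $O(\eps^{-1/2})$ to an integrated error $O(\sqrt{\eps})$. The quadratic error term one would have obtained by expanding $\bigu'^{2}$ directly (without changing variables) is of size $O(1)$ and thus fatal; so the change of variables is essential. For the last assertion, I would compute
\[
\frac{\mathrm{d}}{\mathrm{d}u_0}\hnu=\frac{F(u_0)-f(u_0)}{\sqrt{2F(u_0)}}=\frac{1-e^{u_0}}{\sqrt{2F(u_0)}}<0\quad\text{for } u_0>0,
\]
using $F(u_0)-f(u_0)=(u_0-1)e^{u_0}+1-u_0 e^{u_0}=1-e^{u_0}$; combined with $\hnu\big|_{u_0=0}=0$ this yields both the strict monotonicity in $u_0$ and the strict negativity of $\hnu$, completing the proof.
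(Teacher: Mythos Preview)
Your proposal is correct and follows essentially the same approach as the paper: both arguments combine Lemma~\ref{lem-2} with the near-eikonal estimate \eqref{crucial-estimate}, change variables $r\mapsto u=\bigu(r)$ to convert $\eps\int g\,\bigu'^2\,\dr$ into an integral of $g(\bigu^{-1}(u))\sqrt{2\wmr F(u)}\,\mathrm{d}u$, and then use that $\bigu^{-1}(u)\to R$ so $g(\bigu^{-1}(u))\to g(R)=\tfrac{N}{2R}$. The only cosmetic difference is that the paper localizes first in the $r$-variable via the split $[R/2,R-\sqrt{\eps}]\cup[R-\sqrt{\eps},R]$ before changing variables, whereas you change variables first and localize in the $u$-variable via the cutoff at level $c$; the sign/monotonicity computation for $\hnu$ is identical.
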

	\begin{proof}
		By Lemma~\ref{lem-2}, it suffices to obtain the precise leading order term of
		\begin{align}\label{716-4eps}
		\mathrm{P}_{\mathrm{I}\!\mathrm{I}}:=\eps\int_{\frac{R}{2}}^R\left(\frac{N-1}{r}-\frac{N-2}{2R^N}r^{N-1}\right)\bigu'^2(r)\,\dr.
		\end{align}
		Thanks to \eqref{gra-est-u}, we shall consider the decomposition of \eqref{716-4eps} as
		\begin{align}\label{0711-1002}
		\mathrm{P}_{\mathrm{I}\!\mathrm{I}}=\eps\left\{\int_{\frac{R}{2}}^{R-\sqrt{\eps}}+\int_{R-\sqrt{\eps}}^R\right\}\left(\frac{N-1}{r}-\frac{N-2}{2R^N}r^{N-1}\right)\bigu'^2(r)\,\dr.
		\end{align}
		In particular, we have
		\begin{align}\label{0711-1024}
		\eps&\,\left|\int_{\frac{R}{2}}^{R-\sqrt{\eps}}\left(\frac{N-1}{r}-\frac{N-2}{2R^N}r^{N-1}\right)\bigu'^2(r)\,\dr\right|\notag\\[-0.7em]&\\[-0.7em]
		&\hspace{12pt}\leq\frac{2\eps(N-1)}{R}\int_{\frac{R}{2}}^{R-\sqrt{\eps}}\bigu'^2(r)\,\dr\leq\frac{2^{2(N-1)}(N-1)C_2^2}{M_2R}e^{-\frac{2M_2}{\sqrt{\eps}}}.\notag
		\end{align}
		To deal with the second integral of $\mathrm{P}_{\mathrm{I}\!\mathrm{I}}$, let us set
		\begin{align*}
		\xi_{\eps}(r)=\left(\frac{N-1}{r}-\frac{N-2}{2R^N}r^{N-1}\right)-\frac{N}{2R},\,\,r\in[R-\sqrt{\eps},R].
		\end{align*}
		It is easy to get
		$\ds\sup_{r\in[R-\sqrt{\eps},R]}|\xi_{\eps}(r)|\leq{C}_8\sqrt{\eps}$.
		This along with \eqref{gra-est-u} immediately gives
		\begin{align}\label{0711-1108}
		\eps\left|\int_{R-\sqrt{\eps}}^R\xi_{\eps}(r)\bigu'^2(r)\,\dr\right|\leq{C}_9\sqrt{\eps}.
		\end{align}
		Here $C_8$ and $C_9$ are positive constants independent of $\eps$.

		On the other hand, by \eqref{crucial-estimate} we have
		\begin{align}\label{0711-1006}
		{\eps}\bigu'(r)=\sqrt{2{\wmr}F(\bigu(r))}+\sqrt{\eps}\gamma_{\eps}(r)\,\,\mathrm{with}\,\,\lim_{\eps\rightarrow0}\sup_{[0,R]}\left|\gamma_{\eps}(r)\right|<\infty.
		\end{align}
		Using \eqref{0711-1108} and \eqref{0711-1006}, one may check that
		\begin{align}\label{0711-1130}
		\eps\int_{R-\sqrt{\eps}}^R&\left(\frac{N-1}{r}-\frac{N-2}{2R^N}r^{N-1}\right)\bigu'^2(r)\,\dr\notag\\
		&\quad=\eps\frac{N}{2R}\int_{R-\sqrt{\eps}}^R\bigu'^2(r)\,\dr+\eps\int_{R-\sqrt{\eps}}^R\xi_{\eps}(r)\bigu'^2(r)\,\dr\notag\\
		&\quad=\frac{N}{2R}\int_{R-\sqrt{\eps}}^R\left(\sqrt{2{\wmr}F(\bigu(r))}+\sqrt{\eps}\gamma_{\eps}(r)\right)\bigu'(r)\,\dr+\eps\int_{R-\sqrt{\eps}}^R\xi_{\eps}(r)\bigu'^2(r)\,\dr\\
		&\quad=\frac{N}{2R}\int_{\bigu(R-\sqrt{\eps})}^{u_0}\sqrt{2{\wmr}F(t)}\,\mathrm{d}t+o_{\eps}(1)\notag\\
		&\quad=\frac{N}{2R}\int_{0}^{u_0}\sqrt{2{\wmr}F(t)}\,\mathrm{d}t+o_{\eps}(1).\notag
		\end{align}
		Here we stress that in the last two lines of \eqref{0711-1130}, we have verified
		\begin{align*}
		\left|\int_{R-\sqrt{\eps}}^R\sqrt{\eps}\gamma_{\eps}(r)\bigu'(r)\,\dr\right|\leq\sqrt{\eps}\sup_{[R-\sqrt{\eps},R]}|\gamma_{\eps}(r)|(\bigu(R)-\bigu(R-\sqrt{\eps}))\to0
		\end{align*}
		and
		\begin{align*}
		\int_0^{\bigu(R-\sqrt{\eps})}\sqrt{2{\wmr}F(t)}\,\mathrm{d}t\leq\sqrt{2{\wmr}F(u_0)}\bigu(R-\sqrt{\eps})\to0
		\end{align*}
		as $\eps\rightarrow0$ (by \eqref{u-est2}).
		As a consequence, by \eqref{0711-1002}, \eqref{0711-1024} and \eqref{0711-1130}, we obtain the precise leading order term of $\mathrm{P}_{\mathrm{I}\!\mathrm{I}}$,
		\begin{align}\label{0711-1130-ADD}
		\mathrm{P}_{\mathrm{I}\!\mathrm{I}}=\frac{N}{2R}\int_{0}^{u_0}\sqrt{2{\wmr}F(t)}\,\mathrm{d}t+o_{\eps}(1).
		\end{align}
		Finally, by \eqref{poho-mm}--\eqref{poho-gg} and \eqref{0711-1130-ADD}, we get
		\begin{align*}
		\frac{\meps-{\wmr}}{\eps}=-\frac{N}{R}\sqrt{2{\wmr}F(u_0)}+\mathrm{P}_{\mathrm{I}\!\mathrm{I}}+o_{\eps}(1)=\frac{N}{R}\sqrt{{\wmr}}\bigg(\underbrace{\int_{0}^{u_0}\sqrt{\frac{F(t)}{2}}\,\mathrm{d}t-\sqrt{2F(u_0)}}_{:=\hnu}+o_{\eps}(1)\bigg).
		\end{align*}
		This along with \eqref{mu-asy} gives \eqref{cmh}.
		
		It remains to prove
		\begin{align}\label{u_0-add}
		\hnu<0\,\,\mathrm{and}\,\,\frac{\mathrm{d}\pmb{\mathrm{J}}}{\mathrm{d}{u_0}}(u_0)<0\,\,\mathrm{for}\,\,u_0>0.
		\end{align}
		Indeed, by a simple calculation we get $\pmb{\mathrm{J}}(0)=0$ and
		\begin{align*}
		\frac{\mathrm{d}\pmb{\mathrm{J}}}{\mathrm{d}{u_0}}(u_0)=\frac{F(u_0)-f(u_0)}{\sqrt{2F(u_0)}}=\frac{1-e^{u_0}}{\sqrt{2F(u_0)}}<0,
		\end{align*}
		{which implies} \eqref{u_0-add}.
		Therefore, we complete the proof of
		Proposition~\ref{pro-m}.
	\end{proof}
	
	\begin{remark}
		Proposition~\ref{pro-m} also shows the effect of boundary value $u_0$ on $\meps$. Precisely speaking, let $R>0$ be fixed and $u_0\in[l_1,l_2]$, where $0<l_1<l_2<\infty$. Regarding $\meps$ as a function of $u_0$, we find that as $0<\eps\ll\frac{R}{|\pmb{\mathrm{J}}(l_1)|}\sqrt{\wmr}$,
		$\meps$ is strictly decreasing to $u_0\in[l_1,l_2]$, where $\pmb{\mathrm{J}}(l_1):=-\sqrt{2F(l_1)}+\int_0^{l_1}\sqrt{\frac{F(t)}{2}}\,\mathrm{d}t$.
	\end{remark}

	\subsection{Proof of Theorem~\ref{mainthm}}\label{proof-thm-u}
	We first establish the following result.
	\begin{lemma}\label{thm-u-lem-1}
		Let $\hnu$ be as defined in \eqref{c-0}. Then For each $j>0$ independent of $\eps$, we have
		\begin{align}\label{newu-0712}
		\lim_{\eps\rightarrow0}\sup_{r_{\eps}\in[R-j\eps,R]}&\left|\bigu'(r_{\eps})-\left\{\frac{\sqrt{2{\wmr}F(\bigu(r_{\eps}))}}{\eps}\right.\right.\notag\\[-0.8em]
		&\\[-0.8em]
		&\hspace{10pt}\left.\left.+\frac{1}{R}\left(N{\hnu}\sqrt{\frac{F(\bigu(r_{\eps}))}{2}}-(N-1)\int_0^{\bigu(r_{\eps})}\sqrt{\frac{F(t)}{F(\bigu(r_{\eps}))}}\,\mathrm{d}t\right)\right\}\right|=0.\notag
		\end{align}
	\end{lemma}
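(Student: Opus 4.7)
The plan is to evaluate \eqref{0710-id3} at $r=r_{\eps}$ and expand each term of the right-hand side to first order in $\eps$ before taking a square root. This gives
\begin{equation*}
\eps^2\bigu'^2(r_{\eps}) = 2\meps F(\bigu(r_{\eps})) + 2\mathtt{K}_{\eps} - 2\eps^2\int_{\frac{R}{2}}^{r_{\eps}}\frac{N-1}{s}\bigu'^2(s)\,\dss.
\end{equation*}
By Lemma \ref{lem-1}, $\mathtt{K}_{\eps}$ is exponentially small, and Proposition \ref{pro-m} yields $2\meps F(\bigu(r_{\eps})) = 2\wmr F(\bigu(r_{\eps})) + 2\eps\tfrac{N}{R}\sqrt{\wmr}\,\hnu F(\bigu(r_{\eps})) + o(\eps)$, so the whole argument reduces to a sharp first-order asymptotic for the integral term.

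The main technical step is to show that, uniformly in $r_{\eps}\in[R-j\eps,R]$,
\begin{equation*}
\eps^2\int_{\frac{R}{2}}^{r_{\eps}}\frac{N-1}{s}\bigu'^2(s)\,\dss = \eps\,\frac{N-1}{R}\int_0^{\bigu(r_{\eps})}\sqrt{2\wmr F(t)}\,\mathrm{d}t + o(\eps).
\end{equation*}
I would split the domain of integration as $[\tfrac{R}{2},R-\sqrt{\eps}]\cup[R-\sqrt{\eps},r_{\eps}]$; on the first piece the bound \eqref{gra-est-u} makes the contribution exponentially small. On the second piece I replace $1/s$ by $1/R$ (producing an $O(\sqrt{\eps})$ error per unit integrand on an interval of length at most $\sqrt{\eps}$), then invoke \eqref{crucial-estimate} in the form $\eps\bigu'(s)=\sqrt{2\wmr F(\bigu(s))}+\sqrt{\eps}\gamma_{\eps}(s)$ with $\gamma_{\eps}$ uniformly bounded, to peel off one factor of $\eps\bigu'(s)$. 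The remaining $\bigu'(s)\,\dss$ is then converted via the change of variable $t=\bigu(s)$, and the lower limit $\bigu(R-\sqrt{\eps})$ can be replaced by $0$ with exponentially small error thanks to \eqref{u-est2}.

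With the integral asymptotic established, substituting everything back and factoring out $2\wmr F(\bigu(r_{\eps}))$ yields $\eps^2\bigu'^2(r_{\eps}) = 2\wmr F(\bigu(r_{\eps}))[1+\eps A(r_{\eps})+o(\eps)]$ with
\begin{equation*}
A(r_{\eps}) = \frac{N\hnu}{R\sqrt{\wmr}} - \frac{N-1}{R\wmr F(\bigu(r_{\eps}))}\int_0^{\bigu(r_{\eps})}\sqrt{2\wmr F(t)}\,\mathrm{d}t.
\end{equation*}
The lower bound \eqref{3.2-5} applied radially gives $\bigu(r_{\eps})\geq b_5 e^{-b_6 j}>0$ uniformly on $[R-j\eps,R]$, so $F(\bigu(r_{\eps}))$ is bounded away from $0$ and $A(r_{\eps})$ stays uniformly bounded. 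Consequently the Taylor expansion $\sqrt{1+\eps A+o(\eps)}=1+\eps A/2+o(\eps)$ is valid, and a direct simplification of the algebraic factors produces the two first-order contributions $\tfrac{N\hnu}{R}\sqrt{F(\bigu(r_{\eps}))/2}$ and $-\tfrac{N-1}{R}\int_0^{\bigu(r_{\eps})}\sqrt{F(t)/F(\bigu(r_{\eps}))}\,\mathrm{d}t$ claimed in \eqref{newu-0712}. The principal obstacle is the uniformity of the $o(\eps)$ error in the integral asymptotic across the full range $r_{\eps}\in[R-j\eps,R]$: one must verify that neither the error from $1/s\mapsto 1/R$ nor that from \eqref{crucial-estimate} degenerates as $r_{\eps}\to R$, which is possible because the endpoint enters only through an integration limit and neither piece of the splitting deteriorates at the boundary.
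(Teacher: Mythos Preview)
Your proposal is correct and follows essentially the same route as the paper: evaluate the integro-ODE \eqref{0710-id3} at $r=r_{\eps}$, insert the refined expansion of $\meps$ from Proposition~\ref{pro-m}, handle the integral term by the same splitting $[\tfrac{R}{2},R-\sqrt{\eps}]\cup[R-\sqrt{\eps},r_{\eps}]$ used in \eqref{0711-1002}--\eqref{0711-1130}, and then take the square root using the uniform lower bound on $\bigu(r_{\eps})$ (which the paper records as \eqref{fi-0718} via Corollary~\ref{th1.3}-(ii), equivalent to your appeal to \eqref{3.2-5}). The paper compresses the integral step by simply citing the earlier computation, whereas you spell it out, but the argument is the same.
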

	\begin{proof}
		By Corollary \ref{th1.3}-(ii), we have
		\begin{align}\label{fi-0718}
		\lim_{\eps\rightarrow0}\inf_{r_{\eps}\in[R-j\eps,R]}\bigu(r_{\eps})>0.
		\end{align}
		Setting $r=r_{\eps}$ in \eqref{0710-id3}, using \eqref{cmh} and following the similar argument as in \eqref{0711-1002}--\eqref{0711-1130}, one may check that
		\begin{align}\label{0718-339}
		{\eps^2}\bigu'^2(r_{\eps})=&\,2\left({\meps}{F(\bigu(r_{\eps}))}-\eps^2\int_{\frac{R}{2}}^{r_{\eps}}\frac{N-1}{s}\bigu'^2(s)\,\dss+\mathtt{K}_{\eps}\right)\notag\\
		=&\,2\left(m+\eps\sqrt{{\wmr}}\left(\frac{N{\hnu}}{R}+o_{\eps}(1)\right)\right){F(\bigu(r_{\eps}))}\notag\\
		&\quad-2\eps\left(\frac{N-1}{R}+o_{\e}(1)\right)\left(\int_{\bigu(\frac{R}{2})}^{\bigu(r_{\eps})}\sqrt{2{\wmr}F(t)}\mathrm{d}t+o_{\eps}(1)\right)+2\mathtt{K}_{\eps}\\
		=&\,2{\wmr}F(\bigu(r_{\eps}))+\frac{2\eps\sqrt{{\wmr}}}{R}\left(N{\hnu}{F(\bigu(r_{\eps}))}-(N-1)\int_0^{\bigu(r_{\eps})}\sqrt{2F(t)}\,\mathrm{d}t+o_{\eps}(1)\right)\notag\\
		=&\,2{\wmr}F(\bigu(r_{\eps}))\left\{1+\frac{\eps}{R\sqrt{{\wmr}}}\left(N{\hnu}-(N-1)\int_0^{\bigu(r_{\eps})}\frac{\sqrt{2F(t)}}{F(\bigu(r_{\eps}))}\,\mathrm{d}t+o_{\eps}(1)\right)\right\}.\notag
		\end{align}
		Due to \eqref{0711-1006} and \eqref{fi-0718}, the asymptotic expansions in \eqref{0718-339} is uniformly in $[R-j\eps,R]$ as $0<\eps\ll1$. Since $\bigu'\geq0$, by \eqref{fi-0718} and \eqref{0718-339} we have
		\begin{align}\label{cha-0718}
		\bigu'(r_{\eps})=\frac{\sqrt{2{\wmr}F(\bigu(r_{\eps}))}}{\eps}\left\{1+\frac{\eps}{2R\sqrt{{\wmr}}}\left(N{\hnu}-(N-1)\int_0^{\bigu(r_{\eps})}\frac{\sqrt{2F(t)}}{F(\bigu(r_{\eps}))}\,\mathrm{d}t+o_{\eps}(1)\right)\right\}
		\end{align}
		uniformly in $[R-j\eps,R]$ as $0<\eps\ll1$. This gives \eqref{newu-0712} and completes the proof of Lemma~\ref{thm-u-lem-1}.
	\end{proof}
	
	By \eqref{0716-f1}, \eqref{fi-0718} and \eqref{cha-0718}, we have
	\begin{align}\label{0718-528}
	\left|\frac{\bigu'(r)}{\sqrt{2{\wmr}F(\bigu(r))}}-\frac{1}{\eps}\right|\leq{C_{10}(j,{R})},\,\,\mathrm{for}\,\,r\in[R-j\eps,R],
	\end{align}
	where $C_{10}(j,{R})$ (depending mainly on $j$ and ${R}$) is a positive constant independent of $\eps$. In particular, for $j>d_0$, let us integrate \eqref{0718-528} over $[r_{\eps}(d_0),R]$ with $r_{\eps}(d_0)=R-d_0\eps$, which results in
	\begin{align}\label{0718-545}
	\left|\int_{\bigu(r_{\eps}(d_0))}^{u_0}\frac{\mathrm{d}t}{\sqrt{2{\wmr}F(t)}}-d_0\right|\leq{C}_{10}(j,R)d_0{\eps}.
	\end{align}
	\textcolor[rgb]{0,0,0}{Moreover, let $\Phi$ denote the unique positive solution of the equation
		\begin{align}\label{phi-eq819}
		\begin{cases}
		-\Phi'(t)=\sqrt{2\rho_0F(t)},\,\,t>0,\\
		\Phi(0)=u_0,\,\,\Phi(\infty)=0.
		\end{cases}
		\end{align}
		Then for $d_0>0$, \eqref{phi-eq819} directly implies}
	\begin{align}\label{0718-d0}
	d_0=\int_{\Phi(d_0)}^{u_0}\frac{\mathrm{d}t}{\sqrt{2{\wmr}F(\Phi(t))}}.
	\end{align}
	This along with \eqref{0718-545} immediately yields $\int_{\bigu(r_{\eps}(d_0))}^{\Phi(d_0)}\frac{\mathrm{d}t}{\sqrt{2{\wmr}F(t)}}\stackrel{\eps\rightarrow0}{\longrightarrow}0$. Moreover,
	$\bigu(r_{\eps}(d_0))\stackrel{\eps\rightarrow0}{\longrightarrow}\Phi(d_0)$
	since $\frac{1}{\sqrt{2{\wmr}F(t)}}$ has a positive lower bound in $t\in[\Phi(d_0),u_0]$. As a consequence,
	\begin{align}\label{0718-6pm}
	\bigu(r_{\eps}(d_0))=\Phi(d_0)+L_{\eps}(d_0),\,\,\lim_{\eps\rightarrow0}L_{\eps}(d_0)=0.
	\end{align}

	\textcolor[rgb]{0,0,0}{On the other hand, by \eqref{phi-eq}, \eqref{mu-asy} and \eqref{phi-eq819} and the uniqueness of $\Psi$ and $\Phi$, we have
		$\Phi(t)=\Psi(\sqrt{\frac{\rho_0}{m}}t)$ with $\frac{\rho_0}{m}=\frac{1}{\alpha(N)R^N}$. Since $\Phi$ depends on $R$,  for the convenience of our next arguments, we shall denote
		\begin{align}\label{tmxi}
		\Phi(t):=\Psi^R(t)=\Psi(\frac{t}{\sqrt{\alpha(N)}R^{N/2}}).
		\end{align}}

	Then we are able to claim the following result.
	\begin{lemma}\label{lem-0718-6pm}
		As $0<\eps\ll1$,
		\begin{align}\label{leps-0718}
		\frac{L_{\eps}(d_0)}{\eps}=-\frac{\sqrt{2F(\Psi^R(d_0))}}{2R}\left(d_0N\hnu-\frac{N-1}{\sqrt{\rho_0}}{\jnu}+o_{\eps}(1)\right).
		\end{align}
	\end{lemma}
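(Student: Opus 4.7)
The plan is to bootstrap the refined gradient expansion \eqref{cha-0718} obtained in Lemma~\ref{thm-u-lem-1} into a refined expansion of $\bigu(r_\eps(d_0))$ itself, by integrating separation-of-variables--style from $r_\eps(d_0)$ up to $R$. The starting point is to rewrite \eqref{cha-0718} in the form
\begin{align}\label{plan-eq1}
\frac{\bigu'(r)}{\sqrt{2\wmr F(\bigu(r))}}=\frac{1}{\eps}+\frac{1}{2R\sqrt{\wmr}}\left(N\hnu-(N-1)\int_0^{\bigu(r)}\frac{\sqrt{2F(t)}}{F(\bigu(r))}\,\mathrm{d}t\right)+o_{\eps}(1),
\end{align}
uniformly on $[R-j\eps,R]$ for any fixed $j>d_0$. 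Integrating \eqref{plan-eq1} over $[r_{\eps}(d_0),R]$ and using the change of variable $t=\bigu(r)$ on the left together with $R-r_{\eps}(d_0)=d_0\eps$ and the identity $d_0=\int_{\Psi^R(d_0)}^{u_0}\frac{\mathrm{d}t}{\sqrt{2\wmr F(t)}}$ coming from \eqref{0718-d0}, the leading order $d_0$ on both sides will cancel and leave a relation between $\int_{\bigu(r_{\eps}(d_0))}^{\Psi^R(d_0)}\frac{\mathrm{d}t}{\sqrt{2\wmr F(t)}}$ and an $O(\eps)$ remainder.

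Next, I would handle the two sides separately. On the left, since $L_\eps(d_0)\to0$ by \eqref{0718-6pm} and since $\frac{1}{\sqrt{2\wmr F(t)}}$ is smooth and bounded near $t=\Psi^R(d_0)>0$, a Taylor expansion gives
\begin{align*}
\int_{\bigu(r_{\eps}(d_0))}^{\Psi^R(d_0)}\frac{\mathrm{d}t}{\sqrt{2\wmr F(t)}}=-\frac{L_{\eps}(d_0)}{\sqrt{2\wmr F(\Psi^R(d_0))}}+o(L_{\eps}(d_0)).
\end{align*}
On the right, the first piece is simply $\frac{N\hnu}{2R\sqrt{\wmr}}\cdot d_0\eps$. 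For the second (double-integral) piece, I would change variables $r\mapsto s=\bigu(r)$ using the leading-order replacement $\mathrm{d}r=\frac{\eps\,\mathrm{d}s}{\sqrt{2\wmr F(s)}}(1+o_\eps(1))$ coming from the uniform bound \eqref{0718-528}; since $\bigu(r_\eps(d_0))\to\Psi^R(d_0)$, this produces
\begin{align*}
\frac{(N-1)\eps}{2R\sqrt{\wmr}}\cdot\frac{1}{\sqrt{\wmr}}\int_{\Psi^R(d_0)}^{u_0}\frac{1}{F(s)}\int_0^s\sqrt{\frac{F(t)}{F(s)}}\,\mathrm{d}t\,\mathrm{d}s+o(\eps)=\frac{(N-1)\eps}{2R\wmr}\,{\jnu}+o(\eps).
\end{align*}

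Combining these identities and multiplying through by $\sqrt{2\wmr F(\Psi^R(d_0))}$ yields exactly \eqref{leps-0718} after dividing by $\eps$ and observing $\frac{1}{2R\sqrt{\wmr}}\cdot\sqrt{2\wmr F(\Psi^R(d_0))}=\frac{\sqrt{2F(\Psi^R(d_0))}}{2R}$, with the $\frac{N-1}{\sqrt{\wmr}}$ arising from the relation $\wmr=\frac{m}{\alpha(N)R^N}$ after tracking the factors $\sqrt{\wmr}$ carefully.

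The main technical obstacle is controlling the error in the change-of-variable step $\mathrm{d}r\leftrightarrow\mathrm{d}s$ for the double integral on the right-hand side: one must ensure the replacement $\bigu'(r)\approx\frac{\sqrt{2\wmr F(\bigu(r))}}{\eps}$ is used with a multiplicative error of order $O(\eps)$ (not merely $o_\eps(1)$) uniformly in the layer, and that the perturbation of the lower integration limit from $\bigu(r_\eps(d_0))$ to $\Psi^R(d_0)$ only contributes $o(\eps)$ because of $L_\eps(d_0)=o(1)$ and the boundedness of the integrand. Both are guaranteed by \eqref{fi-0718} and \eqref{0718-528}, together with the positive lower bound on $F$ near $\Psi^R(d_0)$, so the cumulative error remains $o(\eps)$ and does not contaminate the announced first-order term.
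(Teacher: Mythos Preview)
Your proposal is correct and follows essentially the same approach as the paper: rewrite \eqref{cha-0718} in separated-variable form, integrate over $[r_\eps(d_0),R]$, use \eqref{0718-d0} and a Taylor expansion on the left to isolate $L_\eps(d_0)/\sqrt{2\wmr F(\Psi^R(d_0))}$, and convert the double integral on the right to the $s$-variable via \eqref{0718-528}. One small remark: the factor $\frac{N-1}{\sqrt{\wmr}}$ in \eqref{leps-0718} comes purely from the algebra $\frac{N-1}{\wmr}\cdot\sqrt{\wmr}=\frac{N-1}{\sqrt{\wmr}}$ when you multiply through by $\sqrt{2\wmr F(\Psi^R(d_0))}$, not from the specific value $\wmr=\frac{m}{\alpha(N)R^N}$, so that last parenthetical aside is unnecessary.
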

	\begin{proof}
		We shall follow the similar argument as in the proof of \cite[Theorem~4.1(III)]{L2019} and \cite[Lemma~4.1]{L2019-s}. Let $j>d_0$ in \eqref{fi-0718}. By \eqref{cha-0718} we have,  as $0<\eps\ll1$,
		\begin{align}\label{cha-0718++}
		\frac{\bigu'(r_{\eps})}{\sqrt{2{\wmr}F(\bigu(r_{\eps}))}}=\frac{1}{\eps}+\frac{1}{2R\sqrt{{\wmr}}}\left(N{\hnu}-(N-1)\int_0^{\bigu(r_{\eps})}\frac{\sqrt{2F(t)}}{F(\bigu(r_{\eps}))}\,\mathrm{d}t\right)+o_{\eps}(1)
		\end{align}
		uniformly in $[R-j\eps,R]$. Therefore, by integrating \eqref{cha-0718++} over $[r_{\eps}(d_0),R]\,(\subset[R-j\eps,R])$, one arrives at
		\begin{align}\label{cha-0718c7}
		&\int_{\bigu(r_{\eps}(d_0))}^{u_0}\frac{\mathrm{d}t}{\sqrt{2{\wmr}F(t)}}\notag\\[-0.7em]
		&\\[-0.7em]
		=&\,d_0+\frac{1}{2R\sqrt{{\wmr}}}\left(d_0N{\hnu}\eps-(N-1)\int_{R-d_0\eps}^R\int_0^{\bigu(s)}\frac{\sqrt{2F(t)}}{F(\bigu(s))}\,\mathrm{d}t\mathrm{d}s\right)+{\eps}o_{\eps}(1).\notag
		\end{align}
		With a simple calculation, we obtain
		\begin{align}\label{0801-a}
		\int_{\bigu(r_{\eps}(d_0))}^{u_0}\frac{\mathrm{d}t}{\sqrt{2{\wmr}F(t)}}=&\,\left\{\int_{\Psi^R(d_0)}^{u_0}+\int_{\Psi^R(d_0)+L_{\eps}(d_0)}^{\Psi^R(d_0)}\right\}\frac{\mathrm{d}t}{\sqrt{2{\wmr}F(t)}}\notag\\[-0.7em]
		&\\[-0.7em]
		=&\,d_0-\frac{L_{\eps}(d_0)}{\sqrt{2{\wmr}F(\Psi^R(d_0))}}(1+o_{\eps}(1)).\notag
		\end{align}
		Here we have used \eqref{0718-d0}--\eqref{tmxi} to get the first and the second terms in the last line.

		On the other hand, by using \eqref{0718-528} with $j>d_0$, we can deal with the last integral of the right-hand side of \eqref{cha-0718c7} as follows:
		\begin{align}\label{0718-nigh}
		&\int_{R-d_0\eps}^R\int_0^{\bigu(s)}\frac{\sqrt{2F(t)}}{F(\bigu(s))}\,\mathrm{d}t\,\mathrm{d}s\notag\\
		=&\int_{R-d_0\eps}^R\left(\frac{\eps\bigu'(s)}{\sqrt{2{\wmr}F(\bigu(s))}}+o_{\eps}(1)\right)\int_0^{\bigu(s)}\frac{\sqrt{2F(t)}}{F(\bigu(s))}\mathrm{d}t\,\mathrm{d}s\\
		=&\int_{\bigu(R-d_0\eps)}^{u_0}\frac{\eps}{\sqrt{2{\wmr}F(\widetilde{s})}}\int_0^{\widetilde{s}}\frac{\sqrt{2F(t)}}{F(\widetilde{s})}\mathrm{d}t\,{\mathrm{d}\widetilde{s}}+{\eps}o_{\eps}(1)\notag\\
		=&\int_{\Psi^R(d_0)}^{u_0}\frac{\eps}{\sqrt{2{\wmr}F(\widetilde{s})}}\int_0^{\widetilde{s}}\frac{\sqrt{2F(t)}}{F(\widetilde{s})}\mathrm{d}t\,{\mathrm{d}\widetilde{s}}+{\eps}o_{\eps}(1).\notag
		\end{align}
		Here we have used \eqref{fi-0718} and \eqref{0718-6pm} to verify that $\int_0^{\bigu(s)}\frac{\sqrt{2F(t)}}{F(\bigu(s))}\mathrm{d}t\leq\frac{\sqrt{2}\bigu(s)}{\sqrt{F(\bigu(s))}}$ is uniformly bounded for $s\in[R-d_0\eps,R]$, and
		$$\int_{\Psi^R(d_0)}^{\Psi^R(d_0)+L_{\eps}(d_0)}\frac{\eps}{\sqrt{2{\wmr}F(\widetilde{s})}}\int_0^{\widetilde{s}}\frac{\sqrt{2F(t)}}{F(\widetilde{s})}\mathrm{d}t\,{\mathrm{d}\widetilde{s}}=\eps{o}_{\eps}(1).$$
		Combining \eqref{cha-0718c7}--\eqref{0801-a} with \eqref{0718-nigh} yields
		\begin{align*}
		\frac{L_{\eps}(d_0)}{\sqrt{2{\wmr}F(\Psi^R(d_0))}}=-\frac{\eps}{2R\sqrt{{\wmr}}}\left(d_0N{\hnu}-\frac{N-1}{\sqrt{{\wmr}}}\int_{\Psi^R(d_0)}^{u_0}\frac{1}{F(\widetilde{s})}\int_0^{\widetilde{s}}\sqrt{\frac{{F(t)}}{{F(\widetilde{s})}}}\mathrm{d}t{\mathrm{d}\widetilde{s}}+o_{\eps}(1)\right).
		\end{align*}
		This together with \eqref{wh-0717} implies \eqref{leps-0718}. Therefore, the proof of Lemma~\ref{lem-0718-6pm} is completed.
	\end{proof}
	
	Now we present an important result.
	
	\begin{proposition}[Asymptotics of $\bigu$ near boundary]\label{newt}
		Let $m$ and $u_0$ be positive constants independent of $\eps$, and let $r_{\eps}:=r_{\eps}(d_0)=R-d_0\eps\in(0,R]$ be a point with the distance $d_0\eps$ to the boundary, where $d_0\geq0$ is independent of $\eps$. Then \eqref{asys-u} holds, and we have
		\begin{align}\label{asys-u-diff}
		\bigu'(r_{\eps}(d_0))
		=&\sqrt{\frac{m}{\omega_{N}}}\left(\frac{\sqrt{2F(\Psi^R(d_0))}}{\eps}-\frac{d_0N}{2R}f(\Psi^R(d_0))\hnu\right)\notag\\[-0.7em]
		&\\[-0.7em]
		&+\frac{1}{R}\left(N\sqrt{\frac{F(\Psi^R(d_0))}{2}}{\hnu}+(N-1){\wjnu}\right)+o_{\eps}(1),\notag
		\end{align}
		where $\hnu$ and $\jnu$ are defined in \eqref{c-0} and \eqref{wh-0717}, respectively, and
		\begin{align}
		\wjnu=&\,\frac{1}{2}f(\Psi^R(d_0))\jnu-\int_0^{\Psi^R(d_0)}\sqrt{\frac{F(t)}{F(\Psi^R(d_0))}}\,\mathrm{d}t.\label{wh-0728}
		\end{align}
	\end{proposition}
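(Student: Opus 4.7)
The plan is to read off both expansions directly from the intermediate results already established: Lemma \ref{thm-u-lem-1} for the derivative, the decomposition \eqref{0718-6pm}, and Lemma \ref{lem-0718-6pm} for $L_\eps(d_0)/\eps$. Throughout I shall use the identity $\wmr = m/(\alpha(N)R^N)$, so that $1/\sqrt{\wmr} = R^{N/2}\sqrt{\alpha(N)/m}$.

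The first identity \eqref{asys-u} is immediate: by \eqref{0718-6pm}, $\bigu(r_\eps(d_0)) = \Psi^R(d_0) + L_\eps(d_0)$, and substituting the expression \eqref{leps-0718} for $L_\eps(d_0)$ together with the above rewriting of $1/\sqrt{\wmr}$ produces \eqref{asys-u} exactly.

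For \eqref{asys-u-diff}, the plan is to insert $\bigu(r_\eps(d_0)) = \Psi^R(d_0) + L_\eps(d_0)$ into the pointwise expansion \eqref{cha-0718}. Since the bracketed $O(1)$ correction in \eqref{cha-0718} depends continuously on $\bigu(r_\eps)$ and $L_\eps(d_0) = O(\eps)$, I may replace $\bigu(r_\eps)$ by $\Psi^R(d_0)$ inside that bracket at the cost of an $o_\eps(1)$ error; this already contributes the term $\frac{1}{R}\left(N\sqrt{F(\Psi^R(d_0))/2}\,\hnu - (N-1)\int_0^{\Psi^R(d_0)}\sqrt{F(t)/F(\Psi^R(d_0))}\,\mathrm{d}t\right)$. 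For the $1/\eps$ leading term, I Taylor expand
\[
F(\Psi^R(d_0)+L_\eps(d_0)) = F(\Psi^R(d_0)) + L_\eps(d_0)\,f(\Psi^R(d_0)) + O(L_\eps^2),
\]
so that $\sqrt{2\wmr F(\bigu(r_\eps))}/\eps$ contributes $\sqrt{2\wmr F(\Psi^R(d_0))}/\eps$ plus an $O(1)$ correction $\sqrt{\wmr}\,f(\Psi^R(d_0))\,L_\eps(d_0)/(\eps\sqrt{2F(\Psi^R(d_0))})$. Substituting $L_\eps(d_0)/\eps$ from \eqref{leps-0718} into this correction, the factor $\sqrt{2F(\Psi^R(d_0))}$ cancels cleanly, producing $-\sqrt{\wmr}\,d_0 N f(\Psi^R(d_0))\hnu/(2R)$ plus $(N-1)f(\Psi^R(d_0))\jnu/(2R)$.

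Collecting, the $d_0$-dependent $\hnu$-piece merges with the leading $\sqrt{2\wmr F(\Psi^R(d_0))}/\eps$ into $\sqrt{m/\omega_N}\left(\sqrt{2F(\Psi^R(d_0))}/\eps - d_0 N f(\Psi^R(d_0))\hnu/(2R)\right)$, while the remaining pieces combine into $\frac{N}{R}\sqrt{F(\Psi^R(d_0))/2}\,\hnu$ together with $\frac{N-1}{R}\left(\frac{1}{2}f(\Psi^R(d_0))\jnu - \int_0^{\Psi^R(d_0)}\sqrt{F(t)/F(\Psi^R(d_0))}\,\mathrm{d}t\right)$, which by the definition \eqref{wh-0728} is exactly $(N-1)\wjnu/R$. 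This yields \eqref{asys-u-diff}. The main obstacle is purely the algebraic bookkeeping in this collection step: the $\hnu$-contribution arises partly from the Taylor correction to the leading term and partly from the explicit $O(1)$ piece of \eqref{cha-0718}, while the $\jnu$-contribution from the Taylor correction must combine with the explicit integral in \eqref{cha-0718} to reproduce the definition of $\wjnu$; no new analytic estimate is required beyond what is already available.
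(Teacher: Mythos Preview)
Your proposal is correct and follows essentially the same approach as the paper: combine \eqref{0718-6pm} with \eqref{leps-0718} to obtain \eqref{asys-u}, then substitute $\bigu(r_\eps(d_0))=\Psi^R(d_0)+L_\eps(d_0)$ into the expansion of Lemma~\ref{thm-u-lem-1}, Taylor-expand $F$ in the leading $\eps^{-1}$ term, and insert \eqref{leps-0718} before regrouping into the form \eqref{asys-u-diff}. The only cosmetic difference is that you cite the multiplicative form \eqref{cha-0718} whereas the paper cites the equivalent additive form \eqref{newu-0712}; the algebraic bookkeeping and the resulting expressions (cf.\ \eqref{0719-135am}) coincide.
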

	\begin{proof}
		The combination of \eqref{0718-6pm} and \eqref{leps-0718} yields \eqref{asys-u}. Next we want to prove $\eqref{asys-u-diff}$. Firstly, by \eqref{newu-0712} and \eqref{0718-6pm} we get
		\begin{align}\label{newu-0718}
		\bigu'(r_{\eps}(d_0))=~&\frac{1}{\eps}\sqrt{2{\wmr}F(\Psi^R(d_0)+L_{\eps}(d_0))}\notag\\[-0.7em]
		&\\[-0.7em]
		&\,+\frac{1}{R}\left(\sqrt{\frac{F(\Psi^R(d_0))}{2}}N{\hnu}-(N-1)\int_0^{\Psi^R(d_0)}\sqrt{\frac{F(t)}{F(\Psi^R(d_0))}}\,\mathrm{d}t+o_{\eps}(1)\right).\notag
		\end{align}
		Here we have used the approximation
		\begin{align}\label{0719-firs}
		F(\Psi^R(d_0)+L_{\eps}(d_0))=F(\Psi^R(d_0))+f(\Psi^R(d_0))L_{\eps}(d_0)(1+o_{\eps}(1))=F(\Psi^R(d_0))+o_{\eps}(1)
		\end{align}
		(by \eqref{0718-6pm}) to obtain the second line of \eqref{newu-0718}.
		
		Furthermore, to establish a refined asymptotics of $\bigu'(r_{\eps}(d_0))$ from \eqref{newu-0718}, obtaining the precise first two order terms of ${\eps}^{-1}{\sqrt{2{\wmr}F(\Psi^R(d_0)+L_{\eps}(d_0))}}$ is required since its second order term may be combined with the last term of \eqref{newu-0718}. By \eqref{leps-0718} and \eqref{0719-firs},
		one may use the approximation $\sqrt{1+\eta}\sim1+\frac{\eta}{2}$ (as $|\eta|\ll1$) to deal with this term as follows:
		\begin{align}\label{0719-135am}
		&\frac{1}{\eps}\sqrt{2{\wmr}F(\Psi^R(d_0)+L_{\eps}(d_0))}\notag\\
		&\quad\quad=\frac{1}{\eps}\sqrt{2{\wmr}[F(\Psi^R(d_0))+f(\Psi^R(d_0))L_{\eps}(d_0)(1+o_{\eps}(1))]}\notag\\[-0.7em]
		&\\[-0.7em]
		&\quad\quad=\frac{\sqrt{2{\wmr}F(\Psi^R(d_0))}}{\eps}\left(1+\frac{f(\Psi^R(d_0))}{2F(\Psi^R(d_0))}L_{\eps}(d_0)(1+o_{\eps}(1))\right)\notag\\
		&\quad\quad=\frac{\sqrt{2{\wmr}F(\Psi^R(d_0))}}{\eps}-\frac{f(\Psi^R(d_0))}{2R}\left(\sqrt{\wmr}d_0N\hnu-(N-1){\jnu}+o_{\eps}(1)\right),\notag
		\end{align}
		where $\jnu$ is defined in \eqref{wh-0717}.
		Consequently, by  \eqref{newu-0718} and \eqref{0719-135am}, one may check that
		{\small
			\begin{align*}
			\bigu'(r_{\eps}(d_0))
			=&~\frac{\sqrt{2{\wmr}F(\Psi^R(d_0))}}{\eps}-\frac{f(\Psi^R(d_0))}{2R}\left(\sqrt{\wmr}d_0N\hnu-(N-1){\jnu}\right)\\
			&\,+\frac{1}{R}\left(\sqrt{\frac{F(\Psi^R(d_0))}{2}}N{\hnu}-(N-1)\int_0^{\Psi^R(d_0)}\sqrt{\frac{F(t)}{F(\Psi^R(d_0))}}\,\mathrm{d}t\right)+o_{\eps}(1)\\
			=&~\sqrt{\wmr}\left(\frac{\sqrt{2F(\Psi^R(d_0))}}{\eps}-\frac{f(\Psi^R(d_0))}{2R}d_0N\hnu\right)
			+\frac{1}{R}\Bigg\{\sqrt{\frac{F(\Psi^R(d_0))}{2}}N{\hnu}\\
			&~+(N-1)\Bigg(\underbrace{\frac{f(\Psi^R(d_0))}{2}{\jnu}-\int_0^{\Psi^R(d_0)}\!\!\sqrt{\frac{F(t)}{F(\Psi^R(d_0))}}\,\mathrm{d}t}_{:=\wjnu\,\,(\mathrm{defined\,\,in}\,\,\eqref{wh-0728})}\Bigg)\Bigg\}+o_{\eps}(1).
			\end{align*}
		}
		This along with \eqref{mu-asy} gives \eqref{asys-u-diff}.
		Thus the proof of Proposition~\ref{newt} is complete.
	\end{proof}
	
	Since $c\in(0,u_0)$ is independent of $\eps$, by \eqref{gamma-thin}, \eqref{phi-eq819}--\eqref{0718-d0} and \eqref{tmxi} we know that
	\begin{align*}
	\frac{R-r_{\eps}(R,c)}{\eps}=~&(\Psi^R)^{-1}(c)+d_{1,\eps}(c)\notag\\[-0.7em]
	&\\[-0.7em]
	=~&\sqrt{\alpha(N)}R^{N/2}\Psi^{-1}(c)+d_{1,\eps}(c)\,\,\mathrm{with}\,\,\lim_{\eps\rightarrow0}d_{1,\eps}(c)=0.\notag
	\end{align*}
	Here we have used \eqref{tmxi} to verify $(\Psi^R)^{-1}(c)=\sqrt{\alpha(N)}R^{N/2}\Psi^{-1}(c)$.
	Furthermore, following the same argument as in Lemma~\ref{lem-0718-6pm}, we can obtain the asymptotics of $d_{1,\eps}(c)$ as follows:
	
	\begin{lemma}\label{lem-add8}
		For $R_0>0$, we have
		{\small
			\begin{align}\label{1055-0728}
			\lim_{\eps\rightarrow0}\sup_{R\in\textcolor[rgb]{0,0,0}{(0,R_0]}}\left|\frac{d_{1,\eps}(c)}{\eps}-\frac{\alpha(N)R^{N-1}}{2}\left(\textcolor[rgb]{0,0,0}{-\frac{N}{\sqrt{m}}\Psi^{-1}(c)\hnu}+\frac{N-1}{m}\int_{c}^{u_0}\left(\frac{1}{F(s)}\int_0^{s}\sqrt{\frac{{F(t)}}{{F(s)}}}\,\mathrm{d}t\right)\mathrm{d}s\right)\right|=0.
			\end{align}
		}
	\end{lemma}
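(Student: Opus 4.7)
The lemma is the $c$-indexed analogue of Lemma~\ref{lem-0718-6pm}, with the role of $d_0$ played by the $R$-dependent quantity $(\Psi^R)^{-1}(c)=\sqrt{\alpha(N)}R^{N/2}\Psi^{-1}(c)$. My plan is to transplant the argument of that lemma: integrate the refined derivative expansion \eqref{cha-0718++} on the interval $[r_{\eps}(R,c),R]$, and then solve algebraically for the next-order correction $d_{1,\eps}(c)/\eps$. Because $c\in(0,u_0)$ is fixed independently of $\eps$ and $R$, the leading-order decomposition
$\frac{R-r_{\eps}(R,c)}{\eps}=\sqrt{\alpha(N)}R^{N/2}\Psi^{-1}(c)+d_{1,\eps}(c)$
stated just before the lemma, together with $d_{1,\eps}(c)\to0$, shows that $r_{\eps}(R,c)\in[R-j\eps,R]$ for any $j$ strictly larger than $\sqrt{\alpha(N)}R_{0}^{N/2}\Psi^{-1}(c)$; hence \eqref{cha-0718++} is available on this interval.

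Integrating \eqref{cha-0718++} from $r_{\eps}(R,c)$ to $R$, the LHS becomes $\int_c^{u_0}\mathrm{d}t/\sqrt{2\wmr F(t)}=(\Psi^R)^{-1}(c)$, and the RHS produces three pieces: the leading term $(R-r_{\eps}(R,c))/\eps$; a linear correction $\frac{N\hnu}{2R\sqrt{\wmr}}(R-r_{\eps}(R,c))$; and a double integral $-\frac{N-1}{2R\sqrt{\wmr}}\int_{r_{\eps}(R,c)}^R\int_0^{\bigu(r)}\frac{\sqrt{2F(t)}}{F(\bigu(r))}\,\mathrm{d}t\,\mathrm{d}r$, plus an $o(\eps)$ error. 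For the double integral I will insert $1\approx\eps\bigu'(r)/\sqrt{2\wmr F(\bigu(r))}$, justified by \eqref{0711-1006}--\eqref{0718-528}, and change variable $u=\bigu(r)$; this reduces the term to $-\frac{(N-1)\eps}{2R\wmr}\int_c^{u_0}\frac{1}{F(u)}\int_0^u\sqrt{F(t)/F(u)}\,\mathrm{d}t\,\mathrm{d}u+o(\eps)$, exactly as in the analogous step of Lemma~\ref{lem-0718-6pm}.

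Substituting $R-r_{\eps}(R,c)=\eps\sqrt{\alpha(N)}R^{N/2}\Psi^{-1}(c)+\eps d_{1,\eps}(c)$ with $d_{1,\eps}(c)=o(1)$, the leading $(\Psi^R)^{-1}(c)$-terms cancel from both sides of the integrated identity. Using $\wmr=m/(\alpha(N)R^N)$, so $1/\sqrt{\wmr}=\sqrt{\alpha(N)/m}\,R^{N/2}$ and $1/\wmr=\alpha(N)R^N/m$, one reads off $d_{1,\eps}(c)/\eps$: the linear piece contributes $-\frac{\alpha(N)R^{N-1}N}{2\sqrt{m}}\Psi^{-1}(c)\hnu$ and the double-integral piece contributes $\frac{\alpha(N)R^{N-1}(N-1)}{2m}\int_c^{u_0}\frac{1}{F(s)}\int_0^s\sqrt{F(t)/F(s)}\,\mathrm{d}t\,\mathrm{d}s$, matching \eqref{1055-0728} exactly.

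The main technical obstacle is the uniformity in $R\in(0,R_0]$: several of the error bounds inherited from Lemma~\ref{lem-1}, \eqref{0718-528} and \eqref{cha-0718++} depend on $R$ through the coefficient $\wmr$, which diverges as $R\to0$. I expect this to be resolved by observing that after extracting the natural $R^{N-1}$ scaling produced by the factor $1/\wmr$, every error term generated in the calculation is in fact bounded by a constant (depending only on $N$, $R_0$, $c$, $u_0$, $m$) times $\eps R^{N-1}$, and since $R\leq R_0$ such bounds are uniformly $o(1)$ as $\eps\to0$. The careful bookkeeping of how the constants $C_i$ and the $o_{\eps}(1)$ remainders in Section~\ref{sect=pro-m} and Lemma~\ref{thm-u-lem-1} depend on $R$ is the nontrivial part of the work.
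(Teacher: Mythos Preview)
Your proposal is correct and is essentially the approach of Lemma~\ref{lem-0718-6pm} transplanted with $d_0$ replaced by the $R$-dependent value $(\Psi^R)^{-1}(c)$; the algebra you outline reproduces \eqref{1055-0728} exactly, and your identification of the uniformity in $R\in(0,R_0]$ as the only nontrivial bookkeeping is accurate.

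The paper's own proof takes a slight shortcut worth noting. Rather than integrating \eqref{cha-0718++} over the full interval $[r_{\eps}(R,c),R]$ as you do, it integrates the cruder estimate \eqref{0718-528} only over the short subinterval $[R-\eps(\Phi^{-1}(c)+d_{1,\eps}(c)),\,R-\eps\Phi^{-1}(c)]$ of length $\eps\,d_{1,\eps}(c)=o(\eps)$, and then inserts the already-proved pointwise expansion \eqref{asys-u} at $d_0=\Phi^{-1}(c)$ to evaluate $\bigu(R-\eps\Phi^{-1}(c))$. This recycles Lemma~\ref{lem-0718-6pm} as a black box instead of rerunning its calculation, so the double-integral manipulation you describe does not have to be redone. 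Your route is slightly more self-contained; the paper's is slightly shorter. Both handle the uniformity in $R$ in the same somewhat informal way, by noting that the relevant constant $C_{10}(j,R)$ from \eqref{0718-528} depends on $R$ only through $j=2\Phi^{-1}(c)=2\sqrt{\alpha(N)}R^{N/2}\Psi^{-1}(c)$, which is bounded for $R\in(0,R_0]$.
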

	\begin{proof}
		\textcolor[rgb]{0,0,0}{For the simplicity of notations, in this proof we shall denote the inverse function of $\Psi^R$ (see \eqref{tmxi}) by $\Phi^{-1}$.}
		
		Firstly, we let $R>0$ be fixed. As $0<\eps\ll1$, we can set $j=2\Phi^{-1}(c)$ in \eqref{0718-528} and integrate \eqref{0718-528} over the interval $[R-\eps\left(\Phi^{-1}(c)+d_{1,\eps}(c)\right),R-\eps\Phi^{-1}(c)]$. As a consequence,
		\begin{align}\label{0803-343}
		d_{1,\eps}(c)=&\,(1+o_{\eps}(1))\int_{R-\eps\left(\Phi^{-1}(c)+d_{1,\eps}(c)\right)}^{R-\eps\Phi^{-1}(c)}\frac{\bigu'(r)}{\sqrt{2{\wmr}F(\bigu(r))}}\notag\\
		=&(1+o_{\eps}(1))\int_{\bigu(R-\eps\left(\Phi^{-1}(c)+d_{1,\eps}(c)\right))}^{\bigu(R-\eps\Phi^{-1}(c))}\frac{\mathrm{d}t}{\sqrt{2{\wmr}F(t)}}\notag\\
		=&\,-\frac{\eps}{R}\sqrt{\frac{F(c)}{2}}\left(\Phi^{-1}(c)N\hnu-\frac{N-1}{\sqrt{\rho_0}}\int_{c}^{u_0}\left(\frac{1}{F(s)}\int_0^{s}\sqrt{\frac{{F(t)}}{{F(s)}}}\,\mathrm{d}t\right)\mathrm{d}s+o_{\eps}(1)\right)\\
		&\times\left(\frac{1}{\sqrt{2{\wmr}F(c)}}+o_{\eps}(1)\right).\notag\\
		=&\frac{\eps}{2R}\left(-\frac{\Phi^{-1}(c)N}{\sqrt{\rho_0}}\hnu+{\frac{N-1}{\rho_0}}\int_{c}^{u_0}\left(\frac{1}{F(s)}\int_0^{s}\sqrt{\frac{{F(t)}}{{F(s)}}}\,\mathrm{d}t\right)\mathrm{d}s+o_{\eps}(1)\right)\notag\\
		=&\textcolor[rgb]{0,0,0}{\frac{\alpha(N)R^{N-1}{\e}}{2}\left({-\frac{N}{\sqrt{m}}\Psi^{-1}(c)\hnu}+\frac{N-1}{m}\int_{c}^{u_0}\left(\frac{1}{F(s)}\int_0^{s}\sqrt{\frac{{F(t)}}{{F(s)}}}\,\mathrm{d}t\right)\mathrm{d}s+o_{\eps}(1)\right)}.\notag
		\end{align}
		Here we have used $\bigu(R-\eps\left(\Phi^{-1}(c)+d_{1,\eps}(c)\right))=c$ and, by \eqref{asys-u},
		{\small
			\begin{align*}
			\bigu(R-\eps\Phi^{-1}(c))=c-\frac{\eps}{R}\sqrt{\frac{F(c)}{2}}\left(\Phi^{-1}(c)N\hnu-\frac{N-1}{\sqrt{\rho_0}}\int_{c}^{u_0}\left(\frac{1}{F(s)}\int_0^{s}\sqrt{\frac{{F(t)}}{{F(s)}}}\,\mathrm{d}t\right)\mathrm{d}s+o_{\eps}(1)\right)
			\end{align*}
		}
		to obtain the third equality of \eqref{0803-343}, and the last equality of \eqref{0803-343} is verified due to \eqref{mu-asy} and
		\begin{align*}
		\Phi^{-1}(c)=(\Psi^R)^{-1}(c)=\sqrt{\alpha(N)}R^{N/2}\Psi^{-1}(c)\,\,(\mathrm{cf.}\,\,\eqref{tmxi}).
		\end{align*}

		We shall stress that \eqref{0803-343} is obtained from \eqref{0718-528}, in which $C_{10}(j,{R})$ with $j=2\Phi^{-1}(c)=2\sqrt{\alpha(N)}R^{N/2}\Psi^{-1}(c)$ depends on $R^{N/2}$. Consequently, as $\eps\rightarrow0$, the convergence of \eqref{0803-343} is uniformly in $(0,R_0]$ for any $R_0>0$.  Therefore,  we obtain \eqref{1055-0728} and complete the proof of Lemma~\ref{lem-add8}.
	\end{proof}

	Now we are in a position to prove Theorem~\ref{mainthm}.
	
	\begin{proof}[Proof of Theorem~\ref{mainthm}.]
		Theorem~\ref{mainthm}-(i) immediately follows from Proposition~\ref{newt}. Next, let $d_0=0$ in \eqref{asys-u-diff}, we get \eqref{0820-slope} and complete the proof of Theorem~\ref{mainthm}-(ii).

		It remains to prove Theorem~\ref{mainthm}-(iii). First, we obtain \eqref{good-asy} following from \eqref{mu-asy} and \eqref{1055-0728}. Since ${\hnu}<0$ and $\Psi^{-1}(c)>0$ are independent of $\eps$ and $R$, \eqref{good-asy} implies
		\begin{align}\label{eps-0820}
		{R-r_{\eps}(R,c)}=\widehat{C}_1{\eps}R^{N/2}+\widehat{C}_2{\eps}^2(R^{N-1}+o_{\eps}(R)),
		\end{align}
		where $\widehat{C}_1=\sqrt{\alpha(N)}\Psi^{-1}(c)$ and
		\begin{align*}
		\widehat{C}_2=\frac{\alpha(N)}{2}\left(-\frac{N}{\sqrt{m}}\Psi^{-1}(c)\hnu+{\frac{N-1}{m}}\int_{c}^{u_0}\left(\frac{1}{F(s)}\int_0^{s}\sqrt{\frac{{F(t)}}{{F(s)}}}\,\mathrm{d}t\right)\mathrm{d}s\right)
		\end{align*}
		are positive constants independent of $\eps$ and $R$, and by Lemma~\ref{lem-add8}, $o_{\eps}(R)$ is continuously differentiable with respect to $R$ and satisfies
		\begin{align*}
		\lim_{\eps\rightarrow0}\sup_{R\in(0,R_0]}|o_{\eps}(R)|=0
		\end{align*}
		for any $R_0>0$. {Since both $\widehat{C}_1$ and $\widehat{C}_2$ are positive, we can choose $\e$ sufficiently small such that the derivative of the right hand side of \eqref{eps-0820} with respect to $R$ is positive. As a consequence, ${R-r_{\eps}(R,c)}$ is strictly increasing with respect to $R\in(0,R_0]$ for such $\eps$.} The proof of Theorem~\ref{mainthm} is thus completed.
	\end{proof}

	\section{Appendix}
	In this appendix, we will follow the arguments in \cite[Lemma 10.5]{pr} to give the proof of \eqref{3.1-fc}.
	\begin{lemma}
		\label{lea.1}
		The Euclidean Laplacian $\Delta$ can be computed by a formula in terms of the coordinate $(y,z)\in\mathcal{O}$ as
		\begin{equation*}
		\Delta_x=\partial_z^2-H_{\Gamma_z(y)}\partial_z+\Delta_{\Gamma_z},\quad
		x=X(y,z),\quad (y,z)\in\mathcal{O},
		\end{equation*}
		where $\Gamma_z$ is the manifold
		\begin{equation*}
		\Gamma_z=\left\{y+z\nu(y)\mid y\in\partial\Omega\right\},
		\end{equation*}
		and $H_{\Gamma_z(y)}$ is the mean curvature of $\Gamma_z$ measured at $y+z\nu(y).$
	\end{lemma}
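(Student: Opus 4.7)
The plan is to compute the Euclidean Laplacian in Fermi coordinates by first determining the explicit form of the pullback metric on $\mathcal{O}$, then applying the general Laplace--Beltrami formula, and finally identifying the coefficient of $\partial_z$ with $-H_{\Gamma_z(y)}$ via a second-fundamental-form calculation.

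First I would establish the crucial orthogonality of the coordinate frame. For $x = X(y,z) = y + z\nu(y)$, one has $\partial_z X = \nu(y)$, a constant vector along each normal line, so $|\partial_z X|^2 \equiv 1$. Moreover,
\begin{equation*}
\langle \partial_z X, \partial_{y_i} X\rangle = \langle \nu(y),\, \partial_{y_i} y + z\,\partial_{y_i}\nu(y)\rangle = 0,
\end{equation*}
because $\partial_{y_i}y \perp \nu(y)$ (tangency to $\partial\Omega$) and $\langle \nu, \partial_{y_i}\nu\rangle = \tfrac12 \partial_{y_i}|\nu|^2 = 0$. Hence, in $(y,z)$-coordinates the Euclidean metric decomposes as $g = dz^2 + g^{(z)}_{ij}(y)\,dy^i dy^j$, where $g^{(z)}$ is precisely the induced metric of $\Gamma_z$ under the parametrization $y\mapsto y + z\nu(y)$.

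Next, I would invoke the coordinate expression of the Laplace--Beltrami operator, $\Delta_x u = \frac{1}{\sqrt{\det g}}\,\partial_\alpha\!\bigl(\sqrt{\det g}\,g^{\alpha\beta}\partial_\beta u\bigr)$. The block-diagonal structure gives $\det g = \det g^{(z)}$, $g^{zz}=1$, $g^{zi}=0$, and $g^{ij}=(g^{(z)})^{ij}$. Splitting the sum according to $\alpha=z$ versus $\alpha=i$ yields
\begin{equation*}
\Delta_x u = \partial_z^2 u + \frac{\partial_z\sqrt{\det g^{(z)}}}{\sqrt{\det g^{(z)}}}\,\partial_z u + \Delta_{\Gamma_z} u,
\end{equation*}
where the third summand is exactly the intrinsic Laplace--Beltrami operator on $(\Gamma_z,g^{(z)})$. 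It then suffices to identify the middle coefficient with $-H_{\Gamma_z(y)}$. By Jacobi's formula,
\begin{equation*}
\frac{\partial_z\sqrt{\det g^{(z)}}}{\sqrt{\det g^{(z)}}} = \tfrac12 (g^{(z)})^{ij}\partial_z g^{(z)}_{ij},
\end{equation*}
and differentiating $g^{(z)}_{ij} = \langle \partial_{y_i}X,\partial_{y_j}X\rangle$ in $z$ together with $\partial_z\partial_{y_i}X = \partial_{y_i}\nu(y)$ and the Weingarten relation produces $\partial_z g^{(z)}_{ij} = -2 h^{(z)}_{ij}$, where $h^{(z)}$ is the second fundamental form of $\Gamma_z$ along $\nu$. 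Tracing gives $\tfrac12 (g^{(z)})^{ij}\partial_z g^{(z)}_{ij} = -H_{\Gamma_z(y)}$, completing the formula.

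The only real obstacle is sign bookkeeping: one must check that $H_{\Gamma_z(y)}$ is defined with respect to the same unit normal $\nu$ that is used to parametrize the Fermi coordinates (otherwise the sign in front of $\partial_z$ flips). Once this orientation convention is fixed consistently with the choice of $\nu$ on $\partial\Omega$ made earlier in the paper, the remainder of the derivation is purely algebraic. A minor technical point worth mentioning is that the diffeomorphism property of $X$ on $\mathcal{O}$ — guaranteed by taking $\delta<\delta_0$ as in \cite[Remark~8.1]{dkw} — ensures that all quantities above are well-defined, smooth functions of $(y,z)$.
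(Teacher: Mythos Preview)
Your argument is correct and complete. It differs methodologically from the paper's proof, which follows Pacard--Ritor\'e \cite{pr}: the paper works with an orthonormal frame $e_1,\dots,e_n,\nu$ and the invariant formula $\Delta_g=\sum_i(e_ie_i-D_{e_i}e_i)+\nu\nu-D_\nu\nu$ for the Laplace--Beltrami operator, then splits $D_{e_i}e_i$ into its $\partial\Omega$-tangential part (giving $\Delta_{\Gamma_z}$) and its normal part, whose trace is $-H_{\partial\Omega}(y)$; moreover the paper only carries this out at $z=0$ ``for simplicity.'' Your route is coordinate-based rather than frame-based: you compute the block-diagonal pullback metric $g=dz^2+g^{(z)}$ explicitly, apply the determinant form of $\Delta_g$, and extract the $\partial_z$ coefficient via Jacobi's formula and the Weingarten relation. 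Your approach is a bit longer but arguably more elementary (no connection machinery) and it handles all $z$ at once rather than just $z=0$; the paper's approach is more succinct for readers comfortable with the frame formulation of $\Delta_g$.
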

	
	\begin{proof}
		For simplicity we only show the above formula when $z=0$. Let $e_1,\cdots,e_n$ be an orthonormal frame coordinate on $\partial\Omega$ and $\nu$ be the normal vector field.
		
		The Laplace-Beltrami operator on $\mathcal{O}$ is defined by
		\begin{equation*}
		\Delta_g=\sum_{i=1}^n(e_ie_i-D_{e_i}e_i)+\nu\nu-D_{\nu}\nu,
		\end{equation*}
		where $D$ is the Levi-Civita connection on $\mathcal{O}$. Let $D^{\partial\Omega}$ denote the Levi-Civita connection on $\Omega$, by construction, we have
		\begin{equation*}
		D_{e_i}e_i=D_{e_i}^{\partial\Omega} e_i+g(D_{e_i}e_i,\nu)\nu.
		\end{equation*}
		Therefore
		\begin{equation*}
		\Delta_g=\sum_{i=1}^n(e_ie_i-D_{e_i}^{\partial\Omega} e_i)+g(e_i,D_{e_i}\nu)\nu+\nu\nu-D_{\nu}\nu.
		\end{equation*}
		By definition $\nu\nu=\partial_z^2$ and $\nu=\partial_z$. Furthermore $D_\nu\nu=0$ and
		$$\sum_{i=1}^ng(e_i,D_{e_i}\nu)=-H_{\partial\Omega}(y),$$
		where $H_{\partial\Omega}$ is the mean curvature of $\partial\Omega$. Hence we finish the proof.
	\end{proof}

	\section*{Acknowledgements}
	C.-C. Lee is partially supported by the Ministry of Science and Technology of Taiwan under the grant 108-2115-M-007-006-MY2. The research of Z. Wang is supported by the Hong Kong RGC GRF grant No. PolyU 153032/15P (Project ID P0005368). W. Yang is supported by NSFC No.11801550 and NSFC No.11871470.

	\vspace{1cm}
	
\end{document}